\xpatchcmd{\@thm}{\fontseries\mddefault\upshape}{}{}{} % same font as thm-header
\newcommand{\subjclass}[2][2020]{%
	\let\@oldtitle\@title%
	\gdef\@title{\@oldtitle\footnotetext{#1 \emph{Mathematics subject classification.} #2}}%
}
\newcommand{\keywords}[1]{%
	\let\@@oldtitle\@title%
	\gdef\@title{\@@oldtitle\footnotetext{\emph{Key words and phrases.} #1.}}%
}
\providecommand{\U}[1]{\protect\rule{.1in}{.1in}}
\newtheorem{theorem}{Theorem}
\newtheorem{definition}[theorem]{Definition}
\newtheorem{lemma}[theorem]{Lemma}
\newtheorem*{question*}{Question}
\newtheorem{proposition}[theorem]{Proposition}
\newtheorem{remark}[theorem]{Remark}
\begin{document}
	\author[a]{Dharmendra Kumar\thanks{dharmendrakumar@uohyd.ac.in}}
	\author[b]{Swarnendu Sil\thanks{swarnendusil@iisc.ac.in}} 
	\affil[a]{School of Mathematics and Statistics\\ University of Hyderabad\\ Hyderabad, India.}
	\affil[b]{Department of Mathematics\\ Indian Institute of Science\\ Bangalore, India. }
	
	\title{BMO estimates for Hodge-Maxwell systems with discontinuous anisotropic coefficients}
	\keywords{Boundary regularity, elliptic system, Campanato method, Hodge Laplacian, Maxwell system, BMO estimate}
	\subjclass[2020]{35J57, 35B65, 35Q60}
	\maketitle 
	
	\begin{abstract}
		We prove up to the boundary $\mathrm{BMO}$ estimates for linear Maxwell-Hodge type systems for $\mathbb{R}^{N}$-valued differential $k$-forms $u$ in $n$ dimensions 
		\begin{align*}
			\left\lbrace \begin{aligned}
				d^\ast \left(  A(x)  du \right)   &= f   &&\text{ in } \Omega,\\
				d^\ast \left( B(x) u\right) &= g &&\text{ in } \Omega, 
			\end{aligned} 
			\right. 
		\end{align*}
		with $ \nu\wedge u$ prescribed on $\partial\Omega,$ where the coefficient tensors $A,B$ are only required to be bounded measurable and in a class of `small multipliers of BMO'. This class neither contains nor is contained in $C^{0}.$ Since the coefficients are allowed to be discontinuous, the usual Korn's freezing trick can not be applied. As an application, we show BMO estimates hold for the time-harmonic Maxwell system in dimension three for a class of discontinuous anisotropic permeability and permittivity tensors. The regularity assumption on the coefficient is essentially sharp. 
	\end{abstract}
	
	\section{Introduction}
	As far as systems of PDEs for a differential $k$-form $u$ on a bounded domain $\Omega \subset \mathbb{R}^{n}$ are concerned, one of the most important first order systems, if not the most, are the so-called Hodge systems 
	\begin{align*}
		\left\lbrace \begin{aligned}
			d u &= f &&\text{ in } \Omega, \\
			d^{\ast} u &=g &&\text{ in } \Omega, \\
			\nu \wedge u &= \nu \wedge u_{0} &&\text{ on } \partial\Omega, 
		\end{aligned}\right. \qquad\text{ and } \qquad  \left\lbrace \begin{aligned}
			d u &= f &&\text{ in } \Omega, \\
			d^{\ast} u &=g &&\text{ in } \Omega, \\
			\nu \lrcorner u &= \nu \lrcorner u_{0} &&\text{ on } \partial\Omega. 
		\end{aligned}\right. 
	\end{align*}
	These systems, which are dual to each other by Hodge duality, are classical and have been studied extensively in a variety of contexts, e.g. Poincar\'{e} lemma, Cauchy-Riemann operators, Dirac operators, Gaffney-Friedrich's inequalities, div-curl lemmas etc,  just to name a few. Elliptic regularity results for these systems can be derived from the Hodge-Morrey decomposition, which itself a consequence of the regularity results for the second order Hodge Laplacian systems ( see \cite{CsatoDacKneuss}, \cite{SchwarzHodge}, \cite{Morrey1966} etc ). Similar remarks are valid for the Hodge-Maxwell systems   
	\begin{align*}
		\left\lbrace \begin{aligned}
			d^\ast du &= f &&\text{ in } \Omega, \\
			d^{\ast} u &=g &&\text{ in } \Omega, \\
			\nu \wedge u &= \nu \wedge u_{0} &&\text{ on } \partial\Omega, 
		\end{aligned}\right. \qquad\text{ and } \qquad  \left\lbrace \begin{aligned}
			d d^{\ast}u &= f &&\text{ in } \Omega, \\
			d u &=g &&\text{ in } \Omega, \\
			\nu \lrcorner u &= \nu \lrcorner u_{0} &&\text{ on } \partial\Omega. 
		\end{aligned}\right. 
	\end{align*}
	However, their natural `rotated' analogue,
	\begin{align}\label{General Maxwell system}
		\left\lbrace \begin{aligned}
			d^\ast \left( A \left(x\right)  du \right) &= f &&\text{ in } \Omega, \\
			d^{\ast} \left( B \left(x\right)  u \right) &=g &&\text{ in } \Omega, \\
			\nu \wedge u &= \nu \wedge u_{0} &&\text{ on } \partial\Omega, 
		\end{aligned}\right. 
	\end{align}
	where $A, B$ are bounded measurable, uniformly elliptic matrices, have received hardly any attention. This is rather surprising, as this is really the system that is relevant for applications. Indeed, one can already see this for the case of time-harmonic Maxwell system itself. The time-harmonic Maxwell's system in bounded domain in $\mathbb{R}^{3}$ can be written as a second order system in $E$ as follows 
	\begin{align*}
		\left\lbrace \begin{aligned}
			\operatorname*{curl} ( \mu^{-1} \operatorname*{curl} E  ) &=  \omega^2 \varepsilon E -i\omega J_{e} + \operatorname*{curl}\left( \mu^{-1} J_{m}\right)    
			&&\text{ in } \Omega, \\
			\operatorname*{div} ( \varepsilon E ) &= \frac{i}{\omega}\operatorname*{div} J_{e} &&\text{ in } \Omega, \\
			\nu \times E &= \nu \times E_{0} &&\text{  on } \partial\Omega,
		\end{aligned} 
		\right. 
	\end{align*}
	which has the same form as \eqref{General Maxwell system}, with $A = \mu^{-1}$ and $B=\varepsilon.$
	
	Regularity results for the system \eqref{General Maxwell system} for a vector-valued differential $k$-form, along with a variety of  related linear systems, are studied systematically in \cite{Sil_linearregularity}, where the matrices $A,B$ are assumed to be sufficiently regular. The techniques employed there is perturbative in nature, i.e. regularity results are derived by using Korn's freezing trick to freeze the coefficients at a point and then deriving the regularity estimates for the case when $A, B$ are constant matrices.   
	
	In the present article, we are interested in deriving $\mathrm{BMO}$ estimates for the system 
	\begin{align}\label{General Maxwell system BMO}
		\left\lbrace \begin{aligned}
			d^\ast \left( A \left(x\right)  du \right) &= d^{\ast}f &&\text{ in } \Omega, \\
			d^{\ast} \left( B \left(x\right)  u \right) &=d^{\ast}g &&\text{ in } \Omega, \\
			\nu \wedge u &= \nu \wedge u_{0} &&\text{ on } \partial\Omega, 
		\end{aligned}\right. 
	\end{align}
	when $A, B$ are allowed to be discontinuous, so that the techniques of \cite{Sil_linearregularity} can no longer be applied. For $0$-forms, the system \eqref{General Maxwell system BMO} reduces to 
	\begin{align*}
		\left\lbrace \begin{aligned}
			\operatorname{div}\left(A\left(x\right)\nabla u\right) &=\operatorname{div} f &&\text{ in } \Omega, \\
			u &=u_{0} &&\text{ on } \partial\Omega.
		\end{aligned}\right. 
	\end{align*} 
	Acquistapace in \cite{Acquistapace_linear_elliptic_discontinuous} derived $\mathrm{BMO}$ estimates for $Du$ assuming $f$ in $\mathrm{BMO}$ when the coefficients of $A$ are in the class $L^{\infty}\cap \mathrm{V}\mathscr{L}_{\left( 1+ \left\lvert \log r \right\rvert \right)^{-1}}^{2,n},$ where $\mathrm{V}\mathscr{L}_{\left( 1+ \left\lvert \log r \right\rvert \right)^{-1}}^{2,n}$ is a space of Campanato-Spanne--Sarason-Janson type, consisting of functions which are `small multipliers of $\mathrm{BMO}$'. This space neither contains nor is contained in $C^{0}.$ See Section \ref{Mean Oscilaltion spaces} and \cite{Spanne_BMO}, \cite{Sarason_VMO}, \cite{Janson_BMO} for more on this. The hypothesis on the coefficient is essentially sharp, as Acquistapace discusses at length in \cite{Acquistapace_linear_elliptic_discontinuous}.  
	
	The main results of the present article are Theorem \ref{Main theorem BMO linear A u0}, Theorem \ref{Main theorem BMO linear AB} and Theorem \ref{Main theorem Maxwell BMO linear AB}. Theorem \ref{Main theorem BMO linear A u0}  should be viewed as a generalization of Acquistapace's results to system \eqref{General Maxwell system BMO} for vector-valued $k$ forms in the case when $B$ is assumed to be the identity. Theorem \ref{Main theorem BMO linear AB} states that when $A, B$ are uniformly Legendre-elliptic and coefficients of $A, B$ are in the class $L^{\infty}\cap \mathrm{V}\mathscr{L}_{\left( 1+ \left\lvert \log r \right\rvert \right)^{-1}}^{2,n},$ then $du$ and $u$ are in $\mathrm{BMO}$ as soon as $f,g, u_{0}, du_{0}$ are $\mathrm{BMO}.$ We derive the regularity result for systems with a linear zeroth order term in Theorem \ref{Main theorem Maxwell BMO linear AB} as a consequence of our estimates. As an application, we have the following theorem for time-harmonic Maxwell system. 
	\begin{theorem}\label{BMO for Maxwell in 3dim}
		Let $\Omega \subset \mathbb{R}^{3}$ be an open, bounded subset with $C^2$ boundary.   Let $E_{0} \in L^{2}\left(\Omega; \mathbb{R}^{3}\right)$ be such that $\operatorname{curl} E_{0}  \in L^{2}\left(\Omega; \mathbb{R}^{3}\right).$ Let $J_{e}, J_{m} \in L^{2}\left(\Omega; \mathbb{R}^{3}\right)$  and suppose  $$J_{e}, J_{m},  E_{0}, \operatorname{curl} E_{0} \in \mathrm{BMO} \left(\Omega; \mathbb{R}^{3}\right).$$ Let $\varepsilon, \mu \in L^{\infty}\cap \mathrm{V}\mathscr{L}_{\left( 1+ \left\lvert \log r \right\rvert \right)^{-1}}^{2,n}\left(\Omega; \mathbb{R}^{3\times 3}\right)$ be uniformly Legendre-elliptic with ellipticity constants $\gamma_{1}, \gamma_{2} >0,$ respectively.
		Let $E, H \in L^{2}\left(\Omega; \mathbb{R}^{3}\right)$ be a weak solution to 
		\begin{align}\label{Maxwell in 3dim}
			\left\lbrace \begin{aligned}
				\operatorname*{curl}  H  &=  i\omega \varepsilon E + J_{e}    
				&&\text{ in } \Omega, \\
				\operatorname*{curl} E &= -i\omega \mu H + J_{m}     &&\text{ in } \Omega, \\
				\nu \times E &= \nu \times E_{0} &&\text{  on } \partial\Omega.
			\end{aligned} 
			\right.
		\end{align}
		Then $E, H \in \mathrm{BMO} \left(\Omega; \mathbb{R}^{3}\right)$ and there exists a constant $ C >0,$ depending only on $\gamma_{1},\gamma_{2}, \omega, \Omega$ and corresponding norms and moduli of $\varepsilon, \mu $ such that we have 
		\begin{align*}
			&\left[\left(E,H\right)\right]_{\mathrm{BMO} \left(\Omega\right)} \\ &\leq C \left( \left\lVert \left(E,H, J_{e}, J_{m}, E_{0}, \operatorname{curl} E_{0} \right) \right\rVert_{L^{2}\left(\Omega\right)} + \left\lVert \left(J_{e}, J_{m}, E_{0}, \operatorname{curl} E_{0}\right)\right\rVert_{\mathrm{BMO} \left(\Omega\right)} \right). 
		\end{align*}
	\end{theorem}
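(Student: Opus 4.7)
The plan is to decouple the first-order system \eqref{Maxwell in 3dim} into a single second-order system in $E$ alone, cast it as an instance of \eqref{General Maxwell system BMO} augmented by a linear zeroth-order term, apply Theorem \ref{Main theorem Maxwell BMO linear AB}, and finally recover $H$ algebraically from $E$. Solving the second equation of \eqref{Maxwell in 3dim} for $H$ gives $H = \frac{i}{\omega}\mu^{-1}\left(\operatorname{curl} E - J_{m}\right)$. Substituting this into the first equation and, separately, taking the divergence of the first equation (using $\operatorname{div}\operatorname{curl}=0$) produces
\begin{align*}
\operatorname{curl}\left(\mu^{-1}\operatorname{curl} E\right) + \omega^{2}\varepsilon E &= \operatorname{curl}\left(\mu^{-1} J_{m}\right) - i\omega J_{e} && \text{in } \Omega,\\
\operatorname{div}\left(\varepsilon E\right) &= \tfrac{i}{\omega}\operatorname{div} J_{e} && \text{in } \Omega,\\
\nu \times E &= \nu \times E_{0} && \text{on } \partial\Omega.
\end{align*}

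Under the standard identification of $\mathbb{R}^{3}$-valued vector fields with $1$-forms---where $\operatorname{curl}$ and $\operatorname{div}$ translate respectively into $d$ (composed with the Hodge star) and $d^{\ast}$---this system has exactly the form of \eqref{General Maxwell system BMO} with $A = \mu^{-1}$, $B = \varepsilon$, $u = E$, enriched by a linear zeroth-order term $\omega^{2}\varepsilon E$, which is the precise setting of Theorem \ref{Main theorem Maxwell BMO linear AB}. The source term $\operatorname{curl}\left(\mu^{-1} J_{m}\right)$ is the codifferential of the Hodge dual of $\mu^{-1} J_{m}$ and hence has the required $d^{\ast}\tilde{f}$ form with $\tilde{f}$ a $\mathrm{BMO}$ tensor; the zeroth-order source $-i\omega J_{e}$ is a $\mathrm{BMO}$ field absorbed by the zeroth-order extension; the divergence-equation right-hand side $\tfrac{i}{\omega}\operatorname{div} J_{e}$ is $d^{\ast}$ of the $\mathrm{BMO}$ field $\tfrac{i}{\omega}J_{e}$; and $\nu\wedge E_{0}$ is already in the required boundary form.

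To invoke Theorem \ref{Main theorem Maxwell BMO linear AB}, I verify its hypotheses on the coefficients. Uniform Legendre-ellipticity of $\mu^{-1}$ follows from that of $\mu$ with ellipticity constant of order $\gamma_{2}\|\mu\|_{\infty}^{-2}$, and similarly for $\varepsilon$. Membership of $\mu^{-1}$ in $L^{\infty}\cap \mathrm{V}\mathscr{L}^{2,n}_{\left(1+|\log r|\right)^{-1}}$ with a modulus controlled by that of $\mu$ follows from the identity $\mu^{-1} - \left(\mu^{-1}\right)_{B} = -\mu^{-1}\left(\mu - \mu_{B}\right)\left(\mu^{-1}\right)_{B}$ on each ball $B$, the $L^{\infty}$ bound on $\mu^{-1}$, and the fact that members of this class are $\mathrm{BMO}$ multipliers. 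Applying Theorem \ref{Main theorem Maxwell BMO linear AB} then yields $E, \operatorname{curl} E \in \mathrm{BMO}\left(\Omega;\mathbb{R}^{3}\right)$ with the quantitative estimate in the statement. Since $\mu^{-1}$ is a $\mathrm{BMO}$ multiplier, the identity $H = \frac{i}{\omega}\mu^{-1}\left(\operatorname{curl} E - J_{m}\right)$ together with $\operatorname{curl} E, J_{m} \in \mathrm{BMO}$ gives $H \in \mathrm{BMO}\left(\Omega;\mathbb{R}^{3}\right)$, with the $\mathrm{BMO}$ norm of $H$ controlled by the quantities on the right of the desired estimate.

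The main obstacle is not the reduction, which is routine algebra, but the coefficient-level closure statements just used: verifying quantitatively that matrix inversion and pointwise multiplication by a $\mathrm{BMO}$ field preserve the Sarason--Janson--Spanne-type space $L^{\infty}\cap\mathrm{V}\mathscr{L}^{2,n}_{\left(1+|\log r|\right)^{-1}}$, so that the norms and moduli of $A = \mu^{-1}$, $B = \varepsilon$ and of the rewritten right-hand sides can all be absorbed into a single constant $C$ depending only on the data listed in the statement. Once those closure properties are in hand, the theorem applies directly and the final estimate follows by combining the bound from Theorem \ref{Main theorem Maxwell BMO linear AB} for $\left(E, \operatorname{curl} E\right)$ with one further multiplier bound for $H$.
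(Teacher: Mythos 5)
Your proposal takes essentially the same route as the paper: eliminate $H$ via $H=\frac{i}{\omega}\mu^{-1}\left(\operatorname{curl}E-J_{m}\right)$, pass to the second-order system in $E$ alone, recognize it as an instance of \eqref{Main Maxwell AB}, verify that $\mu^{-1}$ and $\varepsilon$ remain in $L^{\infty}\cap\mathrm{V}\mathscr{L}^{2,n}_{\left(1+\left\lvert\log r\right\rvert\right)^{-1}}$ with preserved ellipticity (this is exactly Lemma \ref{inverse also elliptic and same space}), note that $\mu^{-1}J_{m}\in\mathrm{BMO}$ by the multiplier property of Remark \ref{Remark about Mean Oscillation space properties}, apply Theorem \ref{Main theorem Maxwell BMO linear AB}, and recover $H$ by one more multiplier bound. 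The only flaw is a sign slip: your second-order equation reads $\operatorname{curl}\left(\mu^{-1}\operatorname{curl}E\right)+\omega^{2}\varepsilon E=\cdots$, whereas the correct reduction gives $\operatorname{curl}\left(\mu^{-1}\operatorname{curl}E\right)=\omega^{2}\varepsilon E-i\omega J_{e}+\operatorname{curl}\left(\mu^{-1}J_{m}\right)$; as you have written it, the zeroth-order term corresponds to $\lambda=-\omega^{2}<0$, which formally falls outside the hypothesis $\lambda\geq 0$ of Theorem \ref{Main theorem Maxwell BMO linear AB}, so the sign should be corrected before invoking it.
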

	The hypotheses of the theorem is satisfied when $\varepsilon, \mu$ are Dini-continuous, so the result holds for such coefficient tensors as well. As far as we are aware, unless the tensors $\varepsilon, \mu$ are assumed to be isotropic, the $\mathrm{BMO}$ estimate for this system was known only for H\"{o}lder continuous $\varepsilon, \mu,$ which can be easily deduced from \cite{Sil_linearregularity}. To the best of our knowledge, our result is new even for anisotropic Dini-continuous coefficients.

	The crux of the difficulty to adapt Acquistapace's technique to our setup is twofold. The second order part of our system is $d^{\ast}\left( A\left(x\right)du\right).$ As adding closed form to a solution $u$ would yield another solution unless $u$ is a $0$-form, this operator has an infinite dimensional kernel and thus is neither elliptic nor Fredholm.  To extract the most information about the regularity of solutions locally, it is necessary to use the `gauge invariance' in a clever way. The novelty of the present contribution lies primarily in this judicious exploitation of `gauge freedom'. At the technical level, this is realized via switching weak formulations as needed to have corresponding Poincar\'{e} inequalities to use at different stages of the argument. The other technical point is that though $d$ commutes with the pullback via diffeomorphisms, $d^{\ast}$ does not. Thus, the usual `flattening the boundary' step involves an additional `second order' term $\operatorname{div} \left( S \nabla u \right).$ In \cite{Sil_linearregularity}, a similar term which appeared was tackled using the regularity of the coefficients. Here we needed to achieve the same without using any regularity for $A.$ 
	
	The rest of the article is organized as follows. We detail our notations in Section \ref{notations}. Section \ref{Function spaces} describe the function spaces used and collect some facts about these spaces. Section \ref{Boundary BMO linear} proves our main up to the boundary estimates. Section \ref{main results as corollaries} is concerned with proving our main results using these estimates. 
	\section{Notations}\label{notations}
	We now fix the notations, for further details we refer to
	\cite{CsatoDacKneuss}. Let $n \geq 2,$ $N \geq 1$ and $0 \leq k \leq n$ be integers. \begin{itemize}
		\item We write $\Lambda^{k}\mathbb{R}^{n}$ to denote the vector space of all alternating $k-$linear maps
		$f:\underbrace{\mathbb{R}^{n}\times\cdots\times\mathbb{R}^{n}}_{k-\text{times}%
		}\rightarrow\mathbb{R}.$ For $k=0,$ we set $\Lambda^{0} \mathbb{R}%
		^{n}  =\mathbb{R}.$ Note that $\Lambda^{k}  \mathbb{R}%
		^{n}  =\{0\}$ for $k>n$ and, for $k\leq n,$ $\operatorname{dim}\left(
		\Lambda^{k}  \mathbb{R}^{n}  \right)  ={\binom{{n}}{{k}}}.$ 
		\item For any two finite dimensional vector spaces $X, Y,$ we use the notation $\mathcal{L}\left(X, Y \right)$ to denote the vector space of all linear maps from $X$ to $Y.$
		
		\item We would be dealing with vector-valued forms a lot, we introduce some shorthand notation to avoid clutter. The integers $n \geq 2,$ $N \geq 1$ would remain fixed but arbitrary for the rest.  The only relevant point is the degree of the form.  To this end, for any integer  $0 \leq k \leq n-1,$ we denote 
		\begin{align*}
			\varLambda^{k}:= \Lambda^{k}\mathbb{R}^{n}\otimes \mathbb{R}^{N}.
		\end{align*}

		\item $\wedge,$ $\lrcorner\,,$ $\left\langle \ ;\ \right\rangle $ and,
		respectively, $\ast$ denote the exterior product, the interior product, the
		scalar product and, respectively, the Hodge star operator, extended componentwise in the obvious fashion to vector-valued forms. 
		
		\item Let $\left\{  e_{1},\cdots,e_{n}\right\}  $ be the standard basis of $\mathbb{R}%
		^{n}.$ The dual basis $\left\{  e^{1},\cdots,e^{n}\right\} $ is a basis for $\Lambda^{1}\mathbb{R}^{n}$ and 
		$\left\{  e^{i_{1}}\wedge\cdots\wedge e^{i_{k}}:1\leq i_{1}<\cdots<i_{k}\leq
		n\right\}$	is a basis of $\Lambda^{k}\mathbb{R}^{n}.$ An element $\xi\in\varLambda^{k}$ will therefore be written as%
		\begin{align*}
			\xi=\sum\limits_{j=1}^{N}\sum_{I\in\mathcal{T}^{k}}\xi
			_{I,j}\,e^{I}\otimes e_{j}=\sum\sum\left( e^{i_{1}}\wedge\cdots\wedge e^{i_{k}}\right)\otimes e_{j} %
		\end{align*}
		where $	\mathcal{T}^{k}=\left\{  I=\left(  i_{1}\,,\cdots,i_{k}\right)
		\in\mathbb{N}^{k}:1\leq i_{1}<\cdots<i_{k}\leq n\right\}.$
		
		\item 	Let $\Omega\subset\mathbb{R}^n$ be open, bounded and with $C^{1}$ boundary. $\nu$ will always denote the outward unit normal field to $\partial\Omega,$ which would be identified with abuse of notation, with the $1$-form with same components. 
		An $\mathbb{R}^{N}$-valued differential $k$-form $\omega$ on $\Omega$ is a measurable function $\omega:\Omega\rightarrow\varLambda^{k}.$ The usual Lebesgue, Sobolev and H\"{o}lder spaces are defined componentwise in the usual way and are denoted by their usual symbols. For any measurable subset $A \subset \mathbb{R}^{n}$ with $\left\lvert A \right\rvert < \infty,$  we use the notation $\left( \cdot \right)_{A}$ to denote the integral average over the set $A$, i.e.  
		$$ \left( f \right)_{A} := \frac{1}{\left\lvert A \right\rvert } \int_{A} f\left(x\right)\ \mathrm{d}x := \fint_{A} f\left(x\right)\ \mathrm{d}x \qquad \text{ for any } f \in L^{1}\left( A \right).$$ This notation is also extended componentwise to vector-valued functions. 
		
		\item Two important differential operators on differential forms are
		\begin{definition}
			A $\mathbb{R}^{N}$-valued differential
			$(k+1)$-form $\varphi\in L^{1}_{\text{loc}}(\Omega;\varLambda^{k+1})$
			is called the exterior derivative of $\omega\in
			L^{1}_{\text{loc}}\left(\Omega;\varLambda^{k}\right),$ denoted by $d\omega$,  if
			\begin{align*}
				\int_{\Omega} \eta\wedge\varphi=(-1)^{n-k}\int_{\Omega} d\eta\wedge\omega, \qquad \text{ for all } \eta\in C^{\infty}_{c}\left(\Omega;\varLambda^{n-k-1}\right).
			\end{align*}
			The Hodge codifferential of $\omega\in L^{1}_{\text{loc}}\left(\Omega;\varLambda^{k}\right)$ is
			an $\mathbb{R}^{N}$-valued $(k-1)$-form, denoted $d^{\ast}\omega\in L^{1}_{\text{loc}}\left(\Omega;\varLambda^{k-1}\right),$
			defined as
			$$
			d^{\ast}\omega:=(-1)^{nk+1} \ast d \ast \omega. $$
		\end{definition}
		See \cite{CsatoDacKneuss} for the properties of  these operators.

		\item We shall use the following two ellipticity conditions for matrix fields. 
		\begin{definition}\label{legendre-hadamard condition}
			A linear map $A:\varLambda^{k}\otimes \mathbb{R}^{n} \rightarrow \varLambda^{k}\otimes \mathbb{R}^{n}$ is said to satisfy the \textbf{Legendre-Hadamard condition} if there exists a constant $ \gamma >0$ such that 
			$$ \langle A ( a\otimes b ) \  ;\  a\otimes b \rangle  \geq \gamma \left\vert  a \right\vert^{2}\left\lvert b \right\vert^{2}, 
			\qquad  \text{ for every } a \in \mathbb{R}^{n}, b \in \varLambda^{k} . $$
		\end{definition}
		\begin{definition}\label{legendre condition}
			A bounded measurable map $A \in L^{\infty}\left( \Omega; \mathcal{L}(\varLambda^{k},\varLambda^{k}) \right)$ is called \textbf{uniformly Legendre elliptic} if there exists a constant $ \gamma >0$ such that  we have 
			$$ \gamma \left\vert  \xi \right\vert^{2} \leq \langle A (x)  \xi \  ;\  \xi \rangle  \leq \left\lVert A \right\rVert_{L^{\infty}}\left\vert  \xi \right\vert^{2} 
			\quad  \text{ for every } \xi \in \varLambda^{k} \text{ and for   a.e. }  x \in \Omega.$$ 
		\end{definition}
		
		\item For any $x \in \mathbb{R}^{n},$ any $\rho >0,$ $B_{\rho}\left(x\right)$ denotes the open ball of radius $\rho >0$ around $x.$ 
		If $x \in \partial\mathbb{R}^{n}_{+},$ $B_{\rho}^{+}\left(x\right)$ will denote the half-ball centered around $x$ in the upper half space, i.e 
		$$B_{\rho}^{+}\left(x\right) = \lbrace  y \in \mathbb{R}^{n}: y_{n} > 0, \lvert y -x \rvert < \rho \rbrace .$$
		Let $\Gamma_{\rho}\left(x\right)$  and $C_{\rho}\left(x\right)$ denote the flat part and the curved part, respectively, of the boundary of the half ball $B_{\rho}^{+}\left(x\right).$ Also, for any open set $\Omega \subset \mathbb{R}^{n},$ we denote the set 
		\begin{align*}
			\Omega_{\rho}\left(x\right) := \Omega \cap B_{\rho}\left(x\right) \quad \text{ and } \quad 	\Omega^{+}_{\rho}\left(x\right) := \Omega \cap B_{\rho}^{+}\left(x\right) (\text{when } x \in \partial\mathbb{R}^{n}_{+}). 
		\end{align*}
		We suppress writing the center when $x=0 \in \mathbb{R}^{n}.$
		\item We reserve the notation $\theta$ to denote the function $\theta: \mathbb{R}_{+} \rightarrow \mathbb{R}_{+},$ defined as 
		\begin{align*}
			\theta \left(r \right):= \sup\limits_{0 < \rho \leq r} \rho \left( 1 + \left\lvert \log \rho \right\rvert \right).
		\end{align*}
		
	\end{itemize}

	\section{Function spaces}\label{Function spaces}
	\subsection{Gaffney and Poincar\'{e} type inequalities}
	Let $\Omega \subset \mathbb{R}^{n}$ open, bounded,$ C^{2}.$ The spaces $W^{d,2}_{T}\left(  \Omega;\varLambda^{k}\right)$ and  $W_{T}^{1,2}\left(  \Omega;\varLambda^{k}\right)  $ are defined as ( see \cite{CsatoDacKneuss} )%
	\begin{align*}
		W^{d,2}_{T}\left(  \Omega;\varLambda^{k}\right) &= \left\lbrace \omega \in L^{2}_{T}\left(  \Omega;\varLambda^{k}\right): du \in L^{2}_{T}\left(  \Omega;\varLambda^{k+1}\right) \text{ and } \nu\wedge\omega=0\text{ on
		}\partial\Omega \right\rbrace, \\ 
		W_{T}^{1,2}\left(  \Omega;\varLambda^{k}\right)  &=\left\{  \omega\in
		W^{1,2}\left(  \Omega;\varLambda^{k}\right)  :\nu\wedge\omega=0\text{ on
		}\partial\Omega\right\}. 
	\end{align*}
	The  subspaces $W_{d^{\ast}, T}^{1,2}(\Omega; \varLambda^{k})$ and $\mathcal{H}^{k}_{T}\left(  \Omega;\varLambda^{k}\right)$ are defined as  
	\begin{align*}
		W_{d^{\ast}, T}^{1,2}(\Omega; \varLambda^{k}) &= \left\lbrace \omega \in W_{T}^{1,2}(\Omega; \varLambda^{k}) : d^{\ast}\omega = 0 \text{ in }
		\Omega \right\rbrace \\
		\mathcal{H}^{k}_{T}\left(  \Omega;\varLambda^{k}\right)  &=\left\{  \omega\in
		W_{T}^{1,2}\left(  \Omega;\varLambda^{k}\right)  :d\omega=0\text{ and }%
		d^{\ast}\omega=0\text{ in }\Omega\right\}
	\end{align*}
	For half-balls, we need the following subspace. 
	\begin{align*}
		W&_{T, \text{flat}}^{1,2}(B_{R}^{+}\left(x_{0}\right) ; \varLambda^{k}) \\&= \left\lbrace \psi \in W^{1,2}(B_{R}^{+}\left(x_{0}\right) ; \varLambda^{k}):
		e_n \wedge \psi = 0 \text{ on }  \Gamma_{R}\left(x_{0}\right), \psi = 0 \text{ on } C_{R}\left(x_{0}\right) \right\rbrace . 
	\end{align*}
	The following Gaffney inequality follows from the standard Gaffney inequality by a contradiction argument ( see Step 1 of the proof of Theorem 6.7 in \cite{CsatoDacKneuss} ). 
	\begin{proposition}[Gaffney inequality]\label{Gaffney with no L2 term}
		Let $\Omega \subset \mathbb{R}^{n}$ be open, bounded, $C^{2}.$ There exists a constant $C = C\left(\Omega, n, N, k\right) >0$ such that 
		\begin{align*}
			\left\lVert \nabla u \right\rVert^{2}_{L^{2}\left(\Omega\right)} \leq C \left( 	\left\lVert du \right\rVert^{2}_{L^{2}\left(\Omega\right)} + 	\left\lVert d^{\ast} u \right\rVert^{2}_{L^{2}\left(\Omega\right)}\right) \quad \text{ for all }  u \in W^{1,2}_{T} \cap \left( \mathcal{H}^{k}_{T}\right)^{\perp}.   
		\end{align*}  
	\end{proposition}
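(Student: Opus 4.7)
The plan is a standard compactness/contradiction argument that removes the $L^{2}$ term from the usual Gaffney inequality
\[
\lVert \nabla u \rVert_{L^{2}(\Omega)}^{2} \leq C_{0} \bigl( \lVert du \rVert_{L^{2}(\Omega)}^{2} + \lVert d^{\ast}u \rVert_{L^{2}(\Omega)}^{2} + \lVert u \rVert_{L^{2}(\Omega)}^{2} \bigr), \qquad u \in W_{T}^{1,2}(\Omega;\varLambda^{k}),
\]
by showing that the intermediate Poincar\'{e}-type bound
\[
\lVert u \rVert_{L^{2}(\Omega)} \leq C_{1} \bigl( \lVert du \rVert_{L^{2}(\Omega)} + \lVert d^{\ast}u \rVert_{L^{2}(\Omega)} \bigr) \qquad \text{for every } u \in W_{T}^{1,2} \cap (\mathcal{H}_{T}^{k})^{\perp}
\]
holds, after which inserting this into the standard Gaffney inequality immediately gives the claim.

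To establish the Poincar\'{e}-type bound I would argue by contradiction: suppose no such constant $C_{1}$ exists, so that there is a sequence $u_{j} \in W_{T}^{1,2}(\Omega;\varLambda^{k}) \cap (\mathcal{H}_{T}^{k})^{\perp}$ normalised by $\lVert u_{j} \rVert_{L^{2}(\Omega)} = 1$ with $\lVert du_{j} \rVert_{L^{2}(\Omega)} + \lVert d^{\ast}u_{j} \rVert_{L^{2}(\Omega)} \to 0$. The standard Gaffney inequality recalled above then yields $\lVert \nabla u_{j} \rVert_{L^{2}(\Omega)}^{2} \leq C_{0}(o(1)+1)$, so $(u_{j})$ is bounded in $W^{1,2}(\Omega;\varLambda^{k})$. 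Passing to a subsequence, Rellich's theorem gives $u_{j} \to u$ strongly in $L^{2}$ and $u_{j} \rightharpoonup u$ weakly in $W^{1,2}$.

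Next I would identify $u$ as an element of $\mathcal{H}_{T}^{k}$: the weak continuity of $d$ and $d^{\ast}$ (which follows from their definition by duality against $C^{\infty}_{c}$ test forms) together with $\lVert du_{j} \rVert_{L^{2}}, \lVert d^{\ast}u_{j} \rVert_{L^{2}} \to 0$ gives $du = 0$ and $d^{\ast}u = 0$ in $\Omega$; the weak closedness of $W_{T}^{1,2}(\Omega;\varLambda^{k})$ preserves the tangential boundary condition $\nu \wedge u = 0$ on $\partial\Omega$. Hence $u \in \mathcal{H}_{T}^{k}(\Omega;\varLambda^{k})$. On the other hand, since $(\mathcal{H}_{T}^{k})^{\perp}$ is an $L^{2}$-closed subspace and $u_{j} \to u$ strongly in $L^{2}$, we also have $u \in (\mathcal{H}_{T}^{k})^{\perp}$. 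Therefore $u \in \mathcal{H}_{T}^{k} \cap (\mathcal{H}_{T}^{k})^{\perp} = \{0\}$, which contradicts $\lVert u \rVert_{L^{2}(\Omega)} = \lim \lVert u_{j} \rVert_{L^{2}(\Omega)} = 1$.

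The only subtle point, and the step I would spend the most care on, is the identification $u \in \mathcal{H}_{T}^{k}$: checking that $\nu \wedge u = 0$ in the trace sense survives weak convergence, and that $du$, $d^{\ast}u$ vanish as distributional limits, essentially amounts to verifying that $W_{T}^{1,2}$ and the operators $d, d^{\ast}$ interact properly under weak $W^{1,2}$ limits, which is standard for $C^{2}$ domains. With this in place, combining the resulting Poincar\'{e} bound with the standard Gaffney inequality yields
\[
\lVert \nabla u \rVert_{L^{2}(\Omega)}^{2} \leq C_{0}\bigl( \lVert du \rVert_{L^{2}}^{2} + \lVert d^{\ast}u \rVert_{L^{2}}^{2} + C_{1}^{2}(\lVert du \rVert_{L^{2}} + \lVert d^{\ast}u \rVert_{L^{2}})^{2}\bigr) \leq C \bigl( \lVert du \rVert_{L^{2}}^{2} + \lVert d^{\ast}u \rVert_{L^{2}}^{2}\bigr),
\]
with $C$ depending only on $\Omega$, $n$, $N$, $k$, as claimed.
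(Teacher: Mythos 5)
Your argument is correct and is exactly the standard compactness/contradiction route the paper has in mind when it cites Step 1 of the proof of Theorem 6.7 in Csat\'o--Dacorogna--Kneuss: normalize in $L^{2}$, use the usual Gaffney inequality to get a $W^{1,2}$ bound, extract a weak $W^{1,2}$ (strong $L^{2}$) limit, identify it as an element of $\mathcal{H}_{T}^{k} \cap (\mathcal{H}_{T}^{k})^{\perp} = \{0\}$, and contradict the normalization; combining the resulting Poincar\'e bound with the standard Gaffney inequality then gives the claimed estimate. The justification of the two delicate points (weak closedness of $W^{1,2}_{T}$ and distributional convergence of $du_{j}, d^{\ast}u_{j}$) is also handled correctly.
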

	We shall also need  following Poincar\'{e} type inequality. 
	\begin{proposition}[Poincar\'{e} inequality]\label{poincare gaffney}
		Let $\Omega \subset \mathbb{R}^{n}$ be open, bounded, $C^{2}.$ There exists a constant $C = C\left(\Omega, n, N, k\right) >0$ such that 
		\begin{align}\label{poincareineq}
			\left\lVert  u \right\rVert^{2}_{W^{1,2}\left(\Omega; \varLambda^{k}\right)} \leq C  	\left\lVert du \right\rVert^{2}_{L^{2}\left(\Omega; \varLambda^{k+1}\right)} \qquad \text{ for all } u \in W^{1,2}_{d^{\ast}, T} \cap \left( \mathcal{H}_{T}^{k}\right)^{\perp}.
		\end{align}
	\end{proposition}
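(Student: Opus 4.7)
The plan is to prove the estimate in two stages: first obtain the gradient bound via the Gaffney inequality of Proposition \ref{Gaffney with no L2 term}, and then upgrade it to a full $W^{1,2}$ bound through a Rellich-type compactness and contradiction argument that exploits the orthogonality to $\mathcal{H}^{k}_{T}$. Since every $u \in W^{1,2}_{d^{\ast},T} \cap (\mathcal{H}^{k}_{T})^{\perp}$ satisfies both $d^{\ast}u = 0$ in $\Omega$ and $\nu\wedge u = 0$ on $\partial\Omega$, Proposition \ref{Gaffney with no L2 term} applies directly and yields
$$\|\nabla u\|_{L^{2}(\Omega)}^{2} \leq C \|du\|_{L^{2}(\Omega)}^{2}.$$
The only remaining task is therefore to establish the matching $L^{2}$ bound $\|u\|_{L^{2}(\Omega)}^{2} \leq C\|du\|_{L^{2}(\Omega)}^{2}$.

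To handle this, I would argue by contradiction. If no such constant existed, one could extract a sequence $\{u_{m}\} \subset W^{1,2}_{d^{\ast},T} \cap (\mathcal{H}^{k}_{T})^{\perp}$ with $\|u_{m}\|_{L^{2}} = 1$ and $\|du_{m}\|_{L^{2}} \to 0$. The gradient estimate just derived forces $\|\nabla u_{m}\|_{L^{2}} \to 0$, so $\{u_{m}\}$ is bounded in $W^{1,2}$, and Rellich--Kondrachov compactness produces a subsequence converging strongly in $L^{2}$ and weakly in $W^{1,2}$ to some $u_{\infty}$ with $\|u_{\infty}\|_{L^{2}} = 1$. Passing to the limit: weak convergence of $du_{m}$ combined with $\|du_{m}\|_{L^{2}} \to 0$ forces $du_{\infty} = 0$; the distributional identity $d^{\ast}u_{m} = 0$ is preserved under weak $W^{1,2}$ convergence and yields $d^{\ast}u_{\infty} = 0$; and continuity of the trace from $W^{1,2}$ to $H^{1/2}(\partial\Omega)$ preserves $\nu \wedge u_{\infty} = 0$. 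Hence $u_{\infty} \in \mathcal{H}^{k}_{T}$. Since $\mathcal{H}^{k}_{T}$ is finite-dimensional, $(\mathcal{H}^{k}_{T})^{\perp}$ is closed in $L^{2}$, so the strong $L^{2}$ limit also satisfies $u_{\infty} \in (\mathcal{H}^{k}_{T})^{\perp}$. Therefore $u_{\infty} \in \mathcal{H}^{k}_{T} \cap (\mathcal{H}^{k}_{T})^{\perp} = \{0\}$, contradicting $\|u_{\infty}\|_{L^{2}} = 1$.

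I do not foresee a serious obstacle; the argument is a textbook pairing of Gaffney with a Rellich compactness argument. The only point deserving genuine care is the simultaneous passage to the limit of all three conditions defining $\mathcal{H}^{k}_{T}$ together with the orthogonality condition, which is routine given $\partial\Omega \in C^{2}$ and $\dim \mathcal{H}^{k}_{T} < \infty$.
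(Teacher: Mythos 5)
Your proof is correct and follows essentially the same route as the paper: reduce to Gaffney's inequality (Proposition~\ref{Gaffney with no L2 term}) for the gradient bound, then supply the missing $L^{2}$ bound by a Rellich--Kondrachov compactness/contradiction argument that exploits finite-dimensionality of $\mathcal{H}^{k}_{T}$ and the orthogonality hypothesis. The paper merely states that the Poincar\'{e} step ``follows by a contradiction argument as in Proposition~1 in \cite{Sil_nonlinearStein}'' and leaves it there; you have written out that argument in full, and the details are right (in particular, $d^{\ast}u_{\infty}=0$ and $\nu\wedge u_{\infty}=0$ do pass to the weak $W^{1,2}$ limit, and $(\mathcal{H}^{k}_{T})^{\perp}$ is $L^{2}$-closed). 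The one cosmetic difference is that the paper phrases the intermediate target as $\|u\|_{L^{2}}^{2}\le c\|\nabla u\|_{L^{2}}^{2}$ and then composes with Gaffney, whereas you aim directly at $\|u\|_{L^{2}}^{2}\le C\|du\|_{L^{2}}^{2}$ and invoke the gradient bound inside the contradiction; these are trivially equivalent presentations of the same argument.
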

	\begin{proof}
		In view of Proposition \ref{Gaffney with no L2 term}, it is enough to show the Poincar\'{e} inequality
		$$ \lVert u \rVert^{2}_{L^{2}} \leq c \lVert \nabla u \rVert^{2}_{L^{2}} \quad \text{ for all }u \in W^{1,2}_{d^{\ast}, T}\left( \Omega ; \varLambda^{k}\right) \cap \left( \mathcal{H}^{k}_{T}\left(  \Omega;\varLambda^{k}\right)\right)^{\perp}.$$ 
		But this follows by a contradiction argument as in Proposition 1 in \cite{Sil_nonlinearStein}. 
	\end{proof}
	\subsection{Weak formulations and gauge fixing}\label{Gauge fixing section}
	Our results depend crucially on our ability to switch weak formulations. Note that 
	$C_{c}^{\infty}\left(\Omega\right)$ is not dense 
	in  $W^{1,2}_{d^{\ast},T}\left(\Omega\right)$ and hence it is far from obvious that weak formulations $W^{1,2}_{d^{\ast},T}$ can be localized.   
	
	\begin{proposition}\label{weak formulation change linear lemma}
		Let $\Omega \subset \mathbb{R}^{n}$ be open, bounded and $C^{2}.$ Let $F \in L^{2}\left( \Omega; \varLambda^{k+1}\right)$ and $A \in L^{\infty}\left( \Omega; \mathcal{L}\left(\varLambda^{k+1}; \varLambda^{k+1} \right)\right).$ The following are all equivalent. 
		\begin{enumerate}[(a)]
			\item $u \in  W^{1,2}\left( \Omega; \varLambda^{k}\right).$ satisfies 
			\begin{align*}
				\int_{\Omega}\langle A(x)du;d\phi\rangle =\int_{\Omega}\langle F;d\phi\rangle \quad \text{ for all } \phi\in W^{1,2}_{d^{\ast}, T}\left(\Omega;\varLambda^k\right).
			\end{align*}
			\item $u \in  W^{1,2}\left( \Omega; \varLambda^{k}\right).$ satisfies \begin{align*}
				\int_{\Omega}\langle A(x)du;d\psi\rangle =\int_{\Omega}\langle F;d\psi\rangle \quad \text{ for all } \psi\in W^{1,2}_{0}\left(\Omega;\varLambda^k\right).
			\end{align*}
			\item $u \in  W^{1,2}\left( \Omega; \varLambda^{k}\right).$
			\begin{align*}
				\int_{\Omega}\langle A(x)du;d\psi\rangle =\int_{\Omega}\langle F;d\psi\rangle \quad \text{ for all } \psi\in W^{1,2}_{T}\left(\Omega;\varLambda^k\right).
			\end{align*}
		\end{enumerate}
	\end{proposition}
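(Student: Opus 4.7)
The implications $(c) \Rightarrow (a)$ and $(c) \Rightarrow (b)$ are immediate since $W^{1,2}_{d^*, T}, W^{1,2}_0 \subset W^{1,2}_T$. It remains to close the loop via two independent arguments: an integration-by-parts / trace argument for $(b) \Rightarrow (c)$, and a Hodge-theoretic gauge-fixing argument for $(a) \Rightarrow (c)$.

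For $(b) \Rightarrow (c)$: Condition (b) applied to $\psi \in C_{c}^{\infty}(\Omega;\varLambda^k) \subset W^{1,2}_0$ gives $d^*(A du - F) = 0$ distributionally in $\Omega$. Since $A du - F \in L^2(\Omega; \varLambda^{k+1})$ and its codifferential vanishes (hence lies in $L^2$), the normal trace $\nu\lrcorner (A du - F)$ is well-defined in $H^{-1/2}(\partial\Omega)$, and Green's identity
\begin{equation*}
\int_\Omega \langle A du - F; d\psi\rangle \;=\; \int_\Omega \langle d^*(A du - F);\psi\rangle \;+\; \langle \nu\wedge\psi,\, A du - F\rangle_{\partial\Omega}
\end{equation*}
holds for $\psi \in W^{1,2}(\Omega;\varLambda^k)$, with the boundary pairing interpreted in $H^{1/2}\times H^{-1/2}$ duality. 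For $\psi \in W^{1,2}_T$, the first term on the right vanishes by the distributional equation and the boundary term vanishes since $\nu \wedge \psi = 0$ on $\partial\Omega$; this establishes (c).

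For $(a) \Rightarrow (c)$: Given $\psi \in W^{1,2}_T$, I would construct $\gamma \in W^{2,2}(\Omega;\varLambda^{k-1}) \cap W^{1,2}_0(\Omega;\varLambda^{k-1})$ such that $\phi := \psi - d\gamma$ lies in $W^{1,2}_{d^*, T}$. Taking $\gamma$ with full Dirichlet trace zero ensures $d\gamma|_{\partial\Omega}$ is purely normal (locally, writing $\gamma = \rho\tilde\gamma$ near $\partial\Omega$, one finds $d\gamma|_{\partial\Omega} = -\nu \wedge \tilde\gamma|_{\partial\Omega}$), hence $\nu \wedge d\gamma = 0$ and $\nu \wedge \phi = \nu\wedge\psi - \nu\wedge d\gamma = 0$. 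The remaining requirement $d^*\phi = 0$ translates to solving $d^* d\gamma = d^*\psi$ in $\Omega$, which I would achieve by minimizing $\gamma \mapsto \tfrac{1}{2}\int_\Omega |d\gamma|^2 - \int_\Omega \langle \psi; d\gamma\rangle$ over $\gamma$ in a gauge-fixed subclass (such as $W^{1,2}_0 \cap \ker d^*$ modulo harmonic forms), with coercivity supplied by the Gaffney-Poincar\'{e}-type inequality of Proposition \ref{poincare gaffney} adapted to the Dirichlet setting. Since $d\phi = d\psi$, applying (a) to $\phi$ yields (c) for $\psi$.

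The main obstacle is the gauge-fixing step: the operator $d^*d$ on $(k-1)$-forms has an infinite-dimensional kernel consisting of all closed forms, so the equation $d^* d\gamma = d^*\psi$ is underdetermined, and one must impose a gauge to ensure well-posedness. The Poincar\'{e}-type inequality of Proposition \ref{poincare gaffney}, together with the $W^{1,2}_0$ Poincar\'{e} inequality, is the key technical input that makes the underlying variational problem coercive and produces the desired $\gamma$.
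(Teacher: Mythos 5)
Your overall logic closes the equivalence, and both non-trivial implications are argued by a route genuinely different from the paper's. The paper does $(a)\Rightarrow(b)$ by solving, for $\psi\in W_0^{1,2}$, the relative-BC Hodge system $d^*d\alpha=-d^*\psi$, $d^*\alpha=0$, $\nu\wedge\alpha=0$ for $\alpha\in W^{2,2}(\Omega;\varLambda^{k-1})$ and setting $\phi=\psi+d\alpha\in W^{1,2}_{d^*,T}$; and does $(b)\Rightarrow(c)$ by the Dirichlet Poincar\'{e} lemma (Theorem 8.16 of \cite{CsatoDacKneuss}), producing $\psi\in W_0^{1,2}$ with $d\psi=d\phi$ for a given $\phi\in W_T^{1,2}$, so that no trace theory is invoked at all. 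You instead argue $(b)\Rightarrow(c)$ via a distributional Green's identity and $(a)\Rightarrow(c)$ directly by Dirichlet gauge fixing, which is a legitimate alternative structure, but each step leaves something unfinished.

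In $(b)\Rightarrow(c)$, once $d^*(Adu-F)=0$ distributionally, the claim that the boundary pairing vanishes because $\nu\wedge\psi=0$ requires justification. The boundary functional defined via the Green identity for $f\in L^2$ with $d^*f\in L^2$ a priori pairs against the \emph{full} trace $\psi|_{\partial\Omega}\in H^{1/2}(\partial\Omega)$; to see that it factors through $\nu\wedge\psi$ one needs to approximate $Adu-F$ by smooth forms in the graph norm of $d^*$, so that the pointwise identity $\langle\psi;\nu\lrcorner f\rangle=\langle\nu\wedge\psi;f\rangle$ passes to the limit, or equivalently one needs the Hodge-theoretic fact $\overline{d\bigl(W_T^{1,2}\bigr)}=\overline{d\bigl(W_0^{1,2}\bigr)}$. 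This is true for a $C^2$ domain but should be cited or proved; the paper's constructive substitution $\phi\mapsto\psi$ sidesteps this entirely.

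In $(a)\Rightarrow(c)$, your plan is the same as the paper's $(a)\Rightarrow(b)$ in spirit (correct by $d$ of a $(k-1)$-form), but your choice of full Dirichlet boundary condition for $\gamma$, combined with a minimization rather than an invocation of an existence theorem, leaves two gaps. First, minimizing over the gauge-fixed class $W_0^{1,2}\cap\ker d^*$ yields an Euler--Lagrange equation only against test forms in that same class; to conclude $d^*(d\gamma-\psi)=0$ distributionally you must know $d\bigl(W_0^{1,2}\cap\ker d^*\bigr)=d\bigl(W_0^{1,2}\bigr)$, which is true but not automatic. Second, the minimizer is a priori only in $W^{1,2}$, while your trace computation of $d\gamma$ on $\partial\Omega$ (to get $\nu\wedge d\gamma=0$) requires $\gamma\in W^{2,2}$, an elliptic regularity step you have not supplied. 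Note also that the weaker tangential condition $\nu\wedge\gamma=0$ already implies $\nu\wedge d\gamma=0$, so the full Dirichlet condition buys nothing here and takes you away from the relative-BC Hodge theory where the needed $W^{2,2}$ regularity is off the shelf; this is precisely why the paper solves the corrector equation with $\nu\wedge\alpha=0$.
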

	\begin{proof}
		\textbf{(a) $\Rightarrow$ (b) } Given $ \psi \in W_{0}^{1,2}\left( \Omega; \varLambda^{k}\right),$ 
		we first find $\alpha \in W^{2,2}\left( \Omega; \varLambda^{k}\right)$ which solves the system
		\begin{align*}
			\left\lbrace \begin{aligned}
				d^\ast d \alpha  &= -d^\ast \psi &&\text{ in } \Omega,\\
				d^\ast \alpha &= 0 &&\text{ in } \Omega, \\
				\nu\wedge \alpha &= 0 &&\text{  on } \partial \Omega,
			\end{aligned} 
			\right. 
		\end{align*}
		Since since $\nu \wedge  \alpha = 0$ on $\partial \Omega$ implies $\nu \wedge  d\alpha = 0$ on $\partial \Omega.$ the result follows by setting $\phi = \psi + d \alpha.$\smallskip 
		
		\textbf{(b) $\Rightarrow$ (c) } Given any $\phi \in W^{1,2}_{T}\left(\Omega; \varLambda^{k}\right),$ using Theorem 8.16 in \cite{CsatoDacKneuss}, we find $\psi \in W_{0}^{1,2}\left( \Omega; \varLambda^{k}\right)$ by solving the system 
		\begin{align*}
			\left\lbrace \begin{aligned}
				d\psi &= d\phi &&\text{ in } \Omega, \\
				\psi &= 0 &&\text{ on } \partial\Omega. 
			\end{aligned}\right. 
		\end{align*}
		
		Since $W^{1,2}_{d^{\ast}, T} \subset W^{1,2}_{T},$ (c) $\Rightarrow$ (a) is trivial and the proof is finished. 	
	\end{proof}
	\begin{remark}
		Proposition \ref{weak formulation change linear lemma} holds because the equation is gauge-invariant, i.e. invariant under translation by kernel of $d$ and the following equality 
		\begin{align*}
			d \left( W^{1,2}_{d^{\ast}, T}\right) = 	d \left( W^{1,2}_{0}\right) = 	d \left( W^{1,2}_{T}\right). 
		\end{align*}
	\end{remark}
	
	\subsection{Existence results}
	We record here an existence result that we would need. 
	\begin{proposition}\label{existence in W012}
		Let $\Omega \subset \mathbb{R}^{n}$ be open, bounded and Lipschitz. Let $A: \varLambda^{k+1} \rightarrow \varLambda^{k+1}$ be Legendre elliptic with constant $\gamma >0.$ For any $f \in L^{2}\left( \Omega ; \varLambda^{k}\right)$ and any $F \in L^{2}\left( \Omega ; \varLambda^{k}\otimes\mathbb{R}^{n}\right),$ there exists unique weak solution to the following 
		\begin{align*}
			\left\lbrace \begin{aligned}
				d^{\ast}\left(A du\right) + d d^{\ast}u &= f + \operatorname{div}F &&\text{ in } \Omega, \\
				u &=0 &&\text{ on } \partial\Omega. 
			\end{aligned}\right. 
		\end{align*}
	\end{proposition}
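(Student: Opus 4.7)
The plan is to apply the Lax--Milgram theorem on the Hilbert space $H := W^{1,2}_{0}(\Omega; \varLambda^{k})$, equipped with the usual $W^{1,2}$ inner product. One seeks $u \in H$ such that
\begin{align*}
    a(u, \phi) := \int_{\Omega} \langle A\,du ; d\phi \rangle + \int_{\Omega} \langle d^{\ast}u ; d^{\ast}\phi \rangle = \int_{\Omega} \langle f ; \phi \rangle - \int_{\Omega} \langle F ; \nabla \phi \rangle =: L(\phi)
\end{align*}
for every $\phi \in H$, with $L$ being the natural $H^{-1}$ interpretation of $f+\operatorname{div}F$. Continuity of both $a$ and $L$ is immediate from $A \in L^{\infty}$, $f,F \in L^{2}$, and the pointwise estimates $|du|, |d^{\ast}u| \leq c\,|\nabla u|$.

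The only nontrivial ingredient is coercivity of $a$. The key observation is the compactly-supported integration-by-parts identity
\begin{align*}
    \int_{\Omega} |d\psi|^{2} + \int_{\Omega} |d^{\ast}\psi|^{2} = \int_{\Omega} |\nabla \psi|^{2}, \qquad \psi \in C^{\infty}_{c}(\Omega; \varLambda^{k}),
\end{align*}
which is the flat-metric Weitzenb\"{o}ck identity and follows from routine componentwise integration by parts (no boundary contributions arise, since $\psi$ has compact support). Density of $C^{\infty}_{c}$ in $H$ then extends the identity to every $u \in H$. Combining this with the Legendre ellipticity of $A$ and the classical Poincar\'{e} inequality on $W^{1,2}_{0}$ gives
\begin{align*}
    a(u,u) \geq \min(\gamma, 1)\bigl(\|du\|_{L^{2}}^{2} + \|d^{\ast}u\|_{L^{2}}^{2}\bigr) = \min(\gamma, 1)\,\|\nabla u\|_{L^{2}}^{2} \geq c\,\|u\|_{W^{1,2}}^{2}.
\end{align*}
Lax--Milgram then produces a unique $u \in H$, and testing the weak identity against $\phi \in C^{\infty}_{c}(\Omega; \varLambda^{k})$ shows that $u$ solves the PDE in the sense of distributions; the boundary condition $u = 0$ on $\partial\Omega$ is encoded in the requirement $u \in H$.

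There is no substantive technical obstacle here. The key conceptual point is that the gauge-fixing term $dd^{\ast}u$ on the left is precisely what converts the seminorm $\|du\|_{L^{2}}^{2}$ into the full Dirichlet integral via the Weitzenb\"{o}ck identity; without it, $d^{\ast}(A\,du)$ alone has the infinite-dimensional kernel of closed forms and the weak problem would fail to be coercive on $H$.
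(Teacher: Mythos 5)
Your proof is correct and rests on the same essential ingredient as the paper's: the flat Weitzenb\"{o}ck identity $\lVert d\psi\rVert_{L^2}^2 + \lVert d^{\ast}\psi\rVert_{L^2}^2 = \lVert\nabla\psi\rVert_{L^2}^2$ on $C^{\infty}_c$, extended by density to $W^{1,2}_0$, combined with Lax--Milgram and Poincar\'{e}. The paper packages the same fact slightly differently --- it constructs a constant tensor $\tilde A$ on $\varLambda^k\otimes\mathbb{R}^n$, checks that the algebraic identity $a\lrcorner(a\wedge b) + a\wedge(a\lrcorner b) = |a|^2 b$ makes $\tilde A$ Legendre--Hadamard elliptic, invokes standard existence for such constant-coefficient systems, and then identifies $\int\langle\tilde A\nabla u,\nabla v\rangle$ with $\int\langle A\,du,dv\rangle + \int\langle d^{\ast}u,d^{\ast}v\rangle$ via Fourier transform on $C^{\infty}_c$ and density --- but this is the Fourier-side form of exactly the identity you use. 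You work directly with the $d/d^{\ast}$ bilinear form and use the pointwise Legendre bound on $A$, which is a little more streamlined; the paper's detour through $\tilde A$ and Legendre--Hadamard is equivalent in content here since the proposition assumes the stronger Legendre condition anyway.
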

	\begin{proof}
		Define the linear map $\tilde{A}: \varLambda^{k}\otimes \mathbb{R}^{n} \rightarrow \varLambda^{k}\otimes \mathbb{R}^{n}$ by the pointwise condition
		\begin{align*}
			\left\langle \tilde{A} \left(a_{1} \otimes b_{1}\right); a_{2} \otimes b_{2} \right\rangle := 	\left\langle A \left(a_{1} \wedge b_{1}\right); a_{2} \wedge b_{2} \right\rangle + 	\left\langle a_{1} \lrcorner b_{1}; a_{2} \lrcorner b_{2} \right\rangle
		\end{align*}
		for every $a_{1}, a_{2} \in \mathbb{R}^{n},$ $b_{1}, b_{2} \in \varLambda^{k},$ extended by linearity. Using with the algebraic identity ( see Proposition 2.16 in \cite{CsatoDacKneuss} ), $a \lrcorner \left( a \wedge b \right) + a \wedge \left( a \lrcorner b \right) = \left\lvert a \right\rvert^{2} b, $ it is easy to check that the constant tensor $\tilde{A}$ is Legendre-Hadamard elliptic. Now standard arguments establish existence and uniqueness of $u \in W_{0}^{1,2}\left( \Omega ; \varLambda^{k}\right)$  such that 
		\begin{align*}
			\int_{\Omega}\left\langle \tilde{A}\nabla u, \nabla \phi \right\rangle = \int_{\Omega} \left\langle f, \phi \right\rangle + \int_{\Omega} \left\langle F, \nabla \phi \right\rangle \qquad \text{ for all } \phi \in W_{0}^{1,2}\left( \Omega ; \varLambda^{k}\right).
		\end{align*}
		But this completes the proof as we have 
		\begin{align*}
			\int_{\Omega}\left\langle \tilde{A}\nabla u, \nabla v \right\rangle = \int_{\Omega}\left\langle Adu, dv \right\rangle + \int_{\Omega} \left\langle d^{\ast}u, d^{\ast}v \right\rangle  \qquad \text{ for all } u,v \in W_{0}^{1,2}\left( \Omega ; \varLambda^{k}\right).
		\end{align*} This can be proved by Fourier transform when $u, v \in C_{c}^{\infty}\left( \Omega ; \varLambda^{k}\right)$ and the general case follows by density. 
	\end{proof}
	\subsection{Mean Oscillation spaces}\label{Mean Oscilaltion spaces}
	Let $\Omega \subset \mathbb{R}^{n}$ be open, bounded and Lipschitz. For $1\leq p <  \infty$ and $0 \leq \lambda \leq n+2,$  $\mathscr{L}^{p,\lambda}\left(\Omega\right) $ denotes the Campanato space ( see \cite{Campanato_ellipticsystem_campanatospace} ) of all $f \in L^{p}\left(\Omega\right)$ such that 
	$$ [f ]_{\mathscr{L}^{p,\lambda}\left(\Omega\right)}^{p} := \sup\limits_{\rho >0, x \in \Omega} 
	\rho^{-\lambda} \fint_{\Omega_{\rho}\left(x\right)}\left\lvert f\left(y\right) - \left(f\right)_{\Omega_{\rho}\left(x\right)}\right\rvert^{p}\ \mathrm{d}y < \infty, $$ where 
	endowed with the norm 
	$ \lVert f \rVert_{\mathscr{L}^{p,\lambda}\left(\Omega\right)} := \lVert  f \rVert_{L^{p}(\Omega)} +  
	\left[ f \right]_{\mathscr{L}^{p,\lambda}\left(\Omega\right)}.$
	The space $\mathscr{L}^{1,n}\left(\Omega\right)$ is the space $\mathrm{BMO}\left(\Omega\right)$ and $ \left[ f\right]_{\mathrm{BMO}\left(\Omega\right)} = [f ]_{\mathscr{L}^{1,n}\left(\Omega\right)}.$	
	By John-Nirenberg inequality, $	\mathscr{L}^{p,n}\left(\Omega\right) \simeq \mathrm{BMO}\left(\Omega\right)$  for any $1 \leq p < \infty,$ with equivalent seminorms ( see \cite{giaquinta-martinazzi-regularity} ). We record two estimates about integral averages of $\mathrm{BMO}$ functions. 
	\begin{proposition}[Proposition 1.15, \cite{Acquistapace_linear_elliptic_discontinuous}]\label{integral average by bmo norm interior}
		Let $\Omega \subset \mathbb{R}^{n}$ be open and bounded and let $\Omega^{'} \subset \subset \Omega$ be open. Let $f \in \mathrm{BMO}\left(\Omega\right).$ Then there exists a constant $C = C \left(n\right)>0$ such that for any $ 0 < \sigma_{0}< \min \left\lbrace 2, \operatorname{dist}\left( \Omega^{'}, \partial \Omega \right)/16 \right\rbrace,$ we have 
		\begin{align*}
			\left\lvert \left(f\right)_{B_{r}\left(x\right)}\right\rvert  \leq C \left\lbrace \left(1 + \left\lvert \log r \right\rvert\right)\left[ f\right]_{\mathrm{BMO}\left(\Omega\right)} + \sigma_{0}^{-\frac{n}{2}}\left\lVert f \right\rVert_{L^{2}\left( \Omega \right)}\right\rbrace , 
		\end{align*}
		for all $0 < r< \sigma_{0}$ and for all $x \in \Omega^{'}.$ 
	\end{proposition}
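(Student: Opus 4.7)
The strategy is the standard dyadic telescoping argument: compare $\left(f\right)_{B_{r}\left(x\right)}$ to the average over the largest admissible ball $B_{\sigma_{0}}\left(x\right)$ by a chain of dyadic shrinkings, then bound the large average crudely by the $L^{2}$ norm.

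Fix $x \in \Omega^{'}$ and $0 < r < \sigma_{0}$. By the hypothesis $\sigma_{0} < \operatorname{dist}\left(\Omega^{'},\partial\Omega\right)/16$, the entire ball $B_{\sigma_{0}}\left(x\right)$ lies inside $\Omega$. Set $r_{k} := 2^{-k}\sigma_{0}$ for $k = 0,1,2,\ldots$ and let $N \in \mathbb{N}$ be the unique integer with $r_{N} \leq r < r_{N-1}$. Since $\sigma_{0} < 2$, one has $N \leq C\left(1 + \left\lvert \log r \right\rvert\right)$ for a dimensional constant $C$.

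For each $k = 1,\ldots,N-1$, since $B_{r_{k}}\left(x\right) \subset B_{r_{k-1}}\left(x\right)$ and $\left\lvert B_{r_{k-1}}\right\rvert / \left\lvert B_{r_{k}}\right\rvert = 2^{n}$, the BMO property gives
\begin{align*}
\left\lvert \left(f\right)_{B_{r_{k}}\left(x\right)} - \left(f\right)_{B_{r_{k-1}}\left(x\right)}\right\rvert
&\leq \fint_{B_{r_{k}}\left(x\right)} \left\lvert f - \left(f\right)_{B_{r_{k-1}}\left(x\right)}\right\rvert \\
&\leq 2^{n} \fint_{B_{r_{k-1}}\left(x\right)} \left\lvert f - \left(f\right)_{B_{r_{k-1}}\left(x\right)}\right\rvert \leq 2^{n}\left[f\right]_{\mathrm{BMO}\left(\Omega\right)}.
\end{align*}
The same reasoning, with $B_{r}\left(x\right) \subset B_{r_{N-1}}\left(x\right)$ and volume ratio at most $2^{n}$, bounds $\left\lvert \left(f\right)_{B_{r}\left(x\right)} - \left(f\right)_{B_{r_{N-1}}\left(x\right)}\right\rvert$ by $2^{n}\left[f\right]_{\mathrm{BMO}\left(\Omega\right)}$. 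Telescoping along the chain $B_{r}\left(x\right) \subset B_{r_{N-1}}\left(x\right) \subset \cdots \subset B_{r_{0}}\left(x\right) = B_{\sigma_{0}}\left(x\right)$ yields
\begin{align*}
\left\lvert \left(f\right)_{B_{r}\left(x\right)} - \left(f\right)_{B_{\sigma_{0}}\left(x\right)}\right\rvert \leq 2^{n} N \left[f\right]_{\mathrm{BMO}\left(\Omega\right)} \leq C \left(1 + \left\lvert \log r \right\rvert \right) \left[f\right]_{\mathrm{BMO}\left(\Omega\right)}.
\end{align*}

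Finally, by Cauchy--Schwarz and the inclusion $B_{\sigma_{0}}\left(x\right) \subset \Omega$,
\begin{align*}
\left\lvert \left(f\right)_{B_{\sigma_{0}}\left(x\right)}\right\rvert \leq \left(\fint_{B_{\sigma_{0}}\left(x\right)} \left\lvert f \right\rvert^{2}\right)^{1/2} \leq \left\lvert B_{\sigma_{0}}\right\rvert^{-1/2} \left\lVert f \right\rVert_{L^{2}\left(\Omega\right)} \leq C \sigma_{0}^{-n/2} \left\lVert f \right\rVert_{L^{2}\left(\Omega\right)}.
\end{align*}
Combining the two inequalities by the triangle inequality gives the claim. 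There is no genuine obstacle here; the only subtlety is ensuring $B_{\sigma_{0}}\left(x\right) \subset \Omega$, which is precisely what the assumption $\sigma_{0} < \operatorname{dist}\left(\Omega^{'},\partial\Omega\right)/16$ guarantees, and that $\log \sigma_{0}$ is dimensionally bounded, which is guaranteed by $\sigma_{0} < 2$.
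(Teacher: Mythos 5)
Your proof is correct. The paper does not supply a proof of this proposition—it cites it directly from Acquistapace—but your dyadic telescoping argument is exactly the standard (and essentially unique) route: bound consecutive dyadic averages by $2^{n}[f]_{\mathrm{BMO}}$, count $N \lesssim 1+\lvert\log r\rvert$ steps using $\sigma_{0}<2$, and control the coarsest average $\left(f\right)_{B_{\sigma_{0}}(x)}$ by Cauchy--Schwarz. The only hypothesis you actually need from the distance condition is $B_{\sigma_{0}}(x)\subset\Omega$ (so that $\Omega_{\rho}(x)=B_{\rho}(x)$ for all $\rho\le\sigma_{0}$, making the BMO seminorm directly applicable on full balls); the extra factor of $16$ in the statement is a safety margin inherited from Acquistapace's formulation and plays no role in this argument.
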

	\begin{proposition}[Proposition 1.16, \cite{Acquistapace_linear_elliptic_discontinuous}]\label{integral average by bmo norm boundary}
		Let $x_{0} \in \partial\mathbb{R}^{n}_{+}$ and $R >0.$ Let $f \in \mathrm{BMO}\left( B_{R}^{+}\left(x_{0}\right)\right).$ Then there exists a constant $C = C \left(n\right)>0$ such that  for any $ 0 < \sigma_{0}< R/8,$ we have  
		\begin{align*}
			\left\lvert \left(f\right)_{B^{+}_{r}\left(x\right)}\right\rvert  \leq C \left\lbrace \left(1 + \left\lvert \log r \right\rvert\right)\left[ f\right]_{\mathrm{BMO}\left(B_{R}^{+}\left(x_{0}\right)\right)} + \sigma_{0}^{-\frac{n}{2}}\left\lVert f \right\rVert_{L^{2}\left( B_{R}^{+}\left(x_{0}\right) \right)}\right\rbrace , 
		\end{align*}
		for all $0 < r< \sigma_{0}$ and for all $x \in \Gamma_{R/2}\left(x_{0}\right).$
	\end{proposition}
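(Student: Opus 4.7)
The plan is a standard dyadic-chaining argument on integral averages, with the only care being that each half-ball in the chain stays well inside the reference domain $B_R^+(x_0)$. Fix $x \in \Gamma_{R/2}(x_0)$ and $0 < r < \sigma_{0} < R/8$. Since $x \in \partial\mathbb{R}^n_+$ with $|x - x_0| < R/2$, the triangle inequality yields $B_\rho^+(x) \subset B_R^+(x_0)$ for every $0 < \rho < R/2$; note that $B_\rho^+(x)$ is a genuine upper half-ball because $x$ lies on the hyperplane $\partial\mathbb{R}^n_+$. Choose the integer $k \geq 0$ with $2^{k} r \leq \sigma_{0}/2 < 2^{k+1} r$, so that $k \leq \log_{2}(\sigma_{0}/r) + 1 \leq C(n)(1 + \lvert \log r \rvert)$ (the $\sigma_{0}$-contribution to this constant is absorbed since $\sigma_{0}$ may be taken bounded by a fixed constant, exactly as in Proposition \ref{integral average by bmo norm interior}).

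Next, I would telescope along the dyadic chain $B_r^+(x) \subset B_{2r}^+(x) \subset \cdots \subset B_{2^{k} r}^+(x) \subset B_R^+(x_0)$. For each $0 \leq j \leq k-1$, using $\lvert B_{2^{j+1}r}^+(x) \rvert = 2^{n} \lvert B_{2^{j} r}^+(x) \rvert$, the elementary estimate
\begin{align*}
\bigl\lvert (f)_{B_{2^{j} r}^+(x)} - (f)_{B_{2^{j+1}r}^+(x)} \bigr\rvert
&\leq \fint_{B_{2^{j} r}^+(x)} \bigl\lvert f - (f)_{B_{2^{j+1}r}^+(x)} \bigr\rvert \\
&\leq 2^{n} \fint_{B_{2^{j+1}r}^+(x)} \bigl\lvert f - (f)_{B_{2^{j+1}r}^+(x)} \bigr\rvert
\leq 2^{n}\, [f]_{\mathrm{BMO}(B_R^+(x_0))}
\end{align*}
holds because $B_{2^{j+1}r}^+(x) \subset B_R^+(x_0)$. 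Summing over $j$ gives
\begin{align*}
\bigl\lvert (f)_{B_r^+(x)} - (f)_{B_{2^{k} r}^+(x)} \bigr\rvert
\leq 2^{n}\, k\, [f]_{\mathrm{BMO}(B_R^+(x_0))}
\leq C(n)\,(1 + \lvert \log r \rvert)\,[f]_{\mathrm{BMO}(B_R^+(x_0))}.
\end{align*}

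Finally, I would bound the anchor average at the coarsest scale by Cauchy--Schwarz,
\begin{align*}
\bigl\lvert (f)_{B_{2^{k} r}^+(x)} \bigr\rvert
\leq \lvert B_{2^{k} r}^+(x) \rvert^{-1/2}\, \lVert f \rVert_{L^{2}(B_R^+(x_0))}
\leq C(n)\, \sigma_{0}^{-n/2}\, \lVert f \rVert_{L^{2}(B_R^+(x_0))},
\end{align*}
using $2^{k} r \geq \sigma_{0}/4$ and $\lvert B_\rho^+(x) \rvert = \tfrac{1}{2}\omega_{n}\rho^{n}$. Adding the two displays yields the stated estimate. The argument is essentially routine; the only point that deserves a moment's attention is the geometric verification that the hypothesis $x \in \Gamma_{R/2}(x_0)$ (forcing $x \in \partial \mathbb{R}^n_+$) is exactly what ensures every dyadic half-ball $B_{2^{j} r}^+(x)$ in the chain is a genuine, uniformly-shaped upper half-ball sitting inside $B_R^+(x_0)$, so that the $\mathrm{BMO}$ seminorm on the reference domain controls every step of the telescoping with dimension-only constants.
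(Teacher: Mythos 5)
The paper cites this result from Acquistapace and gives no proof of its own, so your argument stands on its own; the dyadic chaining you use is the standard route and is essentially correct. Two points deserve to be upgraded from caveats to explicit steps. First, the bound $k \leq C(n)(1 + \lvert \log r\rvert)$ genuinely requires an a priori upper bound on $\sigma_{0}$ (say $\sigma_{0} \leq 2$), and this is not cosmetic: without such a bound the stated inequality with a purely dimensional constant is false. Take $x_{0} = x = 0$, $f(y) = \log(R/\lvert y\rvert)$ on $B_{R}^{+}(0)$, $r = 1$, $\sigma_{0} = R/10$; then $(f)_{B_{1}^{+}(0)} \sim \log R$, while $[f]_{\mathrm{BMO}(B_{R}^{+})} = O(1)$ and $\sigma_{0}^{-n/2}\lVert f\rVert_{L^{2}(B_{R}^{+})} = O(1)$, so the claimed estimate fails as $R \to \infty$. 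This is consistent with the interior version, Proposition~\ref{integral average by bmo norm interior}, where $\sigma_{0} < 2$ is imposed explicitly, and with the paper's own usage (always $\sigma_{0} < R/16 < 1$), so the constraint should be stated as a hypothesis rather than an aside. Second, your choice of $k \geq 0$ with $2^{k} r \leq \sigma_{0}/2 < 2^{k+1} r$ presupposes $r \leq \sigma_{0}/2$; for $\sigma_{0}/2 < r < \sigma_{0}$ you should set $k = 0$ and estimate the anchor average directly, noting $\lvert B_{r}^{+}(x)\rvert^{-1/2} \leq C(n)\sigma_{0}^{-n/2}$ since $r > \sigma_{0}/2$. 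With these two adjustments the proof is complete. The geometric observation that $x \in \Gamma_{R/2}(x_{0})$ forces each $B_{2^{j}r}^{+}(x)$ to be a genuine half-ball contained in $B_{R}^{+}(x_{0})$, so that $B_{2^{j}r}^{+}(x) = B_{R}^{+}(x_{0}) \cap B_{2^{j}r}(x)$ and the volume ratio between successive scales is exactly $2^{n}$, is correct and is precisely what lets the $\mathrm{BMO}$ seminorm on the reference domain control each telescoping step with a dimension-only constant.
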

	We would crucially use the following generalized Campanato type spaces.  
	\begin{definition}
		We define the space $\mathscr{L}_{\left( 1+ \left\lvert \log r \right\rvert \right)^{-1}}^{2,n}\left(\Omega\right)$ as the vector space of all functions $f \in L^{2}\left(\Omega\right)$ such that 
		\begin{align*}
			\left[ f \right]_{\mathscr{L}_{\left( 1+ \left\lvert \log r \right\rvert \right)^{-1}}^{2,n}\left(\Omega\right)}:= \sup\limits_{  \rho >0, x \in \Omega }   \left( 1+ \left\lvert \log \rho \right\rvert\right) \left[ \fint_{\Omega_{\rho}\left(x\right)}\left\lvert f\left(y\right) - \left(f\right)_{\Omega_{\rho}\left(x\right)}\right\rvert^{2}\ \mathrm{d}y \right]^{\frac{1}{2}} < \infty, 
		\end{align*}
		equipped with the norm 
		\begin{align*}
			\left\lVert f\right\rVert_{\mathscr{L}_{\left( 1+ \left\lvert \log r \right\rvert \right)^{-1}}^{2,n}\left(\Omega\right)} : = \left\lVert f \right\rVert_{L^{2}\left(\Omega\right)} + \left[ f \right]_{\mathscr{L}_{\left( 1+ \left\lvert \log r \right\rvert \right)^{-1}}^{2,n}\left(\Omega\right)}. 
		\end{align*}
		For any $f \in \mathscr{L}_{\left( 1+ \left\lvert \log r \right\rvert \right)^{-1}}^{2,n}\left(\Omega\right),$ the mean oscillation `modulus' of $f$ is defined as the function $\Theta^{f}:\mathbb{R}_{+} \rightarrow \mathbb{R}_{+},$ defined by 
		\begin{align*}
			\Theta^{f} \left( r \right):= \sup\limits_{0 < \rho \leq r, x \in \Omega}   \left( 1+ \left\lvert \log \rho \right\rvert\right) \left[ \fint_{\Omega_{\rho}\left(x\right)}\left\lvert f\left(y\right) - \left(f\right)_{\Omega_{\rho}\left(x\right)}\right\rvert^{2}\ \mathrm{d}y \right]^{\frac{1}{2}}
		\end{align*} 
		We denote by $\mathrm{V}\mathscr{L}_{\left( 1+ \left\lvert \log r \right\rvert \right)^{-1}}^{2,n}\left(\Omega\right)$ the subspace defined by 
		\begin{align*}
			\mathrm{V}\mathscr{L}_{\left( 1+ \left\lvert \log r \right\rvert \right)^{-1}}^{2,n}\left(\Omega\right):= \left\lbrace f \in \mathscr{L}_{\left( 1+ \left\lvert \log r \right\rvert \right)^{-1}}^{2,n}\left(\Omega\right): \lim\limits_{r\rightarrow 0}\Theta^{f}\left(r\right)=0 \right\rbrace.  
		\end{align*}
	\end{definition}
	\begin{remark}\label{Remark about Mean Oscillation space properties}
		Let us now record a few facts about these spaces. 
		\begin{enumerate}[(i)]
			\item $\mathscr{L}_{\left( 1+ \left\lvert \log r \right\rvert \right)^{-1}}^{2,n}$ is Banach and $\mathrm{V}\mathscr{L}_{\left( 1+ \left\lvert \log r \right\rvert \right)^{-1}}^{2,n}$ is a proper subspace and
			\begin{align*}
				\mathrm{V}\mathscr{L}_{\left( 1+ \left\lvert \log r \right\rvert \right)^{-1}}^{2,n}\left(\Omega\right) = \overline{C^{\infty}\left(\overline{\Omega}\right)}^{	\left\lVert \cdot \right\rVert_{\mathscr{L}_{\left( 1+ \left\lvert \log r \right\rvert \right)^{-1}}^{2,n}\left(\Omega\right)}}. 
			\end{align*}
			Moreover, if $f \in L^{\infty}\left(\Omega\right)\cap \mathrm{V}\mathscr{L}_{\left( 1+ \left\lvert \log r \right\rvert \right)^{-1}}^{2,n}\left(\Omega\right),$ then there exists a sequence $\left\lbrace f_{s}\right\rbrace_{s \in \mathbb{N}} \subset C^{\infty}\left(\overline{\Omega}\right)$ such that 
			\begin{align*}
				\left\lVert f_{s}\right\rVert_{L^{\infty}\left(\Omega\right)} &\leq \left\lVert f \right\rVert_{L^{\infty}\left(\Omega\right)} &&\text{ for all } s \in \mathbb{N}, \\
				\Theta^{f_{s}}\left( \sigma \right) &\leq 	c \left( \Theta^{f}\left( \sigma \right) + \left\lVert f \right\rVert_{L^{\infty}\left(\Omega\right)}\sigma^{n} \right) &&\text{ for all } s \in \mathbb{N},
			\end{align*}
			for some constant $c=c\left(n\right)>0$ and 
			\begin{align*}
				\lim\limits_{s \rightarrow \infty} \left( \left\lVert f_{s} - f\right\rVert_{\mathscr{L}_{\left( 1+ \left\lvert \log r \right\rvert \right)^{-1}}^{2,n}\left(\Omega\right)} + \left\lVert f_{s} - f \right\rVert_{L^{p}\left(\Omega\right)}\right) = 0 \quad \text{ for all } 1\leq p < \infty.
			\end{align*}
			See Proposition 1.2 and Remark 1.5 in \cite{Acquistapace_linear_elliptic_discontinuous} for proofs.
			\item  The space of `multipliers' of $\mathrm{BMO}\left( \Omega\right)$ is $L^{\infty}\left(\Omega\right)\cap \mathscr{L}_{\left( 1+ \left\lvert \log r \right\rvert \right)^{-1}}^{2,n}\left(\Omega\right),$ i.e. 
			\begin{align*}
				\mathcal{M}\left( \mathrm{BMO}\left( \Omega\right)\right) \simeq L^{\infty}\left(\Omega\right)\cap \mathscr{L}_{\left( 1+ \left\lvert \log r \right\rvert \right)^{-1}}^{2,n}\left(\Omega\right)
			\end{align*}
			with equivalent norms. See Theorem 2 in \cite{Janson_BMO}. 
			\item The sets $C\left(\overline{\Omega}\right)\setminus \mathscr{L}_{\left( 1+ \left\lvert \log r \right\rvert \right)^{-1}}^{2,n}\left(\Omega\right)$ and $L^{\infty}\left(\Omega\right)\cap \mathrm{V}\mathscr{L}_{\left( 1+ \left\lvert \log r \right\rvert \right)^{-1}}^{2,n}\left(\Omega\right)\setminus C\left(\overline{\Omega}\right)$ are both non-empty. See Proposition 1.9 in \cite{Acquistapace_linear_elliptic_discontinuous}. 
			\item The set of Dini continuous functions in $\overline{\Omega}$ is a proper subset of $C\left(\overline{\Omega}\right)\cap \mathrm{V}\mathscr{L}_{\left( 1+ \left\lvert \log r \right\rvert \right)^{-1}}^{2,n}\left(\Omega\right).$ See Proposition 1.10 in \cite{Acquistapace_linear_elliptic_discontinuous}.
		\end{enumerate}
	\end{remark}
	\begin{proposition}[Proposition 1.13, \cite{Acquistapace_linear_elliptic_discontinuous}]\label{Lphi majorizes Np interior}
		Let $\Omega \subset \mathbb{R}^{n}$ be open and bounded and let $\Omega^{'} \subset \subset \Omega$ be open. Let $f \in \mathscr{L}_{\left( 1+ \left\lvert \log r \right\rvert \right)^{-1}}^{2,n}\left( \Omega\right).$ Then for any $1 \leq p < \infty,$ there exists a constant $C = C \left(n, p\right)>0$ such that for any $ 0 < \sigma_{0}< \operatorname{dist}\left( \Omega^{'}, \partial \Omega \right)/16,$ we have 
		\begin{align*}
			\left(1 + \left\lvert \log r \right\rvert\right)\left[ \fint_{B_{r}\left(x\right)}\left\lvert f - \left(f\right)_{B_{r}\left(x\right)}\right\rvert^{p}\right]^{\frac{1}{p}} \leq C \Theta^{f}\left( \sigma_{0}\right), 
		\end{align*}
		for all $0 < r< \sigma_{0}$ and for all $x \in \Omega^{'}.$ 
	\end{proposition}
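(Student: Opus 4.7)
The plan is to reduce the weighted $L^{p}$-Campanato bound to a weighted local $\mathrm{BMO}$ bound on $B_{r}(x)$, and then invoke the classical John-Nirenberg inequality on that fixed ball. First, I would fix $x \in \Omega'$ and $r \in (0,\sigma_{0})$; since $\sigma_{0} < \operatorname{dist}(\Omega',\partial\Omega)/16$, the ball $B_{r}(x)$ lies in $\Omega$, and without loss of generality I may assume $\sigma_{0} \leq 1/2$, so that $r < 1/2$ as well. The complementary range $r \in [1/2,\sigma_{0})$, if it arises, is trivial: the weight $1 + |\log r|$ is bounded there, and the required $L^{p}$ oscillation bound follows immediately from H\"{o}lder's inequality and the hypothesis.

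Next, for any sub-ball $B_{\rho'}(y) \subset B_{r}(x)$, H\"{o}lder's inequality together with the definition of $\Theta^{f}$ yields
$$\fint_{B_{\rho'}(y)}\bigl\lvert f - (f)_{B_{\rho'}(y)}\bigr\rvert\,\mathrm{d}z \;\leq\; \left[\fint_{B_{\rho'}(y)}\bigl\lvert f - (f)_{B_{\rho'}(y)}\bigr\rvert^{2}\,\mathrm{d}z\right]^{1/2} \;\leq\; \frac{\Theta^{f}(\sigma_{0})}{1 + |\log \rho'|}.$$
Because $\rho' \leq r < 1$, the elementary monotonicity $1 + |\log \rho'| \geq 1 + |\log r|$ lets me pull out the worst case to conclude
$$[f]_{\mathrm{BMO}(B_{r}(x))} \;:=\; \sup_{B_{\rho'}(y) \subset B_{r}(x)} \fint_{B_{\rho'}(y)}\bigl\lvert f - (f)_{B_{\rho'}(y)}\bigr\rvert \;\leq\; \frac{\Theta^{f}(\sigma_{0})}{1 + |\log r|}.$$

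Finally, I would apply the $L^{p}$ form of the John-Nirenberg inequality on the single ball $B_{r}(x)$, which provides a constant $C_{p} = C_{p}(n,p) > 0$ such that
$$\left[\fint_{B_{r}(x)}\bigl\lvert f - (f)_{B_{r}(x)}\bigr\rvert^{p}\,\mathrm{d}z\right]^{1/p} \;\leq\; C_{p}\,[f]_{\mathrm{BMO}(B_{r}(x))}.$$
Chaining with the previous display and multiplying through by $1 + |\log r|$ produces the desired inequality with constant $C = C_{p}$. No step is genuinely delicate: the only technical point is the monotonicity of $\rho \mapsto 1 + |\log \rho|$ on $(0,1)$, which is handled by the harmless reduction $\sigma_{0} \leq 1/2$. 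The argument is purely local; the role of $\Omega'$ is only to guarantee $B_{r}(x) \subset \Omega$ so that $\Omega_{\rho'}(y) = B_{\rho'}(y)$ for all relevant sub-balls and the definition of $\Theta^{f}$ applies without boundary truncation.
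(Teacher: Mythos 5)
Your argument is correct and is the standard route: the weighted $L^{2}$-Campanato hypothesis, combined with the monotonicity of $\rho \mapsto 1+|\log\rho|$ on $(0,1)$, gives $[f]_{\mathrm{BMO}(B_{r}(x))} \leq \Theta^{f}(\sigma_{0})/(1+|\log r|)$, and the $L^{p}$ form of John--Nirenberg on the single ball $B_{r}(x)$ then upgrades to the $L^{p}$ oscillation with a dimensional constant. The paper itself offers no proof (it cites Acquistapace's Proposition 1.13 directly), so there is nothing internal to compare against, but this is almost certainly the intended argument.

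Two small points of hygiene. First, your dismissal of the range $r \in [1/2,\sigma_{0})$ as following ``immediately from H\"older's inequality and the hypothesis'' is accurate only for $p \leq 2$; for $p>2$ one must still pass through John--Nirenberg on $B_{r}(x)$ in that range, which is harmless since the weight $1+|\log r|$ is bounded there and $[f]_{\mathrm{BMO}(B_{r}(x))} \leq \Theta^{f}(\sigma_{0})$ holds with no logarithmic gain because $1+|\log\rho'|\geq 1$ always. Second, as transcribed in this paper the proposition places no absolute cap on $\sigma_{0}$ (contrast Proposition~\ref{integral average by bmo norm interior}, which requires $\sigma_{0} < 2$), so strictly speaking your reduction to $\sigma_{0}\leq 1/2$ via monotonicity of $\Theta^{f}$ must be accompanied by the complementary-range argument just described; for $r>1$ the factor $1+|\log r|$ is unbounded, so the statement really does implicitly assume a normalization of $\sigma_{0}$ as in Acquistapace's original. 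Neither point affects the applications in the paper, where $\sigma_{0}$ is always taken small.
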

	\begin{proposition}[Proposition 1.14, \cite{Acquistapace_linear_elliptic_discontinuous}]\label{Lphi majorizes Np boundary}
		Let $x_{0} \in \partial\mathbb{R}^{n}_{+}$ and $R >0.$ Let $f \in \mathscr{L}_{\left( 1+ \left\lvert \log r \right\rvert \right)^{-1}}^{2,n}\left( B_{R}^{+}\left(x_{0}\right)\right).$ Then for any $1 \leq p < \infty,$ there exists a constant $C = C \left(n, p\right)>0$ such that  for any $ 0 < \sigma_{0}< R/16,$ we have the following two estimates. 
		\begin{enumerate}[(i)]
			\item For all $0 < r< \sigma_{0}$ and for all $x \in B_{R/2}^{+}\left(x_{0}\right)$ such that $B_{r}\left(x\right) \subset \subset B_{R}^{+}\left(x_{0}\right),$ we have 
			\begin{align*}
				\left(1 + \left\lvert \log r \right\rvert\right)\left[ \fint_{B_{r}\left(x\right)}\left\lvert f - \left(f\right)_{B_{r}\left(x\right)}\right\rvert^{p}\right]^{\frac{1}{p}} \leq C \Theta^{f}\left( \sigma_{0}\right). 
			\end{align*}
			
			\item For all $0 < r< \sigma_{0}$ and for all $x \in \Gamma_{R/2}\left(x_{0}\right),$ we have \begin{align*}
				\left(1 + \left\lvert \log r \right\rvert\right)\left[ \fint_{B^{+}_{r}\left(x\right)}\left\lvert f - \left(f\right)_{B^{+}_{r}\left(x\right)}\right\rvert^{p}\right]^{\frac{1}{p}} \leq C \Theta^{f}\left( \sigma_{0}\right).
			\end{align*}
		\end{enumerate}
	\end{proposition}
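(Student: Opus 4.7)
The plan is to reduce to $p=2$ when $1\leq p \leq 2$, and to use a John--Nirenberg-type argument to upgrade to $p>2$. The $p=2$ case should fall out of the definition of $\Theta^f$ after checking the geometry, and the higher integrability is a local BMO estimate in disguise.

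First I would verify that the hypothesis $r<\sigma_0<R/16$ forces the localisation $\Omega_r(x):=B_r(x)\cap B_R^+(x_0)$ to coincide with the set appearing in each assertion: in case (i), $B_r(x)\subset\subset B_R^+(x_0)$ gives $\Omega_r(x)=B_r(x)$ directly, while in case (ii), the fact that $x\in\Gamma_{R/2}(x_0)$ together with $r<R/2$ gives $B_r(x)\subset B_R(x_0)$ and hence $\Omega_r(x)=B_r^+(x)$. Once this identification is in place, the $p=2$ inequality is literally the defining bound of $\Theta^f$. The range $1\leq p<2$ is then immediate from H\"older.

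For $p>2$, writing $B$ for $B_r(x)$ (case i) or $B_r^+(x)$ (case ii), the key intermediate step is the local BMO bound
\[
[f]_{\mathrm{BMO}(B)} \leq \frac{C\,\Theta^f(\sigma_0)}{1+|\log r|}.
\]
The justification is that every admissible sub-``ball'' at some scale $\rho\leq r$ — either a full ball interior to $B$ or, in case (ii), a half-ball with flat side on $\Gamma_r(x)$ — coincides with the corresponding $\Omega_\rho(y)$, so by Cauchy--Schwarz and the definition of $\Theta^f$ its $L^1$ oscillation is bounded by $\Theta^f(\rho)/(1+|\log \rho|)$; monotonicity of $\rho\mapsto (1+|\log\rho|)^{-1}$ on $(0,1)$ replaces $\rho$ by $r$ in the denominator. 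The standard John--Nirenberg inequality on the Lipschitz domain $B$ then upgrades this BMO bound to an $L^p$ oscillation bound,
\[
\left[\fint_B |f-(f)_B|^p\right]^{1/p} \leq C_p\,[f]_{\mathrm{BMO}(B)} \leq \frac{C_p\,\Theta^f(\sigma_0)}{1+|\log r|},
\]
and multiplying by $1+|\log r|$ completes the proof.

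The main obstacle is to justify the BMO seminorm bound for \emph{arbitrary} intersections $B_\rho(y)\cap B$, not merely the admissible ones above, since the standard John--Nirenberg formulation takes a supremum over all such intersections. In case (ii) this can be handled either by decomposing a generic intersection into admissible pieces using the Lipschitz geometry of $\partial B_r^+(x)$, or, more cleanly, by reflecting $f$ evenly across $\Gamma_r(x)$ and invoking the interior result, Proposition~\ref{Lphi majorizes Np interior}, on the resulting full ball. A minor technical point is that the monotonicity of $1/(1+|\log r|)$ was used in the regime $r<1$; the residual regime $r\geq 1$ (possible only when $R/16>1$) is handled separately, since there the weight $1+|\log r|$ is bounded by a constant depending only on $\sigma_0$, and the claim reduces to a scale-free John--Nirenberg estimate.
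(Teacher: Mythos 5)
The paper does not prove this proposition at all: it is imported verbatim as Proposition 1.14 of \cite{Acquistapace_linear_elliptic_discontinuous}, so there is no internal proof to compare against, and your argument has to stand on its own. In substance it does. The identification of $\Omega_r(x)$ with $B_r(x)$ (case (i)) resp. $B_r^+(x)$ (case (ii)), the trivial range $1\leq p\leq 2$ via H\"older, and the upgrade to $p>2$ by bounding $\left[f\right]_{\mathrm{BMO}(B)}$ by $\Theta^{f}(\sigma_0)/\left(1+\left\lvert \log r\right\rvert\right)$ and then invoking John--Nirenberg is a correct and natural reconstruction of Acquistapace's statement. Two refinements are worth recording. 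First, the obstacle you worry about (arbitrary intersections $B_\rho(y)\cap B$) can be sidestepped entirely: John--Nirenberg holds on John domains with the seminorm taken only over balls \emph{contained} in the domain, and balls and half-balls are John domains with purely dimensional constants; every ball contained in $B$ is also contained in $B_R^{+}(x_0)$, hence is an admissible $\Omega_\rho(y)$, so your oscillation bound applies verbatim and no decomposition is needed. If you instead use even reflection in case (ii), note that controlling the extension on a ball meeting the flat face requires the oscillation of $f$ on a half-ball of radius about $2\rho$, so you a priori obtain $\Theta^{f}(2\sigma_0)$ rather than $\Theta^{f}(\sigma_0)$; this needs a one-line repair (a doubling/chaining comparison of averages, or applying the reflection only up to scale $r/2$), and applying Proposition \ref{Lphi majorizes Np interior} to the extension also requires arranging the quantitative gap $\sigma_0<\operatorname{dist}(\Omega',\partial\Omega)/16$, so the direct John-domain route is cleaner. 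Second, your treatment of the regime $r\geq 1$ yields a constant depending on $\sigma_0$ (hence on $R$), which falls short of the stated $C=C(n,p)$; in fact a uniform constant cannot be expected at large scales via this weight, so some $R$-dependence (or a restriction such as $R\leq 1$) is intrinsic there. This is immaterial for the present paper, where the proposition is only invoked with $R<1$, so that $r<\sigma_0<1/16$ and your main argument covers all cases actually used.
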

	The spaces we defined here, along with their claimed properties, extend componentwise to functions that take values in finite dimensional vector spaces. 	
	\begin{lemma}\label{inverse also elliptic and same space}
		Let $A \in L^{\infty} \cap \mathrm{V}\mathscr{L}_{\left( 1+ \left\lvert \log r \right\rvert \right)^{-1}}^{2,n}\left( \Omega; \mathcal{L}\left( \varLambda^{k+1}; \varLambda^{k+1}\right)\right)$ is uniformly Legendre-elliptic. Define $A^{-1}:\Omega \rightarrow \mathcal{L}\left( \varLambda^{k+1}; \varLambda^{k+1}\right)$  by 
		$ A^{-1}\left(x \right) = \left[ A\left(x\right)\right]^{-1}$  for a.e. $x \in \Omega.$ Then 
		$A^{-1} \in L^{\infty} \cap \mathrm{V}\mathscr{L}_{\left( 1+ \left\lvert \log r \right\rvert \right)^{-1}}^{2,n}$ is uniformly Legendre elliptic. 
	\end{lemma}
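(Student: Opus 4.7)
The plan is to prove the two claimed properties of $A^{-1}$ separately, both reducing to pointwise linear algebra combined with a classical averaging trick.

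First I would establish uniform Legendre ellipticity and boundedness of $A^{-1}$. Pointwise Legendre ellipticity of $A(x)$ with constant $\gamma$ forces $A(x)$ to be injective, hence bijective on the finite-dimensional fiber $\varLambda^{k+1}$, so $A^{-1}(x)$ is well-defined for a.e.\ $x \in \Omega$. Writing $\eta := A^{-1}(x)\xi$ and using $\gamma|\eta|^2 \leq \langle A(x)\eta; \eta\rangle = \langle \xi; \eta\rangle \leq |\xi||\eta|$ yields $|A^{-1}(x)\xi| \leq \gamma^{-1}|\xi|$, hence $\|A^{-1}\|_{L^\infty} \leq \gamma^{-1}$. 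Substituting back,
\begin{align*}
\langle A^{-1}(x)\xi; \xi\rangle = \langle \eta; A(x)\eta\rangle \geq \gamma|\eta|^2 \geq \gamma\|A\|_{L^\infty}^{-2}|\xi|^2,
\end{align*}
so $A^{-1}$ is uniformly Legendre-elliptic with constant $\gamma\|A\|_{L^\infty}^{-2}$.

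For the $\mathrm{V}\mathscr{L}$-membership, the key ingredient is the algebraic identity
\begin{align*}
A^{-1}(y) - A^{-1}(z) = A^{-1}(y)\bigl[A(z) - A(y)\bigr]A^{-1}(z),
\end{align*}
which, by submultiplicativity of the Frobenius norm on $\mathcal{L}(\varLambda^{k+1};\varLambda^{k+1})$ combined with the $L^\infty$ bound just derived, gives the pointwise estimate $|A^{-1}(y) - A^{-1}(z)| \leq \gamma^{-2}|A(y) - A(z)|$. Next I would apply Jensen's inequality in the form $|A^{-1}(y) - (A^{-1})_{\Omega_\rho(x)}|^2 \leq \fint_{\Omega_\rho(x)} |A^{-1}(y) - A^{-1}(z)|^2\,\mathrm{d}z$, integrate in $y$, and expand the resulting double average by the standard polarization identity $\fint\fint |F(y)-F(z)|^2\,\mathrm{d}z\,\mathrm{d}y = 2\fint |F - (F)_{\Omega_\rho(x)}|^2$, to obtain
\begin{align*}
\fint_{\Omega_\rho(x)} \bigl|A^{-1} - (A^{-1})_{\Omega_\rho(x)}\bigr|^2 \leq 2\gamma^{-4}\fint_{\Omega_\rho(x)} \bigl|A - (A)_{\Omega_\rho(x)}\bigr|^2.
\end{align*}

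Multiplying by $(1+|\log\rho|)^2$ and taking the supremum over $0 < \rho \leq r$ and $x \in \Omega$ then yields $\Theta^{A^{-1}}(r) \leq \sqrt{2}\,\gamma^{-2}\Theta^{A}(r)$. In particular $A^{-1} \in \mathscr{L}_{(1+|\log r|)^{-1}}^{2,n}$, and since by hypothesis $\Theta^A(r) \to 0$ as $r \to 0$, the same holds for $\Theta^{A^{-1}}(r)$, placing $A^{-1}$ in $\mathrm{V}\mathscr{L}_{(1+|\log r|)^{-1}}^{2,n}$. No step presents real difficulty; the mild technical point is keeping track of the correct matrix norm so that the submultiplicative inequality used in the pointwise bound is justified, but this is routine since all norms on the finite-dimensional space of linear maps are equivalent.
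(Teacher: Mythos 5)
Your proof is correct and, at its core, uses the same algebraic ingredient as the paper: the resolvent-type identity $M^{-1}-N^{-1}=M^{-1}(N-M)N^{-1}$ together with the uniform bound $\|A^{-1}\|_{L^\infty}\leq\gamma^{-1}$. The only place you diverge is in how the mean-oscillation estimate is completed. The paper compares $A^{-1}(y)$ to the single constant $\left(\left(A\right)_{\Omega_\rho(x)}\right)^{-1}$ (using that the integral average of $A$ inherits Legendre ellipticity and hence is invertible) and then invokes the $L^2$-minimality of the integral average to pass from $\fint|A^{-1}-\left(\left(A\right)_{\Omega_\rho}\right)^{-1}|^2$ to $\fint|A^{-1}-(A^{-1})_{\Omega_\rho}|^2$. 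You instead bound the oscillation by the symmetric double average $\fint\fint|A^{-1}(y)-A^{-1}(z)|^2$ via Jensen, apply the pointwise resolvent estimate to each pair, and then collapse the double average with the polarization identity. Both are standard and equally elementary; your route has the mild advantage that you never need to observe that $(A)_{\Omega_\rho}$ is itself invertible, while the paper's route is a line shorter since it avoids the double integral. The explicit verification of boundedness and Legendre ellipticity of $A^{-1}$ that you carry out is also fine (the paper just appeals to equivalence of norms in finite dimensions). No gaps.
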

	\begin{proof}
		By equivalence of norms on finite dimensional spaces, $A^{-1}$ is bounded and uniformly Legendre elliptic. Note the estimate in the operator norms 
		\begin{align*}
			\left\lVert A^{-1}\left(x\right) - \left( A\right)_{\Omega_{\rho}\left(x_{0}\right)}^{-1} \right\rVert_{\mathrm{op}} \leq \left\lVert A^{-1}\left(x\right)\right\rVert_{\mathrm{op}}\left\lVert A\left(x\right) - \left( A\right)_{\Omega_{\rho}\left(x_{0}\right)} \right\rVert_{\mathrm{op}}\left\lVert \left( A\right)_{\Omega_{\rho}\left(x_{0}\right)}^{-1}\right\rVert_{\mathrm{op}}.
		\end{align*}
		From this, by minimality of integral averages and again equivalence of norms, for any $x \in \Omega$ and any $\rho >0,$ we have 
		\begin{align*}
			\fint_{\Omega_{\rho}\left(x\right)} \left\lvert A^{-1} \left(y \right) - \left( A^{-1} \right)_{\Omega_{\rho}\left(x\right)}\right\rvert^{2}\ \mathrm{d} y &\leq 	\fint_{\Omega_{\rho}\left(x\right)} \left\lvert A^{-1} \left(y \right) - \left( A \right)_{\Omega_{\rho}\left(x\right)}^{-1}\right\rvert^{2}\ \mathrm{d} y \\
			&\leq \frac{c}{\gamma^{4}}	\fint_{\Omega_{\rho}\left(x\right)} \left\lvert A \left(y \right) - \left( A \right)_{\Omega_{\rho}\left(x\right)}\right\rvert^{2}\ \mathrm{d} y. 
		\end{align*}
		The claimed result follows. 
	\end{proof}
	The following lemma is easy to establish ( see \cite{Campanato_ellipticsystem_campanatospace} ).  
	\begin{lemma}\label{composition with diffeo and product with determinant}
		Let $\Omega_{1}, \Omega_{2} \subset \mathbb{R}^{n}$ be bounded open subsets and let $\Phi : \overline{\Omega_{1}} \rightarrow \overline{\Omega_{2}}$ be a $C^{2}$ diffeomorphism. Let $A \in L^{\infty} \cap \mathrm{V}\mathscr{L}_{\left( 1+ \left\lvert \log r \right\rvert \right)^{-1}}^{2,n}\left( \Omega_{2}; \mathcal{L}\left( \varLambda^{k+1}; \varLambda^{k+1}\right)\right).$ Define 
		\begin{align*}
			\Tilde{A}\left( y\right)&=\left\lvert \operatorname{det} D\Phi(y) \right\rvert A \left( \Phi(y)\right) &&\text{ for a.e. } y \in \Omega_{1}.
		\end{align*}Then $\Tilde{A} \in L^{\infty} \cap \mathrm{V}\mathscr{L}_{\left( 1+ \left\lvert \log r \right\rvert \right)^{-1}}^{2,n}\left( \Omega_{1}; \mathcal{L}\left( \varLambda^{k+1}; \varLambda^{k+1}\right)\right) $ and there exist  constants $c_{0}, c_{1}>0$, depending only on $\Phi,n, k, N,$ such that  we have the following estimate:
		\begin{align*}
			\Theta^{\tilde{A}}\left(\sigma \right) \leq c_{0}\left( \Theta^{A}\left( c_{1}\sigma \right) + \theta \left(\sigma\right)\left\lVert A \right\rVert_{L^{\infty}\left(\Omega_{2}\right)} \right) &&\text{ for all } \sigma >0. 
		\end{align*}
	\end{lemma}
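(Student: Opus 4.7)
The plan is to estimate the mean oscillation of $\tilde{A}$ on an arbitrary cap $\Omega_{1,\rho}(y_{0})$ by transferring the oscillation of $A$ onto the image cap via change of variables, with the variable Jacobian factor treated as a Lipschitz perturbation. Write $\tilde{A}(y)=J(y)\,A(\Phi(y))$ where $J(y):=\left\lvert \operatorname{det} D\Phi(y)\right\rvert$. Since $\Phi \in C^{2}(\overline{\Omega_{1}})$ is a diffeomorphism, $J \in C^{1}(\overline{\Omega_{1}})$ is Lipschitz and bounded away from $0$ and $\infty$; let $L:=\lVert\nabla J\rVert_{L^{\infty}}$ and $M:=\lVert J\rVert_{L^{\infty}}$. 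The Lipschitz constants of $\Phi,\Phi^{-1}$ are also finite; set $c_{1}:=\mathrm{Lip}(\Phi)$, so that $\Phi(\Omega_{1,\rho}(y_{0})) \subset \Omega_{2,c_{1}\rho}(\Phi(y_{0}))$ for every $y_{0}\in\overline{\Omega_{1}}$ and $\rho>0$, and the volumes $\lvert \Omega_{1,\rho}(y_{0}) \rvert$ and $\lvert \Omega_{2,c_{1}\rho}(\Phi(y_{0})) \rvert$ are comparable (both behave like $\rho^{n}$, with ratio controlled by $\Phi$ alone, using that both $\Omega_{i}$ are Lipschitz).

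Fix $y_{0}\in\overline{\Omega_{1}}$, set $x_{0}=\Phi(y_{0})$, and let $c:= J(y_{0})\,(A)_{\Omega_{2,c_{1}\rho}(x_{0})}$. By minimality of integral averages, I write
\begin{align*}
\tilde{A}(y)-c=[J(y)-J(y_{0})]\,A(\Phi(y))+J(y_{0})\bigl[A(\Phi(y))-(A)_{\Omega_{2,c_{1}\rho}(x_{0})}\bigr],
\end{align*}
and split the $L^{2}$ average on $\Omega_{1,\rho}(y_{0})$ into two pieces by the triangle inequality. The first piece is pointwise bounded by $L\rho\,\lVert A\rVert_{L^{\infty}}$ and therefore contributes at most $L\rho\,\lVert A\rVert_{L^{\infty}}$ after averaging. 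The second piece is handled by the change of variables $x=\Phi(y)$:
\begin{align*}
\fint_{\Omega_{1,\rho}(y_{0})}\bigl\lvert A(\Phi(y))-(A)_{\Omega_{2,c_{1}\rho}(x_{0})}\bigr\rvert^{2}\,dy \leq \frac{C\,\lvert\Omega_{2,c_{1}\rho}(x_{0})\rvert}{\lvert\Omega_{1,\rho}(y_{0})\rvert}\fint_{\Omega_{2,c_{1}\rho}(x_{0})}\bigl\lvert A-(A)_{\Omega_{2,c_{1}\rho}(x_{0})}\bigr\rvert^{2}\,dx,
\end{align*}
with $C$ depending only on $\lVert \det D\Phi^{-1}\rVert_{L^{\infty}}$, and the volume ratio uniformly bounded by a constant depending only on $\Phi$.

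Multiplying the square root of the resulting bound by $(1+\lvert\log\rho\rvert)$ and noting that $(1+\lvert\log\rho\rvert)$ and $(1+\lvert\log(c_{1}\rho)\rvert)$ are equivalent up to a multiplicative constant depending on $c_{1}$, I obtain
\begin{align*}
(1+\lvert\log\rho\rvert)\left[\fint_{\Omega_{1,\rho}(y_{0})}\lvert\tilde{A}-(\tilde{A})_{\Omega_{1,\rho}(y_{0})}\rvert^{2}\right]^{1/2}\leq c_{0}\,\bigl(\Theta^{A}(c_{1}\rho)+\rho(1+\lvert\log\rho\rvert)\lVert A\rVert_{L^{\infty}}\bigr),
\end{align*}
uniformly in $y_{0}$ and in $\rho$. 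Taking the supremum over $0<\rho\le\sigma$ and recognizing $\sup_{\rho\le\sigma}\rho(1+\lvert\log\rho\rvert)=\theta(\sigma)$ yields the claimed estimate. Finally, $\tilde{A}\in \mathrm{V}\mathscr{L}_{(1+\lvert\log r\rvert)^{-1}}^{2,n}$ is immediate from the estimate since $\theta(\sigma)\to 0$ and $\Theta^{A}(c_{1}\sigma)\to 0$ as $\sigma\to 0$, the latter by hypothesis on $A$.

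The main technical delicacy is uniformity in $y_{0}$ when $y_{0}$ lies near $\partial\Omega_{1}$: one must verify that $\Phi(\Omega_{1,\rho}(y_{0}))\subset\Omega_{2,c_{1}\rho}(x_{0})$ and that the volume ratio $\lvert\Omega_{2,c_{1}\rho}(x_{0})\rvert/\lvert\Omega_{1,\rho}(y_{0})\rvert$ is bounded uniformly up to the boundary; both follow from the Lipschitz-diffeomorphism property of $\Phi$ and the Lipschitz regularity of $\partial\Omega_{i}$ (indeed $C^{2}$), with explicit constants depending only on $\Phi$, $n$, $k$, and $N$. The $L^{\infty}$ bound on $\tilde{A}$ and its uniform Legendre character if applicable are clear from the pointwise formula and boundedness of $J$.
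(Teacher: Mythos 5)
The paper does not actually prove this lemma — it states ``easy to establish'' and cites Campanato — so there is no paper proof to compare against. Your argument is correct and is the natural one: the decomposition
\begin{align*}
\tilde{A}(y)-J(y_{0})\,(A)_{\Omega_{2,c_{1}\rho}(x_{0})}=[J(y)-J(y_{0})]\,A(\Phi(y))+J(y_{0})\bigl[A(\Phi(y))-(A)_{\Omega_{2,c_{1}\rho}(x_{0})}\bigr]
\end{align*}
cleanly separates the contribution of the $C^{1}$ Jacobian factor (absorbed into the $\theta(\sigma)\lVert A\rVert_{L^{\infty}}$ term via Lipschitz continuity) from the oscillation of $A$ itself (absorbed into $\Theta^{A}(c_{1}\sigma)$ via change of variables, the inclusion $\Phi(\Omega_{1,\rho}(y_{0}))\subset\Omega_{2,c_{1}\rho}(x_{0})$, and the minimality of the mean); the comparability of $1+\lvert\log\rho\rvert$ and $1+\lvert\log(c_{1}\rho)\rvert$ and the identity $\sup_{0<\rho\le\sigma}\rho(1+\lvert\log\rho\rvert)=\theta(\sigma)$ then close the estimate. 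The one point worth making explicit is that the uniform volume comparability $\lvert\Omega_{1,\rho}(y_{0})\rvert\sim\rho^{n}$ needs some boundary regularity of $\Omega_{1}$ (the lemma as stated only says ``bounded open''), but since the space $\mathscr{L}_{(1+\lvert\log r\rvert)^{-1}}^{2,n}$ is defined in the paper only for Lipschitz domains, and the lemma is only applied to a half-ball and a $C^{2}$ domain, this is an implicit hypothesis you correctly identify rather than a gap in your argument.
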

	
	\section{Crucial estimates}\label{Boundary BMO linear}
	Our goal in this section is to prove the main estimates. 
	\subsection{Boundary estimates}
	\begin{lemma}[Boundary estimate]\label{boundary estimate lemma}
		Let $\partial\Omega$ be $C^{2}$ and let $F \in L^{2}\left(\Omega;\varLambda^{k+1}\right).$ Let 
		$A \in C^{\infty}\left( \overline{\Omega}; \mathcal{L}\left(\varLambda^{k+1}; \varLambda^{k+1} \right)\right)$ be uniformly Legendre elliptic with ellipticity constant $\gamma >0$. Let $u\in W^{1,2}_{d^{\ast}, T}\left(\Omega;\varLambda^k\right)$  satisfy
		\begin{align}\label{omegaequation boundary estimate}
			\int_{\Omega}\langle A(x)du;d\phi\rangle =\int_{\Omega}\langle F;d\phi\rangle \quad \text{ for all } \phi\in W^{1,2}_{T}\left(\Omega;\varLambda^k\right).
		\end{align} 
		Then for every $x_0\in\partial\Omega,$ there exists  $0 < R = R \left(x_{0}, n, \Omega \right) < 1,$
		a neighborhood $W$ of $x_0$ in $\mathbb{R}^n$ and an admissible boundary coordinate system $\Phi\in \operatorname{Diff}^{2}(\overline{B_R};\overline{W})$, such that 
		\begin{align*}
			\Phi(0)=x_0,  D\Phi(0) \in \mathbb{SO}\left(n\right), \Phi(B^{+}_{R})=\Omega\cap W  \text{ and }  \Phi(\Gamma_R)=\partial\Omega\cap W, 
		\end{align*} and a constants $C, c_{1}>0,$ depending only on $x_{0}, n, k, N, \gamma, \Omega,$ such that we have 
		\begin{align}\label{u boundary estimate}
			\left[ \nabla u \right]&_{\mathrm{BMO}\left( \Phi \left( B^{+}_{R/2}\right)\right)}^{2} \notag \\&\leq C \left\lbrace \left(   \left[ \Theta^{A}\left( 2c_{1}\sigma\right)\right]^{2} + \left[ \theta \left( \sigma\right)\right]^{2}\left( \left\lVert A \right\rVert_{L^{\infty}}^{2} + 1 \right)\right) \left[ \nabla u \right]_{\mathrm{BMO}\left(\Omega\right)}^{2} + \sigma^{-n}\kappa \right\rbrace , 
		\end{align}
		for every $ 0< \sigma < R/8,$   where 
		\begin{multline*}
			\kappa := \left(\left\lVert A \right\rVert_{L^{\infty}\left( \Omega\right)}^{2} + 	\left[ A\right]_{\mathscr{L}^{2,n}_{\frac{1}{1+ \left\lvert \log r \right\rvert}}\left(\Omega\right)}^{2} + 1 \right)\left\lVert \nabla u  \right\rVert_{L^{2}\left(\Omega\right)}^{2} + \left\lVert u  \right\rVert_{L^{2}\left(\Omega\right)}^{2} \\ + \left\lVert F \right\rVert_{L^{2}\left(\Omega\right)}^{2} + \left[ F\right]_{\mathrm{BMO}\left(\Omega\right)}^{2}. 
		\end{multline*}
	\end{lemma}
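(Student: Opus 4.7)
The plan is to flatten the boundary near $x_0$ and then run a Campanato-type freezing argument on the resulting half-ball, carefully managing the gauge freedom and the extra second-order term produced by the change of variables. Choose an admissible $C^2$ chart $\Phi : \overline{B_R} \to \overline W$ with $\Phi(0)=x_0$, $D\Phi(0)\in \mathrm{SO}(n)$, $\Phi(B_R^+)=W\cap\Omega$, $\Phi(\Gamma_R)=W\cap\partial\Omega$, and set $v:=\Phi^* u$. Since $d$ commutes with pullback but $d^*$ does not, the pulled-back equation reads
\begin{align*}
\int_{B_R^+}\!\langle \tilde A(y)\,dv;d\psi\rangle + \int_{B_R^+}\!\langle S(y)\nabla v;\nabla\psi\rangle = \int_{B_R^+}\!\langle \tilde F;d\psi\rangle,
\end{align*}
where $\tilde A(y)=|\det D\Phi(y)|\Phi^\#(A\circ\Phi)$ and $S$ is a smooth tensor built from $D\Phi$ and $D^2\Phi$ that vanishes when $D\Phi$ is an isometry. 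Lemma \ref{composition with diffeo and product with determinant} controls $\Theta^{\tilde A}(\sigma)\le c_0(\Theta^A(c_1\sigma)+\theta(\sigma)\|A\|_{L^\infty})$, so the tangential mean-oscillation of $\tilde A$ is as good as that of $A$. The equation is valid against the test class one wants, because Proposition \ref{weak formulation change linear lemma} (applied locally after suitable cutoff) lets us move freely among $W^{1,2}_{d^\ast,T}$, $W^{1,2}_0$, and $W^{1,2}_T$ test spaces.

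For the main Campanato step, fix $x_0\in\Gamma_{R/2}$ and a radius $0<\rho<\sigma$. On the half-ball $B_\rho^+(x_0)$ decompose $v=w+z$, where $w$ solves the frozen, constant-coefficient half-ball problem
\begin{align*}
\int\langle (\tilde A)_{B_\rho^+(x_0)}\,dw;d\varphi\rangle=\int\langle (\tilde F)_{B_\rho^+(x_0)};d\varphi\rangle,\qquad w\big|_{\partial B_\rho^+}\!=v\big|_{\partial B_\rho^+}
\end{align*}
in an appropriately gauge-fixed class (so that $e_n\wedge z=0$ on the flat part of $\partial B_\rho^+$ and $z=0$ on the curved part, placing $z$ in $W^{1,2}_{T,\mathrm{flat}}$). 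Because $(\tilde A)_{B_\rho^+(x_0)}$ is a constant Legendre-elliptic tensor, the standard half-ball regularity for the rotated Hodge--Maxwell system with constant coefficients gives decay of the mean oscillation of $\nabla w$ on concentric half-balls. For the corrector $z$, testing the equation for $z$ against $z$ itself and applying Proposition \ref{poincare gaffney} together with ellipticity yields
\begin{align*}
\|\nabla z\|_{L^2(B_\rho^+)}^2 \lesssim \|(\tilde A-(\tilde A)_{B_\rho^+})\,dv\|_{L^2}^2 + \|\tilde F-(\tilde F)_{B_\rho^+}\|_{L^2}^2 + \|S\,\nabla v\|_{L^2}^2.
\end{align*}

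The first term on the right is the place where the multiplier structure of $\mathscr{L}^{2,n}_{(1+|\log r|)^{-1}}$ is used: H\"older's inequality combined with Proposition \ref{Lphi majorizes Np boundary} and Proposition \ref{integral average by bmo norm boundary} bounds $\fint |(\tilde A-(\tilde A)_{B_\rho^+})\nabla v|^2$ by $[\Theta^{\tilde A}(2c_1\sigma)]^2[\nabla u]_{\mathrm{BMO}}^2$ up to lower-order $L^2$ terms, the logarithmic factor from $|(\nabla v)_{B_\rho^+}|$ being absorbed by the weight in $\Theta^{\tilde A}$. The term $\|S\nabla v\|_{L^2}^2$ produces exactly the $\theta(\sigma)^2(\|A\|_{L^\infty}^2+1)[\nabla u]_{\mathrm{BMO}}^2$ contribution, since $|S|\lesssim \sigma$ on $B_\sigma^+$ and the same boundary mean estimate for $\nabla v$ applies; the BMO term for $\tilde F$ follows directly. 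Combining with the decay for $\nabla w$, taking supremum over $\rho\in(0,\sigma)$ and $x_0\in\Gamma_{R/2}$, and adding the analogous interior estimate for balls $B_r(x)\subset B_R^+$ with $x\in B_{R/2}^+$ (where Propositions \ref{Lphi majorizes Np interior} and \ref{integral average by bmo norm interior} replace their boundary counterparts) gives \eqref{u boundary estimate}.

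The main obstacle is the simultaneous handling of the two new difficulties flagged in the introduction: the gauge-invariance of the second-order operator $d^*(A\,du)$, which forces us to shift weak formulations (via Proposition \ref{weak formulation change linear lemma}) when defining $w$ and when testing with $z$ so that Poincar\'e--Gaffney is actually available, and the extraneous term $\operatorname{div}(S\nabla v)$ from flattening, which must be absorbed into the $\theta(\sigma)^2\|A\|_{L^\infty}^2$ contribution without invoking any smoothness of $A$ (only smoothness of the geometric chart $\Phi$). The remainder is bookkeeping of the logarithmic weights so that $\Theta^{\tilde A}$ appears squared, matching the statement.
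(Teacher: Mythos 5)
Your overall architecture (flatten, Campanato freezing, boundary/interior split, use of $\mathscr{L}^{2,n}_{(1+|\log r|)^{-1}}$ via Propositions \ref{Lphi majorizes Np interior}--\ref{integral average by bmo norm boundary}, iteration lemma) matches the paper, but there is a genuine gap in the very first step that propagates through the whole argument. Your pulled-back equation
\begin{align*}
\int_{B_R^+}\langle \tilde A(y)\,dv;d\psi\rangle + \int_{B_R^+}\langle S(y)\nabla v;\nabla\psi\rangle = \int_{B_R^+}\langle \tilde F;d\psi\rangle
\end{align*}
is missing the crucial term $\int_{B_R^+}\langle d^{\ast}v;\,d^{\ast}\psi\rangle$ (and the lower-order $P,Q,R$ terms that the flattening produces). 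This term is not an artifact of the change of variables: it comes from the deliberate step of adding $\int_\Omega\langle d^{\ast}u;d^{\ast}\phi\rangle$, which is identically zero because $u\in W^{1,2}_{d^{\ast},T}$, to the original weak form \emph{before} flattening. After the pullback $d^{\ast}v$ is no longer zero, so the added term is a genuine part of the equation on $B_R^+$. Its purpose is to restore ellipticity: the operator $d^{\ast}(\tilde A\,d\,\cdot)$ alone has an infinite-dimensional kernel, and the paper's whole point (and your own remark about gauge invariance) is that this must be cured before any freezing or energy estimate can work.

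Concretely, this sinks your corrector estimate. You test the equation for $z$ against $z$ and claim control of $\|\nabla z\|_{L^2}$, invoking Proposition \ref{poincare gaffney}. But with only the $d$-part present, testing gives at most $\|dz\|_{L^2}$, and Proposition \ref{poincare gaffney} requires $d^{\ast}z=0$ --- a constraint you never impose, and which your boundary condition $z\in W^{1,2}_{T,\mathrm{flat}}$ does not imply. You cannot simply add $d^{\ast}z=0$ as a gauge constraint on the corrector either, because you have already fixed the trace of $w$ to agree with $v$ on the whole half-ball boundary, so $z$ has no gauge freedom left. In the paper, the $dd^{\ast}$ term is carried along throughout: in Case I the auxiliary problem on $B_\sigma(y)$ is $d^{\ast}((\tilde A)_{B_\sigma}d\alpha)+dd^{\ast}\alpha=\ldots$ with $\alpha\in W^{1,2}_0$, and Proposition \ref{existence in W012} gives well-posedness and Gaffney then controls $\nabla\alpha$; in Case II the minimized energy includes $|d^{\ast}\beta|^2$, which is exactly what makes the minimizing sequence $W^{1,2}$-bounded via Gaffney on the (convex Lipschitz, not $C^2$) half-ball. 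Similarly, your frozen $w$-problem has the same degeneracy: without the $dd^{\ast}$ term it is not a well-posed elliptic boundary value problem, so the claimed decay of the mean oscillation of $\nabla w$ does not follow from the constant-coefficient theory you are citing. To fix the proposal you must add the $\langle d^{\ast}u;d^{\ast}\phi\rangle$ term before flattening, carry the resulting $\langle d^{\ast}v;d^{\ast}\psi\rangle$, $P,Q,R$ terms through, and build both the frozen problem and the corrector estimate around the genuinely elliptic operator $d^{\ast}(\tilde A\,d\,\cdot)+dd^{\ast}$.
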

	\begin{proof}
		For any $x_0\in\partial\Omega,$ we use Lemma \ref{flattening lemma} to deduce the existence a radius $ 0 < R_{0} < 1$ and a neighborhood $U$ of $x_0$ in $\mathbb{R}^n$ and an admissible boundary coordinate system $\Phi\in \operatorname{Diff}^{2}(\overline{B_{R_{0}}};\overline{U})$, satisfying 
		\begin{align*}
			\Phi(0)=x_0,  D\Phi(0) \in \mathbb{SO}\left(n\right), \Phi(B^{+}_{R_{0}})=\Omega\cap U  \text{ and }  \Phi(\Gamma_{R_{0}})=\partial\Omega\cap U, 
		\end{align*}  such that 
		$\omega=\Phi^{\ast}\left(u\right)\in W^{1,2}(B^{+}_{R_{0}};\varLambda^{k})$ satisfies 
		$e_{n}\wedge \omega = 0 $ on $\Gamma_{R_{0}}$ and  
		\begin{multline*}
			\int_{B^{+}_{R_{0}}}\langle \Tilde{A}\left( x \right)d\omega ;d\psi\rangle +\int_{B^{+}_{R_{0}}}\langle d^{\ast}\omega;d^{\ast}\psi\rangle  -\int_{B^{+}_{R_{0}}}\langle\Tilde{F} ;d\psi\rangle  \notag \\ + \int_{B_{R_{0}}^{+}} \langle \mathrm{P}\omega +  \mathrm{R}\nabla \omega ; \psi \rangle +  \int_{B^{+}_{R_{0}}}\langle \mathrm{Q} \omega ;\nabla\psi\rangle+\int_{B^{+}_{R_{0}}}\langle \mathrm{S}\nabla \omega;\nabla \psi\rangle=0  
		\end{multline*}
		for all $\psi\in W^{1,2}_{T, \text{flat}}(B^{+}_{R_{0}};\varLambda^{k}),$ where $\Tilde{A}, \Tilde{F}, \mathrm{P}, \mathrm{Q}, \mathrm{R}, \mathrm{S}$ are as in Lemma \ref{flattening lemma}. \smallskip 
		
		Now  let $0 < R < R_{0}.$ We are going to choose $R$ later ( see \eqref{choice R case 1} and \eqref{choice R case 2} ). The constants in all the estimates that we would derive from here onward may depend on $R_{0},$ but does not depend on $R.$ \smallskip

		Fix $0 < \sigma < R/16.$ Let $y = \left( y', y_{n}\right) \in B^{+}_{R/2}.$ With abuse of notation, we denote the point $\left( y', 0\right) \in \partial\mathbb{R}^{n}_{+}$ by $y'.$ Exactly one of the following can happen.  
		\begin{enumerate}[(I)]
			\item If $y_{n} > \sigma,$ then $B_{\sigma}\left( y\right) \subset \subset B^{+}_{R}.$
			\item or $0 \leq y_{n} \leq \sigma$ and then $B_{\sigma}\left( y\right) \cap B^{+}_{R} \subset B^{+}_{2\sigma}\left( y' \right) \subset B^{+}_{3R/4}. $
		\end{enumerate}
		
		\textbf{Case (I):} By Proposition \ref{existence in W012}, we can find $\alpha \in W_{0}^{1,2} \left( B\left( y, \sigma \right), \varLambda^{k}\right),$ the unique weak solution to the following Dirichlet BVP 
		\begin{multline}\label{dirichlet bvp boundary interior case}
			\int_{B_{\sigma}\left( y\right)}\left\langle \left(\Tilde{A}\right)_{B\left( y, \sigma \right)} d\alpha ; d\phi \right\rangle + \int_{B_{\sigma}\left( y\right)}\left\langle d^{\ast}\alpha ; d^{\ast}\phi\right\rangle \\= \int_{B_{\sigma}\left( y\right)}\left\langle \Tilde{F} ; d\phi\right\rangle - \int_{B_{\sigma}\left( y\right)}\left\langle G ; \nabla \phi \right\rangle - \int_{B_{\sigma}\left( y\right)}\left\langle g;\phi \right\rangle 
		\end{multline}
		for all $\phi \in W^{1,2}_{0}\left(B_{\sigma}\left( y\right), \varLambda^{k}\right)$, where $g:= \mathrm{P}\omega +  \mathrm{R}\nabla \omega $ and 
		\begin{align*}
			G &:= \mathrm{Q}\omega + \mathrm{S}\nabla \omega +  \left[ \Tilde{A}\left( x \right) - \left(\Tilde{A}\right)_{B_{\sigma}\left( y\right)}\right]d\omega := G_{1} + G_{2} + G_{3}.
		\end{align*}
		Using Proposition \ref{Gaffney with no L2 term} and \eqref{dirichlet bvp boundary interior case}, we have, 
		\begin{align}\label{dalpha esti1}
			\int_{B_{\sigma}\left( y\right)}\left\lvert \nabla \alpha \right\rvert^{2} &\leq C\left(\int_{B_{\sigma}\left( y\right)}\left\lvert d\alpha \right\rvert^{2} + \int_{B_{\sigma}\left( y\right)}\left\lvert d^{\ast} \alpha \right\rvert^{2}\right)  \notag\\
			&\leq C \left( \int_{B_{\sigma}\left( y\right)}\left\langle \left(\Tilde{A}\right)_{B_{\sigma}\left( y\right)} d\alpha  ; d\alpha \right\rangle + \int_{B_{\sigma}\left( y\right)}\left\lvert d^{\ast} \alpha \right\rvert^{2}\right) \notag \\&= C \left(\int_{B_{\sigma}\left( y\right)}\left\langle \Tilde{F}  ; d\alpha \right\rangle - \int_{B_{\sigma}\left( y\right)}\left\langle G  ; d\alpha \right\rangle - \int_{B_{\sigma}\left( y\right)}\left\langle g  ; \alpha \right\rangle \right).  
		\end{align}
		Now we estimate each term, starting with the easy ones. Using Poincar\'{e}-Sobolev, H\"{o}lder and Young's inequality with $\varepsilon>0,$ we deduce 
		\begin{align}\label{dalphaesti8}
			\left\lvert \int_{B_{\sigma}\left( y\right)}\left\langle g  ; \alpha \right\rangle\right\rvert &\leq C\left( \int_{B_{\sigma}\left( y\right)}\left\lvert g\right\rvert^{\frac{2n}{n+2}}\right)^{\frac{n+2}{2n}}\left( \int_{B_{\sigma}\left( y\right)}\left\lvert \nabla \alpha\right\rvert^{2}\right)^{\frac{1}{2}} \notag \\
			&\begin{multlined}[b]
				\leq C_{\varepsilon}\sigma^{2}\left[ \left\lVert \mathrm{P} \right\rVert_{L^{\infty}}^{2} \int_{B_{\sigma}\left( y\right)}\left\lvert \omega \right\rvert^{2} + \left\lVert \mathrm{R} \right\rVert_{L^{\infty}}^{2}\int_{B_{\sigma}\left( y\right)}\left\lvert \nabla\omega \right\rvert^{2}\right] \\ +\varepsilon \int_{B_{\sigma}\left( y\right)}\left\lvert \nabla\alpha \right\rvert^{2}.
			\end{multlined}
		\end{align}
		Similarly, we deduce 
		\begin{align}\label{dalpha esti4}
			\left\lvert \int_{B_{\sigma}\left( y\right)}\left\langle G_{1}  ; d\alpha \right\rangle\right\rvert  
			&\leq  C\varepsilon \int_{B_{\sigma}\left( y\right)}\left\lvert \nabla\alpha \right\rvert^{2} + C_{\varepsilon}\left\lVert \mathrm{Q} \right\rVert_{L^{\infty}}^{2}\int_{B_{\sigma}\left( y\right)}\left\lvert \omega\right\rvert^{2}.  
		\end{align}
		Using Young's inequality with $\varepsilon>0,$ we have 
		\begin{align}\label{dalpha esti3}
			\left\lvert \int_{B_{\sigma}\left( y\right)}\left\langle \Tilde{F}  ; d\alpha \right\rangle\right\rvert  
			&=  	\left\lvert \int_{B_{\sigma}\left( y\right)}\left\langle \Tilde{F} - \left(\Tilde{F} \right)_{B_{\sigma}\left( y\right)}  ; d\alpha \right\rangle\right\rvert  \notag \\
			&\leq  C\varepsilon \int_{B_{\sigma}\left( y\right)}\left\lvert \nabla\alpha \right\rvert^{2} + C_{\varepsilon}\int_{B_{\sigma}\left( y\right)}\left\lvert \Tilde{F} - \left(\Tilde{F} \right)_{B_{\sigma}\left( y\right)}\right\rvert^{2}. 
		\end{align}
		Now we estimate the tricky terms. By Young's inequality with $\varepsilon>0,$ we deduce  
		\begin{align*}
			\left\lvert \int_{B_{\sigma}\left( y\right)}\left\langle G_{2}  ; d\alpha \right\rangle\right\rvert  &= 	\left\lvert \int_{B_{\sigma}\left( y\right)}\left\langle G_{2} - \left(G_{2}\right)_{B_{\sigma}\left( y\right)}  ; d\alpha \right\rangle\right\rvert \\
			&\leq  C\varepsilon \int_{B_{\sigma}\left( y\right)}\left\lvert \nabla\alpha \right\rvert^{2} +  C_{\varepsilon}\int_{B_{\sigma}\left( y\right)}\left\lvert G_{2} - \left(G_{2}\right)_{B_{\sigma}\left( y\right)}\right\rvert^{2}.  
		\end{align*}
		Note that for scalar functions $f,g,$ where $f \in C^{1}$ and $g  \in \mathrm{BMO},$ we have,  
		\begin{align*}
			\int_{B_{\sigma}\left( y\right)}&\left\lvert fg - \left(fg\right)_{B_{\sigma}\left( y\right)}\right\rvert^{2} \\&\leq  	\int_{B_{\sigma}\left( y\right)}\left\lvert fg - \left(f\right)_{B_{\sigma}\left(y\right)} \left(g\right)_{B_{\sigma}\left(y\right)}\right\rvert^{2} \\
			&\leq  	\left\lVert f \right\rVert_{L^{\infty}}^{2}\int_{B_{\sigma}\left( y\right)}\left\lvert g - \left(g\right)_{B_{\sigma}\left(y\right)}\right\rvert^{2} + \left\lVert \nabla f \right\rVert_{L^{\infty}}^{2} \sigma^{n+2}\left\lvert \left(g\right)_{B_{\sigma}\left(y\right)}\right\rvert^{2}. 
		\end{align*}
		Using this for each component, and using properties of $\mathrm{S},$ we easily estimate, 
		\begin{multline*}
			\int_{B_{\sigma}\left( y\right)}\left\lvert \mathrm{S}\nabla \omega  - \left( \mathrm{S}\nabla \omega\right)_{B_{\sigma}\left( y\right)}\right\rvert^{2} \leq CR^{2}\int_{B_{\sigma}\left( y\right)}\left\lvert \nabla \omega  - \left( \nabla \omega\right)_{B_{\sigma}\left( y\right)}\right\rvert^{2}  \\ + C\sigma^{n+2}\left\lvert \left( \nabla \omega\right)_{B_{\sigma}\left( y\right)}\right\rvert^{2}.
		\end{multline*}
		Applying Proposition \ref{integral average by bmo norm interior}, we have 
		\begin{multline*}
			\int_{B_{\sigma}\left( y\right)}\left\lvert G_{2} - \left(G_{2}\right)_{B_{\sigma}\left( y\right)}\right\rvert^{2} \leq C\sigma^{n+2}\left( 1 + \left\lvert \log \sigma \right\rvert\right)^{2}\left[ \nabla \omega\right]_{\mathrm{BMO}\left(B^{+}_{R}\right)}^{2} \\+CR^{2}\int_{B_{\sigma}\left( y\right)}\left\lvert \nabla \omega  - \left( \nabla \omega\right)_{B_{\sigma}\left( y\right)}\right\rvert^{2} +\sigma^{2}\left\lVert \nabla\omega \right\rVert_{L^{2}\left( B^{+}_{R} \right)}^{2}.
		\end{multline*}
		Hence, we arrive at 
		\begin{multline}\label{dalpha esti5}
			\left\lvert \int_{B_{\sigma}\left( y\right)}\left\langle G_{2}  ; d\alpha \right\rangle\right\rvert  \leq C_{\varepsilon}\sigma^{n}\left( \theta\left(\sigma\right)\right)^{2}\left[\nabla \omega \right]_{\mathrm{BMO}\left(B^{+}_{R}\right)}^{2} + C_{\varepsilon} \sigma^{2}\left\lVert \nabla\omega \right\rVert_{L^{2}\left( B^{+}_{R} \right)}^{2} \\+ CR^{2}\int_{B_{\sigma}\left( y\right)}\left\lvert \nabla \omega  - \left( \nabla \omega\right)_{B_{\sigma}\left( y\right)}\right\rvert^{2} + C\varepsilon \int_{B_{\sigma}\left( y\right)}\left\lvert \nabla\alpha \right\rvert^{2}.
		\end{multline}
		Finally, we have
		\begin{align}\label{dalpha esti6}
			\left\lvert \int_{B_{\sigma}\left( y\right)}\left\langle G_{3}  ; d\alpha \right\rangle\right\rvert  
			\leq  C\varepsilon \int_{B_{\sigma}\left( y\right)}\left\lvert \nabla\alpha \right\rvert^{2}  + C_{\varepsilon}\int_{B_{\sigma}\left( y\right)}\left\lvert G_{3}\right\rvert^{2}.  
		\end{align}
		Now we estimate the last integral on the right in the last inequality. We have 
		\begin{align*}
			\int_{B_{\sigma}\left( y\right)}\left\lvert G_{3}\right\rvert^{2} &=  \int_{B_{\sigma}\left( y\right)}\left\lvert \left[ \Tilde{A}\left( x \right) - \left(\Tilde{A}\right)_{B_{\sigma}\left( y\right)}\right]d\omega\right\rvert^{2} \\
			&\leq \left( \int_{B_{\sigma}\left( y\right)}\left\lvert \Tilde{A}\left( x \right) - \left(\Tilde{A}\right)_{B_{\sigma}\left( y\right)}\right\rvert^{4} \right)^{\frac{1}{2}}\left( \int_{B_{\sigma}\left( y\right)}\left\lvert d\omega\right\rvert^{4} \right)^{\frac{1}{2}}.
		\end{align*}
		By Proposition \ref{Lphi majorizes Np interior}, we have 
		\begin{align*}
			\left( \int_{B_{\sigma}\left( y\right)}\left\lvert \Tilde{A}\left( x \right) - \left(\Tilde{A}\right)_{B_{\sigma}\left( y\right)}\right\rvert^{4} \right)^{\frac{1}{2}} \leq \frac{c\sigma^{\frac{n}{2}}}{\left(1 + \left\lvert \log \sigma \right\rvert \right)^{2}}\left[ \Theta^{\Tilde{A}}\left( \sigma\right)\right]^{2}.
		\end{align*}
		Moreover, using Proposition \ref{Lphi majorizes Np interior} and Proposition \ref{integral average by bmo norm interior}, we also have 
		\begin{align*}
			\left( \int_{B_{\sigma}\left( y\right)}\left\lvert d\omega\right\rvert^{4} \right)^{\frac{1}{4}} 
			&\leq  c\sigma^{\frac{n}{4}} \left[ \left( \fint_{B_{\sigma}\left( y\right)}\left\lvert d\omega - \left( d\omega\right)_{B_{\sigma}\left( y\right)}\right\rvert^{4} \right)^{\frac{1}{4}}  + \left\lvert \left( d\omega\right)_{B_{\sigma}\left( y\right)}\right\rvert \right] \\
			&\leq c\sigma^{\frac{n}{4}} \left[ \left( 1 + \left\lvert \log \sigma \right\rvert\right)\left[ d\omega\right]_{\mathrm{BMO}\left(B^{+}_{R}\right)} + \sigma^{-\frac{n}{2}}\left\lVert d\omega \right\rVert_{L^{2}\left( B^{+}_{R} \right)}\right]. 
		\end{align*}
		Combining the estimates, we finally obtain 
		\begin{multline}\label{dalpha esti7}
			\int_{B_{\sigma}\left( y\right)}\left\lvert G_{3}\right\rvert^{2} \leq \frac{c}{\left(1 + \left\lvert \log \sigma \right\rvert \right)^{2}}\left[ \tilde{A}\right]_{\mathscr{L}^{2,n}_{\frac{1}{1+ \left\lvert \log r \right\rvert}}\left(B^{+}_{R}\right)}^{2}\left\lVert \nabla \omega \right\rVert_{L^{2}\left( B^{+}_{R} \right)}^{2} \\+ c \sigma^{n}  \left[ \Theta^{\Tilde{A}}\left( \sigma\right)\right]^{2}\left[ \nabla\omega\right]_{\mathrm{BMO}\left(B^{+}_{R}\right)}^{2}. 
		\end{multline}
		Thus, combining the estimates  \eqref{dalpha esti1}, \eqref{dalphaesti8}, \eqref{dalpha esti3}, \eqref{dalpha esti4}, \eqref{dalpha esti5}, \eqref{dalpha esti6}, \eqref{dalpha esti7} and choosing $\varepsilon>0$ small enough, we deduce 
		\begin{multline}\label{dalpha final esti}
			\int_{B_{\sigma}\left( y\right)}\left\lvert \nabla \alpha \right\rvert^{2} \leq c \sigma^{n}\left( \sigma^{2}\left( 1 + \left\lvert \log \sigma \right\rvert\right)^{2} + c\left[\Theta^{\Tilde{A}}\left( \sigma\right)\right]^{2} \right) \left[\nabla \omega \right]_{\mathrm{BMO}\left(B^{+}_{R}\right)}^{2} \\+CR^{2}\int_{B_{\sigma}\left( y\right)}\left\lvert \nabla \omega  - \left( \nabla \omega\right)_{B_{\sigma}\left( y\right)}\right\rvert^{2}   + \sigma^{n}\kappa_{1}, 
		\end{multline}
		where 
		\begin{multline*}
			\kappa_{1}:= \fint_{B_{\sigma}\left( y\right)}\left\lvert \Tilde{F} - \left(\Tilde{F} \right)_{B_{\sigma}\left( y\right)}\right\rvert^{2} + \sigma^{-n}\left(\left\lVert \mathrm{Q} \right\rVert_{L^{\infty}}^{2} + \left\lVert \mathrm{P} \right\rVert_{L^{\infty}}^{2}\right)\left\lVert \omega \right\rVert_{L^{2}\left( B^{+}_{R} \right)}^{2} \\ + \sigma^{-n}\left( 1 + \left\lVert \mathrm{R} \right\rVert_{L^{\infty}}^{2}+\left[ \tilde{A}\right]_{\mathscr{L}^{2,n}_{\frac{1}{1+ \left\lvert \log r \right\rvert}}\left(B^{+}_{R}\right)}^{2} \right) \left\lVert \nabla\omega \right\rVert_{L^{2}\left( B^{+}_{R} \right)}^{2}.
		\end{multline*}
		Now, $\beta = \omega -\alpha,$ satisfies, for all $\phi \in W^{1,2}_{0}\left(B_{\sigma}\left( y\right), \varLambda^{k}\right),$ 		\begin{align*}
			\int_{B_{\sigma}\left( y\right)}\left\langle \left(\Tilde{A}\right)_{B\left( y, \sigma \right)} d\beta ; d\phi \right\rangle + \int_{B_{\sigma}\left( y\right)}\left\langle d^{\ast}\beta ; d^{\ast}\phi\right\rangle = 0.  
		\end{align*}
		Standard arguments, using decay estimates for $\beta$ implies, for any $0 < \rho < \sigma ,$ 
		\begin{align*}
			\int_{B_{\rho}\left( y\right)} &\left\lvert \nabla \omega - \left( \nabla \omega \right)_{B_{\rho}\left( y\right)}\right\rvert^{2} \\
			&\leq c \left(\frac{\rho}{\sigma}\right)^{n+2} 	\int_{B_{\sigma}\left( y\right)} \left\lvert \nabla \omega - \left( \nabla \omega \right)_{B_{\sigma}\left( y\right)}\right\rvert^{2} + c\int_{B_{\sigma}\left( y\right)} \left\lvert \nabla \alpha \right\rvert^{2}. 
		\end{align*}
		Defining $	\displaystyle	\psi \left( r \right):= \int_{B_{r}\left( y\right)} \left\lvert \nabla \omega - \left( \nabla \omega \right)_{B_{r}\left( y\right)}\right\rvert^{2} $,  and using \eqref{dalpha final esti}, we have 
		\begin{multline}\label{preiteration lemma}
			\psi\left(\rho\right) \leq c\left[ \left(\frac{\rho}{\sigma}\right)^{n+2} + CR^{2} \right] \psi \left(\sigma \right) \\ + C\sigma^{n}\left( \sigma^{2}\left( 1 + \left\lvert \log \sigma \right\rvert\right)^{2} +\left[\Theta^{\Tilde{A}}\left( \sigma\right)\right]^{2} \right) \left[\nabla \omega \right]_{\mathrm{BMO}\left(B^{+}_{R}\right)}^{2} + C\sigma^{n}\kappa_{1}.
		\end{multline}
		
		Now we choose $0 < R < 1$ in \eqref{preiteration lemma} such that
		\begin{align}\label{choice R case 1}
			CR^{2} < \varepsilon_{0},
		\end{align}
		where $\varepsilon_{0}>0 $ is the smallness parameter given by the standard iteration lemma ( Lemma 5.13 in \cite{giaquinta-martinazzi-regularity} ). Thus, the iteration lemma implies  
		\begin{align}\label{rho small boundary interior case final}
			\frac{1}{\rho^{n}}\psi \left(\rho\right) \leq C\left[ \frac{1}{\sigma^{n}}\psi\left(\sigma\right) + \left( \theta\left( \sigma\right)^{2} +\left[\Theta^{\Tilde{A}}\left( \sigma\right)\right]^{2} \right) \left[\nabla \omega \right]_{\mathrm{BMO}\left(B^{+}_{R}\right)}^{2} + \kappa_{1} \right] 
		\end{align}
		for every $0 < \rho < \sigma.$ \smallskip 
		
		\textbf{Case II: } For this case, we begin by claiming that there exists $\beta \in \omega + W_{T}^{1,2}\left( B^{+}_{2\sigma}\left( y' \right); \varLambda^{k}\right)$  which is the unique minimizer of the minimization problem 
		\begin{align*}
			m:= \inf \left\lbrace  I\left[u\right]: u \in \omega + W_{T}^{1,2}\left( B^{+}_{2\sigma}\left( y' \right); \varLambda^{k}\right)\right\rbrace,  
		\end{align*}
		where 
		\begin{align*}
			 I\left[u\right]: = \frac{1}{2}\int_{B^{+}_{2\rho}\left( y' \right)} \left[ \left\langle \left(\Tilde{A}\right)_{B^{+}_{2\sigma}\left( y' \right)} du ; du \right\rangle + \left\lvert d^{\ast}u\right\rvert^{2}\right]. 
		\end{align*}
		First note that although $B^{+}_{2\sigma}\left( y' \right)$ is not $C^{2},$ but it is a convex Lipschitz domain. Thus, the Gaffney inequality in Proposition \ref{Gaffney with no L2 term} holds in $B^{+}_{2\sigma}\left( y' \right).$ This can be proved by a approximation argument ( see \cite{Mitrea_Gaffneyineq} ). Thus, if $\left\lbrace \beta_{s} \right\rbrace_{s \in \mathbb{N}}$ is a minimizing sequence, using Legendre-Hadamard condition for $A$, we immediately deduce 
		\begin{align*}
			\int_{B^{+}_{2\rho}\left( y' \right)} \left( \left\lvert d\beta_{s}\right\rvert^{2} + \left\lvert d^{\ast}\beta_{s}\right\rvert^{2}\right) &\leq c I\left[ \beta_{s}\right] \leq c\left(m+1\right)
		\end{align*}
		for all but finitely many $s \in \mathbb{N}.$ Now, since $\beta_{s} -\omega \in W_{T}^{1,2}\left( B^{+}_{2\sigma}\left( y' \right); \varLambda^{k}\right)$ for all $s \in \mathbb{N},$ using the Gaffney inequality and a simple contradiction argument ( see Proposition $1$ in \cite{Sil_nonlinearStein} ), we arrive at 
		\begin{align*}
			\left\lVert \beta_{s} \right\rVert&_{W^{1,2}\left( B^{+}_{2\rho}\left( y' \right); \varLambda^{k}\right)}^{2} \\&\leq \left\lVert \beta_{s} -\omega \right\rVert_{W^{1,2}\left( B^{+}_{2\rho}\left( y' \right); \varLambda^{k}\right)}^{2} + \left\lVert \omega \right\rVert_{W^{1,2}\left( B^{+}_{2\rho}\left( y' \right); \varLambda^{k}\right)}^{2} \\&\leq c\int_{B^{+}_{2\rho}} \left\lvert \nabla \beta_{s} - \nabla \omega \right\rvert^{2} + \left\lVert \omega \right\rVert_{W^{1,2}\left( B^{+}_{2\rho}\left( y' \right); \varLambda^{k}\right)}^{2} \\&\leq c  \int_{B^{+}_{2\rho}\left( y' \right)} \left( \left\lvert d\beta_{s} - d\omega\right\rvert^{2} + \left\lvert d^{\ast}\beta_{s} - d^{\ast}\omega\right\rvert^{2}\right) + \left\lVert \omega \right\rVert_{W^{1,2}\left( B^{+}_{2\rho}\left( y' \right); \varLambda^{k}\right)}^{2}  \\&\leq c  \int_{B^{+}_{2\rho}\left( y' \right)} \left( \left\lvert d\beta_{s} \right\rvert^{2} + \left\lvert d^{\ast}\beta_{s} \right\rvert^{2}\right) + c\left\lVert \omega \right\rVert_{W^{1,2}\left( B^{+}_{2\rho}\left( y' \right); \varLambda^{k}\right)}^{2}  \\&\leq c\left(m+1\right) +c \left\lVert \omega \right\rVert_{W^{1,2}\left( B^{+}_{2\rho}\left( y' \right); \varLambda^{k}\right)}^{2} 
		\end{align*} 
		for all $s \in \mathbb{N}.$ Thus, $\left\lbrace \beta_{s} \right\rbrace_{s \in \mathbb{N}}$ is uniformly bounded in $W^{1,2}$ and thus, up to the extraction of a subsequence which we do not relabel, we have 
		\begin{align*}
			\beta_{s} \rightharpoonup \beta \qquad \text{ in } W^{1,2}\left( B^{+}_{2\rho}\left( y' \right); \varLambda^{k}\right)
		\end{align*}
		for some $\beta \in W^{1,2}\left( B^{+}_{2\rho}\left( y' \right); \varLambda^{k}\right).$ Moreover, since $W^{1,2}_{T}\left( B^{+}_{2\rho}\left( y' \right); \varLambda^{k}\right)$, being a linear subspace is a convex subset of $W^{1,2}\left( B^{+}_{2\rho}\left( y' \right); \varLambda^{k}\right), $ we also have $\beta \in \omega + W^{1,2}_{T}\left( B^{+}_{2\rho}\left( y' \right); \varLambda^{k}\right).$ Now it is easy to check that $\beta$ is a minimizer and by strict convexity, the unique minimizer. \smallskip 
		
		Now writing down the Euler-Lagrange equation for the minimization problem, we immediately deduce that  $\beta \in \omega + W^{1,2}_{T}\left( B^{+}_{2\rho}\left( y' \right); \varLambda^{k}\right)$ satisfies \begin{align*}
			\int_{B^{+}_{2\sigma}\left( y' \right)} \left\langle \left(\Tilde{A}\right)_{B^{+}_{2\sigma}\left( y' \right)} d\beta ; d\phi \right\rangle + \int_{B^{+}_{2\sigma}\left( y' \right)} \left\langle d^{\ast}\beta; d^{\ast}\phi \right\rangle  
			= 0  
		\end{align*}
		for all $\phi \in W_{T}^{1,2} \left( B^{+}_{2\sigma}\left( y' \right); \varLambda^{k}\right).$ Thus, arguing exactly as in Theorem 1, Theorem 2 and Theorem 3 of \cite{Sil_linearregularity}, for any $0 < \rho < \sigma ,$ we have the following decay estimate 
		\begin{align}\label{boundary decay for beta}
			\int_{B^{+}_{2\rho}\left( y' \right)} \left\lvert \nabla \beta - \left( \nabla \beta \right)_{B^{+}_{2\rho}\left( y' \right)}\right\rvert^{2}
			\leq c \left(\frac{\rho}{\sigma}\right)^{n+2} 	\int_{B^{+}_{2\sigma}\left( y' \right))} \left\lvert \nabla \beta - \left( \nabla \beta \right)_{B^{+}_{2\sigma}\left( y' \right)}\right\rvert^{2}. 
		\end{align} 
		
		Now, setting $\alpha = \omega -\beta,$ we deduce that $\alpha \in W^{1,2}_{T}\left( B^{+}_{2\rho}\left( y' \right); \varLambda^{k}\right)$ satisfies 
		\begin{multline*}
			\int_{B^{+}_{2\sigma}\left( y' \right)}\left\langle \left(\Tilde{A}\right)_{B^{+}_{2\sigma}\left( y' \right)} d\alpha ; d\phi \right\rangle + \int_{B^{+}_{2\sigma}\left( y' \right)}\left\langle d^{\ast}\alpha ; d^{\ast}\phi\right\rangle \\= \int_{B^{+}_{2\sigma}\left( y' \right)}\left\langle \Tilde{F} ; d\phi\right\rangle - \int_{B^{+}_{2\sigma}\left( y' \right)}\left\langle G ; \nabla \phi \right\rangle - \int_{B^{+}_{2\sigma}\left( y' \right)}\left\langle g;\phi \right\rangle ,
		\end{multline*}
		for all $\phi \in W_{T}^{1,2} \left( B^{+}_{2\sigma}\left( y' \right); \varLambda^{k}\right),$ where $g:= \mathrm{P}\omega +  \mathrm{R}\nabla \omega $ and 
		\begin{align*}
			G &:= \mathrm{Q}\omega + \mathrm{S}\nabla \omega +  \left[ \Tilde{A}\left( x \right) - \left(\Tilde{A}\right)_{B^{+}_{2\sigma}\left( y' \right)}\right]d\omega.
		\end{align*}
Arguing exactly as before, but using Proposition \ref{Lphi majorizes Np boundary} and Proposition \ref{integral average by bmo norm boundary} in place of Proposition \ref{Lphi majorizes Np interior} and Proposition \ref{integral average by bmo norm interior}, respectively, we deduce the estimate 
		\begin{multline}\label{dalpha esti boundary boundary case}
			\int_{B^{+}_{2\sigma}\left( y' \right)}\left\lvert \nabla \alpha \right\rvert^{2}  \leq c\sigma^{n}\left( \sigma^{2}\left( 1 + \left\lvert \log \sigma \right\rvert\right)^{2} + c\left[\Theta^{\Tilde{A}}\left( \sigma\right)\right]^{2} \right) \left[\nabla \omega \right]_{\mathrm{BMO}\left(B^{+}_{R}\right)}^{2} \\+CR^{2}\int_{B^{+}_{2\sigma}\left( y' \right)}\left\lvert \nabla \omega  - \left( \nabla \omega\right)_{B^{+}_{2\sigma}\left( y' \right)}\right\rvert^{2} + c\sigma^{n}\kappa_{2},
		\end{multline}
		where 
		\begin{multline*}
			\kappa_{2}:= \fint_{B^{+}_{2\sigma}\left( y' \right)}\left\lvert \Tilde{F} - \left(\Tilde{F} \right)_{B^{+}_{2\sigma}\left( y' \right)}\right\rvert^{2} + \sigma^{-n}\left(\left\lVert \mathrm{Q} \right\rVert_{L^{\infty}}^{2} + \left\lVert \mathrm{P} \right\rVert_{L^{\infty}}^{2}\right)\left\lVert \omega \right\rVert_{L^{2}\left( B^{+}_{R} \right)}^{2} \\ + \sigma^{-n}\left( 1 + \left\lVert \mathrm{R} \right\rVert_{L^{\infty}}^{2}+\left[ \tilde{A}\right]_{\mathscr{L}^{2,n}_{\frac{1}{1+ \left\lvert \log r \right\rvert}}\left(B^{+}_{R}\right)}^{2} \right) \left\lVert \nabla\omega \right\rVert_{L^{2}\left( B^{+}_{R} \right)}^{2}.
		\end{multline*}
		Now by standard arguments, \eqref{boundary decay for beta} and \eqref{dalpha esti boundary boundary case} and setting $$\displaystyle \psi \left( r \right):= \int_{B^{+}_{2r}\left( y' \right)} \left\lvert \nabla \omega - \left( \nabla \omega \right)_{B^{+}_{2r}\left( y' \right)}\right\rvert^{2},$$  we deduce 
		\begin{multline}\label{preiteration lemma boundary case}
			\psi\left(\rho\right) \leq c\left[ \left(\frac{\rho}{\sigma}\right)^{n+2} + CR^{2} \right] \psi \left(\sigma \right) \\ + C\sigma^{n}\left( \theta\left( \sigma\right)^{2}  +\left[\Theta^{\Tilde{A}}\left( \sigma\right)\right]^{2} \right) \left[\nabla \omega \right]_{\mathrm{BMO}\left(B^{+}_{R}\right)}^{2} + C\sigma^{n}\kappa_{2}.
		\end{multline}
		Now we choose $0 < R < 1$ in \eqref{preiteration lemma boundary case} such that
		\begin{align}\label{choice R case 2}
			CR^{2} < \varepsilon_{0},
		\end{align}
		where $\varepsilon_{0}>0$ is the smallness parameter given by the standard iteration lemma ( Lemma 5.13 in \cite{giaquinta-martinazzi-regularity} ). Thus, using the iteration lemma and the fact that $B_{\rho}\left( y\right)\cap B_{R}^{+} \subset B^{+}_{2\rho}\left( y' \right),$ we obtain  
		\begin{multline}\label{rho small boundary boundary case final}
			\frac{1}{\rho^{n}}\int_{B_{\rho}\left( y\right)\cap B_{R}^{+}} \left\lvert \nabla \omega - \left( \nabla \omega \right)_{B_{\rho}\left( y\right)\cap B_{R}^{+}}\right\rvert^{2}  \\\leq C\left[ \frac{1}{\sigma^{n}}\psi\left(\sigma\right) + \left( \theta\left( \sigma\right)^{2} +\left[\Theta^{\Tilde{A}}\left( \sigma\right)\right]^{2} \right) \left[\nabla \omega \right]_{\mathrm{BMO}\left(B^{+}_{R}\right)}^{2} + \kappa_{3} \right] 
		\end{multline}
		On the other hand, clearly we have 
		\begin{align*}
			\frac{1}{\rho^{n}} \int_{B^{+}_{R} \cap B\left( y, \rho \right)} \left\lvert \nabla \omega - \left( \nabla \omega \right)_{B^{+}_{R} \cap B\left( y, \rho \right)}\right\rvert^{2} \leq C\sigma^{-n}\left\lVert \nabla\omega \right\rVert_{L^{2}\left( B^{+}_{R} \right)}^{2}, 
		\end{align*}
		for any $\rho \geq \sigma$ and any $y \in B^{+}_{R/2}.$ Combining this with \eqref{rho small boundary interior case final} and \eqref{rho small boundary boundary case final} and taking supremum over all $\rho >0$ and $y \in B^{+}_{R/2},$ we arrive at 
		\begin{align*}
			\left[ \nabla \omega \right]_{\mathrm{BMO}\left(B^{+}_{R/2}\right)}^{2} \leq C \left(  \theta\left( \sigma\right)^{2}  +\left[\Theta_{\frac{1}{1+ \left\lvert \log r \right\rvert}}^{\Tilde{A}}\left( \sigma\right)\right]^{2} \right) \left[\nabla \omega \right]_{\mathrm{BMO}\left(B^{+}_{R}\right)}^{2} + \sigma^{-n}\tilde{\kappa},
		\end{align*} 
		where 
		\begin{align*}
			\tilde{\kappa}:= \left[ \Tilde{F} \right]_{\mathrm{BMO}\left(B^{+}_{R}\right)}^{2} &+ \sigma^{-n}\left(\left\lVert \mathrm{Q} \right\rVert_{L^{\infty}}^{2} + \left\lVert \mathrm{P} \right\rVert_{L^{\infty}}^{2}\right)\left\lVert \omega \right\rVert_{L^{2}\left( B^{+}_{R} \right)}^{2} \\ &+ \sigma^{-n}\left( 1 + \left\lVert \mathrm{R} \right\rVert_{L^{\infty}}^{2}+\left[ \tilde{A}\right]_{\mathscr{L}^{2,n}_{\frac{1}{1+ \left\lvert \log r \right\rvert}}\left(B^{+}_{R}\right)}^{2} \right) \left\lVert \nabla\omega \right\rVert_{L^{2}\left( B^{+}_{R} \right)}^{2}.
		\end{align*}
		Since $u = \left( \Phi^{-1}\right)^{\ast}\omega,$ \eqref{u boundary estimate} follows and this finishes the proof.  	 
	\end{proof}
	
	\subsection{Approximation}	 
	\begin{lemma}[Approximation lemma]\label{approximation lemma}
		If Theorem \ref{Main theorem BMO linear A only} holds with the additional assumption that $A$ is smooth, then Theorem \ref{Main theorem BMO linear A only} holds. 
	\end{lemma}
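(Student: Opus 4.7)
The plan is to prove the full statement of Theorem \ref{Main theorem BMO linear A only} by smoothly approximating $A$, invoking the hypothesis for the smooth case, and passing to the limit via the lower semicontinuity of the $\mathrm{BMO}$ seminorm.

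First, I would build the approximating sequence by mollifying $A$ with a nonnegative, boundary-adapted mollifier to obtain $A_{s} \in C^{\infty}(\overline{\Omega}; \mathcal{L}(\varLambda^{k+1}; \varLambda^{k+1}))$. Convolution with a nonnegative kernel of unit mass automatically preserves pointwise positivity of the associated quadratic form, so $A_{s}$ is uniformly Legendre elliptic with the same constant $\gamma$. The $L^{\infty}$-bound is preserved for free, and by Remark \ref{Remark about Mean Oscillation space properties}(i) one also has
\begin{align*}
\Theta^{A_{s}}(\sigma) \leq c\bigl(\Theta^{A}(\sigma) + \|A\|_{L^{\infty}} \sigma^{n}\bigr),
\end{align*}
together with the strong convergence $\|A_{s} - A\|_{\mathscr{L}_{(1+|\log r|)^{-1}}^{2,n}} + \|A_{s} - A\|_{L^{p}} \to 0$ for every $1 \leq p < \infty$. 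The crucial feature is that $\|A_{s}\|_{L^{\infty}}$, $[A_{s}]_{\mathscr{L}_{(1+|\log r|)^{-1}}^{2,n}}$ and $\Theta^{A_{s}}$ are all controlled uniformly in $s$ by quantities depending only on $A$.

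For each $s$, let $u_{s}$ be the solution of the problem obtained from Theorem \ref{Main theorem BMO linear A only} by replacing $A$ with $A_{s}$ and retaining the same data as $u$; this solution exists by standard theory for smooth, uniformly elliptic coefficients, and is uniquely determined after the gauge-fixing furnished by Proposition \ref{weak formulation change linear lemma}. Applying the smooth case of Theorem \ref{Main theorem BMO linear A only} — whose constants, through the sharp form of Lemma \ref{boundary estimate lemma}, depend on $A_{s}$ only through $\gamma$, $\|A_{s}\|_{L^{\infty}}$, $[A_{s}]_{\mathscr{L}_{(1+|\log r|)^{-1}}^{2,n}}$ and $\Theta^{A_{s}}$ — I get a bound $[du_{s}]_{\mathrm{BMO}(\Omega)} \leq K$ with $K$ independent of $s$. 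To pass to the limit, subtract the two equations:
\begin{align*}
\int_{\Omega} \langle A_{s}\, d(u_{s} - u); d\phi \rangle = \int_{\Omega} \langle (A - A_{s})\, du; d\phi \rangle
\end{align*}
for all admissible test forms $\phi$. Testing with $\phi = u_{s} - u$ (admissible after an appropriate gauge choice via Proposition \ref{weak formulation change linear lemma}), and combining Legendre ellipticity of $A_{s}$, Propositions \ref{Gaffney with no L2 term} and \ref{poincare gaffney}, and Cauchy--Schwarz, gives
\begin{align*}
\|d(u_{s} - u)\|_{L^{2}(\Omega)}^{2} \leq C \int_{\Omega} |A - A_{s}|^{2} |du|^{2}.
\end{align*}
Since $\|A - A_{s}\|_{L^{\infty}}$ is uniformly bounded and $A_{s_{k}} \to A$ almost everywhere along a subsequence (from the $L^{p}$-convergence), the right-hand side tends to $0$ by dominated convergence. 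Hence $du_{s_{k}} \to du$ in $L^{2}(\Omega)$, and Fatou applied to the mean oscillation yields $[du]_{\mathrm{BMO}(\Omega)} \leq \liminf_{k} [du_{s_{k}}]_{\mathrm{BMO}(\Omega)} \leq K$, completing the proof.

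The main obstacle is arranging the \emph{uniformity} of the constants in the smooth case: one must verify that they depend on $A$ only through $\gamma$, $\|A\|_{L^{\infty}}$, $[A]_{\mathscr{L}_{(1+|\log r|)^{-1}}^{2,n}}$ and $\Theta^{A}$, and not through any higher-order moduli of $A$. Otherwise the smooth approximants, which carry no uniform $C^{1}$ (or higher) bound, would yield no useful estimate in the limit. This is precisely the reason the boundary estimate in Lemma \ref{boundary estimate lemma} is stated in the sharp form in which it appears.
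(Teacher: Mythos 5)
Your approach is essentially the same as the paper's: approximate $A$ by a sequence $A_{s} \in C^{\infty}(\overline{\Omega})$ with uniformly controlled $L^{\infty}$-norm, $\mathscr{L}^{2,n}_{(1+|\log r|)^{-1}}$-seminorm and modulus $\Theta^{A_{s}}$ (the paper invokes Remark \ref{Remark about Mean Oscillation space properties}(i), you mollify directly; either gives the same key uniform bounds); solve the $A_{s}$-problem to get $u_{s}$; apply the smooth case of Theorem \ref{Main theorem BMO linear A only} to obtain a uniform $\mathrm{BMO}$ bound; subtract the weak formulations, test with $u_{s}-u$, and deduce $L^{2}$ convergence of gradients; then pass to the limit in the $\mathrm{BMO}$ seminorm. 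Your closing remark on why the constants must depend on $A$ only through $\gamma$, $\|A\|_{L^{\infty}}$, $[A]_{\mathscr{L}^{2,n}_{(1+|\log r|)^{-1}}}$ and $\Theta^{A}$ correctly identifies the essential structural point.

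Two things you leave unaddressed. First, Theorem \ref{Main theorem BMO linear A only} asserts \emph{both} $[du]_{\mathrm{BMO}}$ and $[\nabla u]_{\mathrm{BMO}}$ estimates, but your limit argument only treats $du$. To get $[\nabla u]_{\mathrm{BMO}}$ you need $\nabla u_{s} \to \nabla u$ in $L^{2}$, which is precisely why the paper upgrades $\|d(u_{s}-u)\|_{L^{2}} \to 0$ to $\|u_{s}-u\|_{W^{1,2}} \to 0$ via Gaffney and Poincar\'e; you cite Propositions \ref{Gaffney with no L2 term} and \ref{poincare gaffney} but then only write the $\|d(u_{s}-u)\|_{L^{2}}$ bound, so the gradient estimate is never closed. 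Second, Proposition \ref{poincare gaffney} requires $u_{s}-u \in W^{1,2}_{d^{\ast},T} \cap (\mathcal{H}^{k}_{T})^{\perp}$, but the $u$ given in the theorem's hypothesis need not lie in $(\mathcal{H}^{k}_{T})^{\perp}$. The paper sidesteps this by producing $u$ as the weak limit of $u_{s} \in (\mathcal{H}^{k}_{T})^{\perp}$, so the orthogonality is automatic; since you fix $u$ in advance, you should either project off its harmonic component first (the paper's Theorem \ref{Main theorem BMO linear A only} proof already reduces to $\mathcal{H}^{k}_{T}=\{0\}$ before invoking the lemma, so this is harmless in context, but your standalone argument should say so). Also, uniqueness of $u_{s}$ comes from the normalization $u_{s} \in (\mathcal{H}^{k}_{T})^{\perp}$, not from the gauge-switching of Proposition \ref{weak formulation change linear lemma}, which only asserts equivalence of weak formulations.
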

	\begin{proof}
		Let $A \in L^{\infty}\cap \mathrm{V}\mathscr{L}^{2}_{\frac{1}{1+ \left\lvert \log r \right\rvert}}\left( \Omega; \mathcal{L}\left( \varLambda^{k+1}; \varLambda^{k+1}\right)\right)$ be uniformly Legendre-elliptic with ellipticity constant $\gamma >0$ and let $F \in \mathrm{BMO}\left( \Omega; \varLambda^{k+1}\right)$. By Remark \ref{Remark about Mean Oscillation space properties} (i) , there exists a sequence $\left\lbrace A_{s}\right\rbrace_{s \in \mathbb{N}} \subset C^{\infty}(\overline{\Omega}, \mathcal{L}\left( \varLambda^{k+1}; \varLambda^{k+1}\right)$ such that 
		$A_{s}$ is uniformly Legendre-elliptic with constant $\gamma/2,$ and 
		we have the strong convergences \begin{align*}
			A_{s} \rightarrow A \qquad \text{ in } \mathscr{L}^{2,n}_{\frac{1}{1+ \left\lvert \log r \right\rvert}} \quad \text{ and } \quad \text{ in }  L^{p} \text{ for every } 1 \leq p < \infty, 
		\end{align*} along with the uniform estimates $\left\lVert A_{s}\right\rVert_{L^{\infty}} \leq \left\lVert A \right\rVert_{L^{\infty}},$ and  \begin{align*}
			\Theta^{A_{s}}\left( \rho \right) \leq c \left( \Theta^{A}\left( \rho \right) + \rho^{n}\left\lVert A \right\rVert_{L^{\infty}}\right)  \qquad \text{ for all } s \in \mathbb{N}.
		\end{align*}
		For every $s \in \mathbb{N},$ using Theorem 9 and Remark 12 in \cite{Sil_linearregularity}, we can find $u_{s} \in W^{1,2}_{d^{\ast}, T}\cap \left( \mathcal{H}^{h}_{T}\right)^{\perp}$, the unique weak solution for the following 
		\begin{align}\label{weak formulation for sequence As}
			\left\lbrace \begin{aligned}
				d^{\ast} \left( A_{s}\left( x \right) du_{s} \right)  &= d^{\ast} F &&\text{ in } \Omega, \\
				d^{\ast} u_{s} &= 0 &&\text{  in } \Omega, \\
				\nu\wedge u_{s} &= 0 &&\text{ on } \partial\Omega.
			\end{aligned}	\right. 
		\end{align}
		Putting $\phi = u_{s}$ in the weak formulation, using Young's inequality with $\varepsilon>0$ small enough and Proposition \ref{poincare gaffney}, we deduce 
		\begin{align*}
			\left\lVert u_{s} \right\rVert_{W^{1,2}\left(\Omega\right)}^{2} \leq C\left\lVert F \right\rVert_{L^{2}\left(\Omega\right)}^{2} \qquad \text{ for every } s \in \mathbb{N}. 
		\end{align*}
		Thus, up to the extraction of a subsequence,  we have
		\begin{align*}
			u_{s} \rightharpoonup u \qquad \text{ in } W^{1,2}_{d^{\ast}, T}\left( \Omega ; \varLambda^{k}\right) \qquad \text{ for some } u \in W^{1,2}_{d^{\ast}, T}\cap \left(\mathcal{H}^{k}_{T}\right)^{\perp}.
		\end{align*}
		Now, for any $1 \leq q <2,$ we have 
		\begin{align}\label{convergence in Lq}
			\left\lVert \left[ A_{s}\left(x \right) - A\left( x\right)\right]du_{s}\right\rVert_{L^{q}\left(\Omega\right)} 
			\leq  \left\lVert A_{s} - A\right\rVert_{L^{\frac{2q}{2-q}}\left(\Omega\right)}\left\lVert du_{s}\right\rVert_{L^{2}\left(\Omega\right)}\rightarrow 0. 
		\end{align}
		Now by Lemma \ref{weak formulation change linear lemma}, \eqref{weak formulation for sequence As} implies that in particular, we also have 
		\begin{align}\label{weak formulation for sequence As changed to ccinfty}
			\int_{\Omega} \left\langle A_{s}\left(x \right)du_{s}; d\eta \right\rangle = \int_{\Omega} \left\langle F; d\eta \right\rangle \qquad \text{ for every } \eta \in C_{c}^{\infty}\left( \Omega ; \varLambda^{k}\right). 
		\end{align}
		Thus, since $du_{s} \rightharpoonup du$ in $L^{2},$ in view of \eqref{convergence in Lq}, for any $\eta \in C_{c}^{\infty}\left( \Omega ; \varLambda^{k}\right),$ we deduce 
		\begin{align*}
			\int_{\Omega} &\left\langle A\left(x \right)du; d\eta \right\rangle - \int_{\Omega} \left\langle F; d\eta \right\rangle \\
			&=\int_{\Omega} \left\langle A\left(x \right)\left[ du - du_{s}\right]; d\eta \right\rangle - \int_{\Omega} \left\langle \left[ A_{s}\left(x \right) - A\left( x\right)\right]du_{s}; d\eta \right\rangle \rightarrow 0. 
		\end{align*}
		By density and Lemma \ref{weak formulation change linear lemma}, this implies $u \in W^{1,2}_{d^{\ast}, T}\cap \left( \mathcal{H}^{k}_{T}\right)^{\perp}$ is the unique weak solution of 
		\begin{align}\label{u weak solution claim}
			\left\lbrace \begin{aligned}
				d^{\ast} \left( A\left( x \right) du \right)  &= d^{\ast} F &&\text{ in } \Omega, \\
				d^{\ast} u &= 0 &&\text{  in } \Omega, \\
				\nu\wedge u &= 0 &&\text{ on } \partial\Omega,
			\end{aligned}	\right. 
		\end{align} 
		From \eqref{weak formulation for sequence As} and \eqref{u weak solution claim},  for every $\phi \in W^{1,2}_{d^{\ast}, T},$ we have 
		\begin{align*}
			\int_{\Omega} \left\langle A_{s}\left(x \right)\left[ du_{s} -du \right]; d\phi \right\rangle = 	\int_{\Omega} \left\langle \left[ A\left(x\right) - A_{s}\left(x \right)\right] du ; d\phi \right\rangle 
		\end{align*}
		Putting $\phi = u_{s} -u,$ using Young's inequality with $\varepsilon>0$ small enough and by Proposition \ref{poincare gaffney} we deduce 
		\begin{align}\label{us-u strong convergence estimate}
			\left\lVert u_{s} -u \right\rVert_{W^{1,2}\left(\Omega\right)}^{2} \leq C\left\lVert \left[ A\left(x\right) - A_{s}\left(x \right)\right] du \right\rVert_{L^{2}\left(\Omega\right)}^{2} \qquad \text{ for every } s \in \mathbb{N}. 
		\end{align}
		But since  we have 
		\begin{align*}
			\left\lVert \left[ A_{s}\left(x \right) - A\left( x\right)\right]du\right\rVert_{L^{q}\left(\Omega\right)} 
			\leq  \left\lVert A_{s} - A\right\rVert_{L^{\frac{2q}{2-q}}\left(\Omega\right)}\left\lVert du\right\rVert_{L^{2}\left(\Omega\right)}\rightarrow 0, 
		\end{align*}  for any $1\leq q<2$, RHS of \eqref{us-u strong convergence estimate} converges to $0$ by dominated convergence. Now if Theorem \ref{Main theorem BMO linear A only} holds for smooth coefficients, we have the estimates
		\begin{align*}
			\left[ \nabla u_{s}\right]_{\mathrm{BMO}\left(\Omega\right)} &\leq   C \left(  \left[ F\right]_{\mathrm{BMO}\left( \Omega\right)} + \left\lVert F \right\rVert_{L^{2}\left(\Omega\right)} + \left\lVert u_{s} \right\rVert_{L^{2}\left(\Omega\right)}\right) \qquad \text{ for all } s \in \mathbb{N}.
		\end{align*}
		Now fix $x \in \Omega$ and $r>0.$ We have 
		\begin{align*}
			&\fint_{\Omega_{r}\left(x\right)} 
			\left\lvert \nabla u - \left( \nabla u\right)_{\Omega_{r}\left(x\right)}\right\rvert^{2} \\
			&\leq 2\left[ \nabla u_{s}\right]_{\mathrm{BMO}\left(\Omega\right)}^{2} + 2\left\lvert \Omega_{r}\left(x\right)\right\rvert \int_{\Omega_{r}\left(x\right)}\left\lvert \nabla u - \nabla u_{s}\right\rvert^{2} \\
			&\leq C \left(  \left[ F\right]_{\mathrm{BMO}\left( \Omega\right)}^{2} + \left\lVert F \right\rVert_{L^{2}\left(\Omega\right)}^{2} + \left\lVert u_{s} \right\rVert_{L^{2}\left(\Omega\right)}\right)^{2} + 2\left\lvert \Omega_{r}\left(x\right)\right\rvert \int_{\Omega_{r}\left(x\right)}\left\lvert \nabla u - \nabla u_{s}\right\rvert^{2}
		\end{align*}
		for every $s \in \mathbb{N}.$ Since $u_{s} \rightarrow u$ in $W^{1,2},$ letting $s \rightarrow \infty$ and taking supremum, 		\begin{align*}
			\left[ \nabla u\right]_{\mathrm{BMO}\left(\Omega\right)} &\leq   C \left(  \left[ F\right]_{\mathrm{BMO}\left( \Omega\right)} + \left\lVert F \right\rVert_{L^{2}\left(\Omega\right)} + \left\lVert u \right\rVert_{L^{2}\left(\Omega\right)}\right).
		\end{align*}
		The estimate for $\left[ du\right]_{\mathrm{BMO}\left(\Omega\right)} $ follows similarly. \end{proof}	 
	
	\subsection{Flattening the boundary}
	\begin{lemma}[Flattening lemma]\label{flattening lemma}    
		Let $\partial\Omega$ be $C^{2}$ and let $F \in L^{2}\left(\Omega;\varLambda^{k+1}\right).$ Let 
		$A \in L^{\infty}\cap \mathrm{V}\mathscr{L}^{2,n}_{\frac{1}{1+ \left\lvert \log r \right\rvert}} \left( \Omega; \mathcal{L}\left(\varLambda^{k+1}; \varLambda^{k+1} \right)\right)$ be uniformly Legendre elliptic with ellipticity constant $\gamma$. Let $\omega\in W^{1,2}_{d^{\ast}, T}\left(\Omega;\varLambda^k\right)$  satisfy
		\begin{align}\label{omegaequation}
			\int_{\Omega}\langle A(x)d\omega;d\phi\rangle =\int_{\Omega}\langle F;d\phi\rangle \quad \text{ for all } \phi\in W^{1,2}_{T}\left(\Omega;\varLambda^k\right).
		\end{align} 
		Let $x_0\in\partial\Omega.$  Then there exists a neighborhood $U$ of $x_0$ in $\mathbb{R}^n$ and a positive number $0 < R_{0}<1,$ such that there exists an admissible boundary coordinate system $\Phi\in \operatorname{Diff}^{2}(\overline{B_{R_{0}}};\overline{U})$, such that 
		\begin{align*}
			\Phi(0)=x_0, \quad D\Phi(0) \in \mathbb{SO}\left(n\right), \quad\Phi(B^{+}_{R_{0}})=\Omega\cap U, \quad  \Phi(\Gamma_{R_{0}})=\partial\Omega\cap U, 
		\end{align*} and $u=\Phi^{\ast}\left(\omega\right)\in W^{1,2}(B^{+}_{R_{0}};\varLambda^{k})$ satisfies 
		$e_{n}\wedge u = 0$ on $\Gamma_{R_{0}},$ and 
		\begin{multline}\label{uequation}
			\int_{B^{+}_{R_{0}}}\langle \Tilde{A}\left( x \right)du ;d\psi\rangle +\int_{B^{+}_{R_{0}}}\langle d^{\ast}u;d^{\ast}\psi\rangle  -\int_{B^{+}_{R_{0}}}\langle\Tilde{F} ;d\psi\rangle   \\ + \int_{B_{R_{0}}^{+}} \langle \mathrm{P}u +  \mathrm{R}\nabla u ; \psi \rangle +  \int_{B^{+}_{R_{0}}}\langle \mathrm{Q} u ;\nabla\psi\rangle+\int_{B^{+}_{R_{0}}}\langle \mathrm{S}\nabla u ;\nabla \psi\rangle=0  
		\end{multline}
		for all $\psi\in W^{1,2}_{T, \text{flat}}(B^{+}_{R_{0}};\varLambda^{k}),$ where $\tilde{A} \in L^{\infty}\cap \mathrm{V}\mathscr{L}^{2,n}_{\frac{1}{1+ \left\lvert \log r \right\rvert}} \left( B^{+}_{R_{0}}; \mathcal{L}\left(\varLambda^{k+1}; \varLambda^{k+1} \right)\right)$ is uniformly Legendre-elliptic, $\tilde{F} \in L^{2}\left( B^{+}_{R_{0}};\varLambda^{k+1}\right),$  
		\begin{align*}
			& \mathrm{P} \in C\left(\overline{B^{+}_{R_{0}}}; \mathcal{L}\left( \varLambda^{k}; \varLambda^{k}\right)\right), \mathrm{Q} \in C\left(\overline{B^{+}_{R_{0}}}; \mathcal{L}\left( \varLambda^{k}; \varLambda^{k}\otimes \mathbb{R}^{n}\right)\right), \\& \mathrm{R} \in C\left(\overline{B^{+}_{R_{0}}}; \mathcal{L}\left( \varLambda^{k}\otimes \mathbb{R}^{n}; \varLambda^{k}\right)\right) \text{ and } \mathrm{S} \in C^{1}\left(\overline{B^{+}_{R_{0}}}; \mathcal{L}\left( \varLambda^{k}\otimes \mathbb{R}^{n}; \varLambda^{k}\otimes \mathbb{R}^{n}\right)\right).
		\end{align*}
		Furthermore, there exist constants $0 < c_{0}<1, $ $c_{1}, c_{2} >0$ and $C>0,$  all depending only on $x_{0}, n, k, N, \Omega, R_{0}$,  
		such that 
		\begin{align*}
			\left[ \tilde{A}\right]_{\mathscr{L}^{2,n}_{\frac{1}{1+ \left\lvert \log r \right\rvert}}} &\leq C \left[ A \right]_{\mathscr{L}^{2,n}_{\frac{1}{1+ \left\lvert \log r \right\rvert}}},\\ \left\lVert \tilde{A} \right\rVert_{L^{\infty}} &\leq C \left\lVert A \right\rVert_{L^{\infty}}, \\ \left\lVert \tilde{F} \right\rVert_{L^{2}} &\leq C \left\lVert F \right\rVert_{L^{2}}, \\
			\left\lVert \mathrm{P} \right\rVert_{L^{\infty}}, \left\lVert \mathrm{Q} \right\rVert_{L^{\infty}}, \left\lVert \mathrm{R} \right\rVert_{L^{\infty}} &\leq C, \\ \left\lVert \mathrm{S} \right\rVert_{L^{\infty}\left(B_{r}^{+}\right)} &\leq Cr \qquad \text{ for all } 0 < r \leq R_{0},  
		\end{align*}
		and
		\begin{align*}
			\Theta^{\tilde{A}} \left(\rho\right) &\leq c_{1} \left( \Theta^{A} \left(c_{2}\rho\right) + \theta\left(\rho\right)\right) 
		\end{align*}
		for all $ 0< \rho < \min \left\lbrace R_{0}, R_{0}/c_{2}\right\rbrace.$	Moreover, if in addition, $F \in \mathrm{BMO}\left(\Omega;\varLambda^{k+1}\right),$ then $\tilde{F} \in \mathrm{BMO}\left(B^{+}_{R_{0}};\varLambda^{k+1}\right)$ as well and we have the estimate
		\begin{align*}
			\left[ \tilde{F} \right]_{\mathrm{BMO}\left(B^{+}_{R_{0}};\varLambda^{k+1}\right)} &\leq C\left[ F \right]_{\mathrm{BMO}\left(\Omega;\varLambda^{k+1}\right)}.  
		\end{align*}
	\end{lemma}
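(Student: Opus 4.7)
The plan is to construct the coordinate chart using $C^{2}$ regularity of $\partial\Omega$, transform the weak formulation via pullback by $\Phi$, and carefully identify the correction tensors arising from the non-commutativity of $d^{\ast}$ with pullback. Since $\partial\Omega$ is $C^{2}$ near $x_{0}$, the standard boundary straightening procedure produces a neighborhood $U$ of $x_{0}$ and a $C^{2}$-diffeomorphism $\Phi:\overline{B_{R_{0}}}\to\overline{U}$ mapping $B^{+}_{R_{0}}$ onto $\Omega\cap U$ and $\Gamma_{R_{0}}$ onto $\partial\Omega\cap U$; composing with a rotation of the domain, I arrange $D\Phi(0)\in\mathbb{SO}(n)$, so that $\nu(x_{0})$ pulls back to $e_{n}$ and $\nu\wedge\omega=0$ on $\partial\Omega\cap U$ translates cleanly to $e_{n}\wedge u=0$ on $\Gamma_{R_{0}}$ with $u:=\Phi^{\ast}\omega$.

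Next, I exploit the gauge freedom. Because $\omega\in W^{1,2}_{d^{\ast},T}(\Omega)$ satisfies $d^{\ast}\omega=0$, the equation \eqref{omegaequation} is equivalent to
\begin{align*}
\int_{\Omega}\langle A(x)d\omega;d\phi\rangle+\int_{\Omega}\langle d^{\ast}\omega;d^{\ast}\phi\rangle=\int_{\Omega}\langle F;d\phi\rangle\qquad\text{ for all }\phi\in W^{1,2}_{T}(\Omega;\varLambda^{k}).
\end{align*}
Given any test form $\psi\in W^{1,2}_{T,\mathrm{flat}}(B^{+}_{R_{0}};\varLambda^{k})$, extend $(\Phi^{-1})^{\ast}\psi$ by zero to obtain an admissible test form $\phi\in W^{1,2}_{T}(\Omega;\varLambda^{k})$, and change variables $x=\Phi(y)$. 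Since $d$ commutes with pullback, $du=\Phi^{\ast}(d\omega)$, so the first term becomes $\int_{B^{+}_{R_{0}}}\langle\tilde A\,du;d\psi\rangle$ where $\tilde A(y)$ is the natural conjugation of $A(\Phi(y))$ by $D\Phi(y)$ weighted by $|\det D\Phi(y)|$, as in Lemma \ref{composition with diffeo and product with determinant}; in particular its mean-oscillation modulus obeys the claimed estimate. The right-hand side produces $\tilde F$ by the same recipe, satisfying the stated $L^{2}$ and $\mathrm{BMO}$ bounds by the standard preservation of $L^{2}$ and $\mathrm{BMO}$ under $C^{2}$ diffeomorphisms.

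The crux is the middle term $\int\langle d^{\ast}\omega;d^{\ast}\phi\rangle$. Since $d^{\ast}=(-1)^{nk+1}\ast d\ast$ involves the Euclidean metric, $d^{\ast}$ does not commute with $\Phi^{\ast}$; instead one has $\Phi^{\ast}(d^{\ast}\omega)=d^{\ast}u+E(y)\nabla u+$ (zeroth order), where $E(y)$ is a $C^{1}$ tensor assembled from $D\Phi(y)$ and its cofactors that vanishes identically at $y=0$ precisely because $D\Phi(0)\in\mathbb{SO}(n)$ makes the pullback an isometry on forms. Expanding the bilinear form $\langle\Phi^{\ast}(d^{\ast}\omega);\Phi^{\ast}(d^{\ast}\phi)\rangle|\det D\Phi|$ generates the main term $\int\langle d^{\ast}u;d^{\ast}\psi\rangle$ together with cross terms which I group as $\int\langle\mathrm S\nabla u;\nabla\psi\rangle+\int\langle\mathrm Q u;\nabla\psi\rangle+\int\langle\mathrm Pu+\mathrm R\nabla u;\psi\rangle$. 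The coefficient $\mathrm S$ inherits the vanishing of $E$ at $y=0$; combining this with the $C^{1}$ bound $|D\Phi(y)-D\Phi(0)|\leq C|y|$ yields $\|\mathrm S\|_{L^{\infty}(B^{+}_{r})}\leq Cr$ for all $0<r\leq R_{0}$, while $\mathrm P,\mathrm Q,\mathrm R$ are merely continuous and bounded by the $C^{2}$ norm of $\Phi$.

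The main obstacle in this argument is the identification of the tensor $\mathrm S$ with its crucial smallness property: one must carry out the algebra of pulling back $d^{\ast}$ carefully enough to separate the piece that cancels with $d^{\ast}u$ from the $O(|y|)$ remainder, without invoking any regularity of $A$ (in contrast to \cite{Sil_linearregularity}, where analogous terms were absorbed into the coefficient). Once $\mathrm S(0)=0$ is isolated, the bound $\|\mathrm S\|_{L^{\infty}(B^{+}_{r})}\leq Cr$ is exactly what Lemma \ref{boundary estimate lemma} needs to control the extraneous $\int\langle\mathrm S\nabla u;\nabla\psi\rangle$ via a smallness factor $CR^{2}$ absorbable by the iteration lemma, which is why this quantitative refinement is the essential output of the flattening step.
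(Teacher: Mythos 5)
Your proof is correct and follows the same overall architecture as the paper's: straighten the boundary with a $C^{2}$ chart arranged so $D\Phi(0)\in\mathbb{SO}(n)$, add the vanishing term $\int\langle d^{\ast}\omega;d^{\ast}\phi\rangle$, pull back, exploit that $d$ commutes with $\Phi^{\ast}$ but $d^{\ast}$ does not, group the correction terms into $\mathrm{P},\mathrm{Q},\mathrm{R},\mathrm{S}$, and use $\mathrm{S}(0)=0$ together with $\mathrm{S}\in C^{1}$ to get $\|\mathrm S\|_{L^{\infty}(B^{+}_{r})}\leq Cr$. The one genuine divergence is how you pass to the localized weak formulation: you simply extend $(\Phi^{-1})^{\ast}\psi$ by zero to $\Omega$ and observe it lies in $W^{1,2}_{T}(\Omega)$, whereas the paper first invokes Proposition \ref{weak formulation change linear lemma} to pass from $W^{1,2}_{T}(\Omega)$ to $W^{1,2}_{0}(\Omega)$, restricts to $C^{\infty}_{c}(\tilde{\Omega})$ on an intermediate $C^{2}$ domain $\tilde\Omega$ containing $\Omega\cap U$, and then invokes the proposition again on $\tilde\Omega$ to recover the $W^{1,2}_{T}(\tilde\Omega)$ test space. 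Your route is more direct and equally valid, since $\psi$ vanishing on $C_{R_{0}}$ guarantees the zero extension is $W^{1,2}$, and $e_{n}\wedge\psi=0$ on $\Gamma_{R_{0}}$ gives the tangential condition on $\partial\Omega$; the paper's longer route mainly serves to showcase its gauge-switching machinery. One small stylistic remark: your description of $\tilde A$ as a conjugation of $A\circ\Phi$ by $D\Phi$ weighted by $|\det D\Phi|$ is in fact the precise formula (pulling back a $(k+1)$-form by $\Phi^{-1}$ introduces $\Lambda^{k+1}(D\Phi^{-T})$ on both slots of the inner product), while the paper's displayed formula $\tilde A(y)=|\det D\Phi(y)|A(\Phi(y))$ omits the conjugation; either way the claimed $L^{\infty}$ and $\mathrm{V}\mathscr{L}^{2,n}_{(1+|\log r|)^{-1}}$ estimates hold, since the extra $D\Phi$ factors are $C^{1}$ and only perturb $\Theta^{\tilde A}$ by a $\theta(\rho)$ term, exactly as Lemma \ref{composition with diffeo and product with determinant} handles.
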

	\begin{proof}
		Since $\partial\Omega$ is $C^{2},$ for any $x_{0} \in \partial\Omega,$ there exists a neighborhood $U_{1}$ of $x_0$ in $\mathbb{R}^n$ and a positive number $0 < R_{1} <1$ such that there exists an admissible boundary coordinate system $\Phi\in \operatorname{Diff}^{2}(\overline{B_{R_{1}}};\overline{U_{1}})$, such that 
		\begin{align*}
			\Phi(0)=x_0, \quad D\Phi(0) \in \mathbb{SO}\left(n\right), \quad\Phi(B^{+}_{R_{1}})=\Omega\cap U_{1}, \quad  \Phi(\Gamma_{R_{1}})=\partial\Omega\cap U_{1}. 
		\end{align*}
		This is well-known. See  Lemma B.7 in \cite{csatothesis} for a detailed proof.  
		Now pick $0 < R_{0} < R_{1}$ and define $U := \Phi \left( B_{R_{0}}\right). $ Now choose an open set $U \subset \tilde{U} \subset U_{1}$ such that $ \partial\Omega \cap U \subset  \partial\Omega \cap \tilde{U}$ and $\tilde{\Omega}:= \Omega \cap \tilde{U}$ has $C^{2}$ boundary. Now, by Proposition \ref{weak formulation change linear lemma}, \eqref{omegaequation} is equivalent to 
		\begin{align*}
			\int_{\Omega}\langle A(x)d\omega;d\phi\rangle =\int_{\Omega}\langle F;d\phi\rangle \quad \text{ for all } \phi\in W^{1,2}_{0}\left(\Omega;\varLambda^k\right).
		\end{align*} This in particular implies 
		\begin{align*}
			\int_{\tilde{\Omega}}\langle A(x)d\omega;d\phi\rangle =\int_{\tilde{\Omega}}\langle F;d\phi\rangle \quad \text{ for all } \phi\in C^{\infty}_{c}\left(\tilde{\Omega};\varLambda^k\right).
		\end{align*}
		As $\tilde{\Omega}$ is open, bounded and has $C^{2}$ boundary, applying Proposition \ref{weak formulation change linear lemma} again, we deduce 
		\begin{align*}
			\int_{\tilde{\Omega}}\langle A(x)d\omega;d\phi\rangle =\int_{\tilde{\Omega}}\langle F;d\phi\rangle \quad \text{ for all } \phi\in W^{1,2}_{T}\left(\tilde{\Omega};\varLambda^k\right).
		\end{align*}
		But since $d^{\ast}\omega = 0$ in $\Omega,$ this can also be written as   
		\begin{align*}
			\int_{\tilde{\Omega}}\langle A(x)d\omega;d\phi\rangle + \int_{\tilde{\Omega}}\langle d^{\ast}\omega;d^{\ast}\phi\rangle =\int_{\tilde{\Omega}}\langle F;d\phi\rangle \quad \text{ for all } \phi\in W^{1,2}_{T}\left(\tilde{\Omega};\varLambda^k\right).
		\end{align*}
		Note that $\nu\wedge \omega = 0$ on $\partial\Omega$ and thus, in particular, on $\partial\Omega \cap U.$ Hence, we have 
		\begin{align*}
			e_{n}\wedge u = e_{n}\wedge \Phi^{\ast}\left( \omega \right) = \Phi^{\ast}\left( \nu \right) \wedge \Phi^{\ast}\left( \omega \right) = \Phi^{\ast}\left( \nu \wedge \omega \right) = 0\qquad \text{ on } \Gamma_{R_{0}}.
		\end{align*}
		Now,  $\phi := \left( \Phi^{-1}\right)^{\ast}\left( \psi\right) \in W^{1,2}_{T}\left(\tilde{\Omega};\varLambda^k\right)$ for any $\psi\in W^{1,2}_{T, \text{flat}}(B^{+}_{R_{0}};\varLambda^{k}).$ Thus,  
		\begin{multline*}
			\int_{\Phi \left( B^{+}_{R_{0}}\right)}\langle A(x)d\left((\Phi^{-1})^{\ast}u\right);d\left((\Phi^{-1})^{\ast}\psi\right)\rangle \\+\int_{\Phi \left( B^{+}_{R_{0}}\right)}\langle d^{\ast}\left((\Phi^{-1})^{\ast}u\right);d^{\ast}\left((\Phi^{-1})^{\ast}\psi\right)\rangle 
			=\int_{\Phi \left( B^{+}_{R_{0}}\right)}\langle F;d\left((\Phi^{-1})^{\ast}\psi\right)\rangle,
		\end{multline*}
		for all $\psi \in W^{1,2}_{T, \text{flat}}(B^{+}_{R_{0}};\varLambda^{k}).$
		Set 
		\begin{align}\label{formula for tilde A tilde F flattening}
			\left\lbrace \begin{aligned}
				\Tilde{A}\left( y\right)&=\left\lvert \operatorname{det} D\Phi(y) \right\rvert A \left( \Phi(y)\right) , \\
				\Tilde{F}\left( y\right)&=\left\lvert \operatorname{det} D\Phi(y) \right\rvert F \left( \Phi(y)\right),  
			\end{aligned} \right. \qquad \text{ for a.e. } y \in B_{R_{0}}^{+}. 
		\end{align}	
		Since $d$ commutes with pullback, by change of variables formula, we deduce 
		\begin{align*}
			\int_{\Phi \left( B^{+}_{R_{0}}\right)}&\left\langle
			A(x)d\left((\Phi^{-1})^{\ast}u\right);d\left((\Phi^{-1})^{\ast}\psi\right)\right\rangle \\&=\int_{\Phi \left( B^{+}_{R_{0}}\right)}\left\langle A(x)(\Phi^{-1})^{\ast}du;(\Phi^{-1})^{\ast}d\psi\right\rangle\\
			&=\int_{B_{R_{0}}^{+}}\left\langle A\left(\Phi(y)\right)du(y);d\psi(y)\right\rangle\left\lvert \operatorname{det} D\Phi(y) \right\rvert	\\ &=\int_{B_{R_{0}}^{+}}\left\langle \Tilde{A}\left( y\right) du(y);d\psi(y)\right\rangle. 
		\end{align*}
		Similarly, we have 
		\begin{align*}
			\int_{\Phi \left( B^{+}_{R_{0}}\right)}\langle F;d\left((\Phi^{-1})^{\ast}\psi\right)\rangle=\int_{B_{R_{0}}^{+}}\langle\Tilde{F};d\psi\rangle.
		\end{align*}
		Now, by computing out the derivatives, using change of variables formula to transfer all integrals to $B_{R_{0}}^{+}$ and grouping the terms, we can write
		\begin{multline*}
			\int_{\Phi \left( B^{+}_{R_{0}}\right)}\langle d^{\ast}\left((\Phi^{-1})^{\ast}u\right);d^{\ast}\left((\Phi^{-1})^{\ast}\psi\right)\rangle -\int_{B_{R_{0}}^{+}}\langle d^{\ast}u;d^{\ast}\psi\rangle \\= \int_{B_{R_{0}}^{+}} \langle \mathrm{P}u +  \mathrm{R}\nabla u ; \psi \rangle +  \int_{B_{R_{0}}^{+}}\langle \mathrm{Q} u ;\nabla\psi\rangle+\int_{B_{R_{0}}^{+}}\langle \mathrm{S}\nabla u;\nabla \psi\rangle.
		\end{multline*}
		Since these coefficients contains up to second order derivatives of $\Phi,$ with $\mathrm{S}$ containing only up to first order derivatives of $\Phi,$ the claimed regularity properties follow. Note that since $\mathrm{S}$ is $C^{1},$  to show the estimate 
		$$ \left\lVert \mathrm{S} \right\rVert_{L^{\infty}\left(B_{r}^{+}\right)} \leq Cr \qquad \text{ for any } 0 < r< R_{0}, $$ it is enough to show that $\mathrm{S}\left(0\right) =0.$ See Lemma 4.17 in \cite{csatothesis} for a proof. This follows from the fact for any $M \in \mathbb{SO}\left(n\right),$ if we set 
		$\Psi(y)=My$ in $B_R, $	then 
		\begin{align*}
			\langle d^{\ast}\left((\Psi^{-1})^{\ast}u\right);d^{\ast}\left((\Psi^{-1})^{\ast}\psi\right)\rangle - 	\left\langle d^{\ast}u\circ\Psi^{-1};d^{\ast}\psi\circ\Psi^{-1}\right\rangle = 0. 
		\end{align*}
		The estimates for $\tilde{A}$ and $\tilde{F}$ follow from \eqref{formula for tilde A tilde F flattening}.
	\end{proof}
	\subsection{Global estimates}
	\begin{theorem}\label{Main theorem BMO linear A only}
		Let $\Omega \subset \mathbb{R}^{n}$ be  open, bounded with $\partial \Omega \in C^{2}.$   Let $A \in L^{\infty}\cap \mathrm{V}\mathscr{L}_{\left( 1+ \left\lvert \log r \right\rvert \right)^{-1}}^{2,n}\left( \Omega; \mathcal{L}\left( \varLambda^{k+1}; \varLambda^{k+1}\right)\right)$ be uniformly Legendre-elliptic with ellipticity constant $\gamma >0$ and let $F \in \mathrm{BMO}\left( \Omega; \varLambda^{k+1}\right)$.
		If $u \in W^{1,2}_{d^{\ast}, T}\left( \Omega ; \varLambda^{k}\right)$ is a weak solution for the following 
		\begin{align}\label{Main}
			\left\lbrace \begin{aligned}
				d^{\ast} \left( A\left( x \right) du \right)  &= d^{\ast} F &&\text{ in } \Omega, \\
				d^{\ast} u &= 0 &&\text{  in } \Omega, \\
				\nu\wedge u &= 0 &&\text{ on } \partial\Omega,
			\end{aligned}	\right. 
		\end{align}
		then $\nabla u \in \mathrm{BMO} \left( \Omega,; \varLambda^{k + 1}\right)$ and there exists a constant 
		\begin{align*}
			C = C\left( n, k, N, \gamma, \Omega, \Theta_{\frac{1}{1+ \left\lvert \log r \right\rvert}}^{A}, \|A\|_{L^{\infty}(\Omega)}, \left[ A \right]_{\mathrm{BMO}^{2}_{\frac{1}{1+ \left\lvert \log r \right\rvert}}\left(\Omega\right)} \right) >0
		\end{align*} such that we have the estimates
		\begin{align}\label{du bmo estimate A only}
			\left[ du\right]_{\mathrm{BMO}\left(\Omega\right)} &\leq   C \left(  \left[ F\right]_{\mathrm{BMO}\left( \Omega\right)} + \left\lVert F \right\rVert_{L^{2}\left(\Omega\right)}\right) \intertext{ and } 
			\left[ \nabla u\right]_{\mathrm{BMO}\left(\Omega\right)} &\leq   C \left(  \left[ F\right]_{\mathrm{BMO}\left( \Omega\right)} + \left\lVert F \right\rVert_{L^{2}\left(\Omega\right)} + \left\lVert u \right\rVert_{L^{2}\left(\Omega\right)}\right). \label{grad u bmo estimate A only}
		\end{align}
	\end{theorem}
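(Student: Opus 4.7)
By Lemma \ref{approximation lemma}, it suffices to prove the theorem under the additional assumption that $A\in C^{\infty}(\overline{\Omega})$, since the stated constant in \eqref{du bmo estimate A only}--\eqref{grad u bmo estimate A only} depends on $A$ only through $\|A\|_{L^{\infty}}$, $[A]_{\mathscr{L}^{2,n}_{(1+|\log r|)^{-1}}}$ and $\Theta^{A}$, all of which are controlled along the approximating sequence. Under the smoothness assumption the boundary decay Lemma \ref{boundary estimate lemma} is directly applicable. My plan is to (i) establish an interior analogue of that lemma, (ii) use a finite cover of $\overline{\Omega}$ by interior balls and boundary patches to pass from local $\mathrm{BMO}$ seminorms to the global one, (iii) exploit the smallness of $\Theta^{A}(\sigma)$ and $\theta(\sigma)$ as $\sigma\to 0^{+}$ to absorb the global $\mathrm{BMO}$ seminorm, and (iv) close the estimate by bounding $\|du\|_{L^{2}}$, and in the second case $\|\nabla u\|_{L^{2}}$, by the data.

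For (i), at an interior point $x_{0}\in\Omega$ with $B_{2R_{0}}(x_{0})\subset\Omega$, I would rerun Case~(I) of the proof of Lemma \ref{boundary estimate lemma}, but on the unflattened ball $B_{\sigma}(y)$ with $y$ in a fixed interior neighborhood. No flattening is needed, so the auxiliary tensors $\mathrm{P},\mathrm{Q},\mathrm{R},\mathrm{S}$ drop out and $\tilde{A}=A$, $\tilde{F}=F$. The same comparison-by-frozen-coefficients (via Proposition \ref{existence in W012}), the same decomposition $G=G_{1}+G_{2}+G_{3}$ minus the $\mathrm{S}$-term, the same control of $G_{3}$ by Proposition \ref{Lphi majorizes Np interior} and Proposition \ref{integral average by bmo norm interior}, and the iteration Lemma 5.13 of \cite{giaquinta-martinazzi-regularity} yield an estimate of the same structural form as \eqref{u boundary estimate} on $B_{R_{0}/2}(x_{0})$, with a $\kappa$ containing only $L^{2}$- and $\mathrm{BMO}$-norms of $u,\nabla u, F$ and the relevant norms of $A$. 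For (ii), cover $\overline{\Omega}$ by finitely many balls $\{B_{R_{i}/2}(x_{i})\}$ and boundary patches $\{\Phi_{j}(B^{+}_{R_{j}/2})\}$, with Lebesgue number $\delta>0$. For any $x\in\overline{\Omega}$ and $0<\rho<\delta/2$, the set $\Omega_{\rho}(x)$ lies inside one member of the cover, so the local oscillation is dominated by the corresponding local $\mathrm{BMO}$ seminorm; for $\rho\geq\delta/2$ it is bounded trivially by $C\delta^{-n}\|\nabla u\|_{L^{2}}^{2}$. Summing these two regimes gives
\begin{equation*}
[\nabla u]_{\mathrm{BMO}(\Omega)}^{2}\leq C\max_{i,j}\bigl([\nabla u]_{\mathrm{BMO}(B_{R_{i}/2}(x_{i}))}^{2}+[\nabla u]_{\mathrm{BMO}(\Phi_{j}(B^{+}_{R_{j}/2}))}^{2}\bigr)+C\delta^{-n}\|\nabla u\|_{L^{2}(\Omega)}^{2}.
\end{equation*}

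Inserting the local estimates from (i) and from Lemma \ref{boundary estimate lemma} produces
\begin{equation*}
[\nabla u]_{\mathrm{BMO}(\Omega)}^{2}\leq C\,\varepsilon(\sigma)\,[\nabla u]_{\mathrm{BMO}(\Omega)}^{2}+C\sigma^{-n}\tilde{\kappa},
\end{equation*}
where $\varepsilon(\sigma):=[\Theta^{A}(2c_{1}\sigma)]^{2}+[\theta(\sigma)]^{2}(\|A\|_{L^{\infty}}^{2}+1)$ tends to $0$ as $\sigma\to 0^{+}$ (by $A\in\mathrm{V}\mathscr{L}^{2,n}_{(1+|\log r|)^{-1}}$ and the definition of $\theta$), and $\tilde{\kappa}$ is a harmless sum of $L^{\infty}/L^{2}/\mathrm{BMO}$ norms of the data together with $\|\nabla u\|_{L^{2}}^{2}+\|u\|_{L^{2}}^{2}$. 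Fixing $\sigma$ small enough that $C\varepsilon(\sigma)\leq 1/2$ allows absorption, giving $[\nabla u]_{\mathrm{BMO}(\Omega)}^{2}\leq C\tilde{\kappa}$. To finish (iv), testing \eqref{Main} with $\phi=u$ (legal by Proposition \ref{weak formulation change linear lemma}) and using uniform Legendre ellipticity yields $\|du\|_{L^{2}}\leq C\|F\|_{L^{2}}$, which combined with Proposition \ref{Gaffney with no L2 term} controls $\|\nabla u\|_{L^{2}}$ by $\|F\|_{L^{2}}+\|u\|_{L^{2}}$; this immediately produces \eqref{grad u bmo estimate A only}. For \eqref{du bmo estimate A only} I would replace $u$ by its projection onto $(\mathcal{H}^{k}_{T})^{\perp}$, which leaves $du$ unchanged and, by Proposition \ref{poincare gaffney}, gives $\|u\|_{W^{1,2}}\leq C\|F\|_{L^{2}}$. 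The main technical hurdle I anticipate is step (ii): verifying that the global $\mathrm{BMO}$ seminorm is indeed controlled by the local ones \emph{uniformly in the scale}, since the covering constants must be chosen before $\sigma$, yet $\sigma$ must be taken so small (possibly smaller than $\delta$) that the absorption is valid on every cover element simultaneously; this forces a careful book-keeping of the dependence of constants on $R_{0}$ in the boundary lemma and a corresponding uniform choice in the interior lemma.
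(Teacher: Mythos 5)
Your proposal is correct and follows essentially the same route as the paper: reduce to smooth $A$ via Lemma \ref{approximation lemma}, establish an interior analogue of Lemma \ref{boundary estimate lemma} by freezing coefficients and comparing against a constant-coefficient solution, combine with the boundary patches via a finite cover, absorb the global $\mathrm{BMO}$ seminorm using the vanishing of $\Theta^{A}(\sigma)$ and $\theta(\sigma)$, and finish with Poincar\'{e}--Gaffney after projecting off $\mathcal{H}^{k}_{T}$. The only (harmless) deviations are that the paper's interior comparison solves the auxiliary problem in $W^{1,2}_{d^{\ast},T}(B_{\sigma}(y))$ via Theorem~9 of \cite{Sil_linearregularity} rather than the Dirichlet problem of Proposition \ref{existence in W012}, and the paper does the orthogonal split $u = v + h$ with $h \in \mathcal{H}^{k}_{T}$ at the very start rather than deferring it; your anticipated book-keeping issue with the covering constants versus $\sigma$ is resolved exactly as you suggest, by fixing the cover first, setting $\sigma_{1}$ to be the minimum of the half-patch radii, and then choosing $\sigma_{0}<\sigma_{1}$.
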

	\begin{remark}
		By standard contradiction-compactness arguments, the $L^{2}$ norm of $u$ on the RHS of \eqref{grad u bmo estimate A only} can be dropped if uniqueness holds for \eqref{Main}. Proposition \ref{poincare gaffney} and ellipticity of $A$ implies that uniqueness holds if and only if   $\mathcal{H}^{k}_{T}= \left\lbrace 0 \right\rbrace. $
	\end{remark}
	\begin{proof} We can assume $\mathcal{H}^{k}_{T}\left(\Omega; \varLambda^{k}\right)= \left\lbrace 0 \right\rbrace.$ Indeed, if not, then for any $u \in W^{1,2}_{d^{\ast}, T}\left( \Omega ; \varLambda^{k}\right)$ we have the orthogonal decomposition 
		\begin{align*}
			u =v + h, \qquad \text{ and } \left\lVert u\right\rVert_{L^{2}}^{2} = \left\lVert v\right\rVert_{L^{2}}^{2} + \left\lVert h\right\rVert_{L^{2}}^{2},
		\end{align*}
		where $v \in W^{1,2}_{d^{\ast}, T}\cap \left( \mathcal{H}^{k}_{T}\right)^{\perp}$ and $h \in \mathcal{H}^{k}_{T}.$ Since $h$ is a harmonic field, $dh = 0$ and  we have the estimate $\left\lVert \nabla h \right\rVert_{L^{\infty}\left( \Omega;\varLambda^{k}\right)} \leq C \left\lVert h\right\rVert_{L^{2}\left(\Omega ;\varLambda^{k}\right)}.$ Thus if the estimates \eqref{du bmo estimate A only} and \eqref{grad u bmo estimate A only} for $v,$ then they also hold for $u.$ By Lemma \ref{approximation lemma}, we can also assume that $A$ is smooth. The regularity results ( see e.g \cite{Sil_linearregularity} ) for smooth coefficients implies $\nabla u$ is $BMO$ in $\Omega.$ So we show the estimates \eqref{du bmo estimate A only} and \eqref{grad u bmo estimate A only}. \smallskip 
		
		By Lemma \ref{boundary estimate lemma}, for every $x \in \partial\Omega,$ there exists $0 < R_{x} < 1,$ a neighborhood $W_{x}$ of $x$ in $\mathbb{R}^{n}$ and diffeomorphisms $\Phi_{x}:B_{R_{x}}\rightarrow W_{x}  $ such that the conclusions of Lemma \ref{boundary estimate lemma} holds. By compactness of $\partial\Omega,$ we can find finitely many points $x_{1}, \ldots, x_{l} \in \partial\Omega$ such that $	\partial\Omega \subset \bigcup\limits_{i=1}^{l} \Phi_{x_{i}}\left(B^{+}_{R_{x_{i}}/2}\right).$
		Choose $\Omega_{0}$ such that $\Omega_{0} \subset \subset \Omega,$  $d_{0}:= \operatorname{dist}\left(\Omega_{0}, \partial \Omega \right) < 1$ and 
		\begin{align*}
			\Omega \subset \bigcup\limits_{i=0}^{l} \Omega_{i}, \quad \text{ where } \Omega_{i} = \Phi_{x_{i}}\left(B^{+}_{R_{x_{i}}/2}\right) \text{ for } 1 \leq i \leq l.
		\end{align*}
		Clearly, we have 
		\begin{align}\label{gradu bmo estimate sum of splitting}
			\left[ \nabla u\right]^{2}_{\mathrm{BMO}\left(\Omega\right)} \leq \left[ \nabla u\right]_{\mathrm{BMO}\left(\Omega_{0}\right)}^{2} + \sum\limits_{i=1}^{l} 	\left[ \nabla u \right]_{\mathrm{BMO}\left( \Phi_{x_{i}} \left( B^{+}_{R_{x_{i}}/2}\right)\right)}^{2}.
		\end{align}
		Fix $0 < \sigma < d_{0}/2.$ Then for any $y \in \Omega_{0},$ we have $B_{\sigma}\left( y\right) \subset \subset \Omega.$ By Theorem 9 in \cite{Sil_linearregularity}, we can find $\alpha \in W^{1,2}_{d^{\ast}, T}\left( B_{\sigma}\left( y\right); \varLambda^{k}\right)$ which is a weak solution of 
		\begin{align}\label{picking up the rhs interior}
			\int_{B_{\sigma}\left( y\right)}\left\langle  \left( A\right)_{B_{\sigma}\left( y\right)} d\alpha; d\psi \right\rangle  = 	\int_{B_{\sigma}\left( y\right)}\left\langle F; d\psi\right\rangle  - 	\int_{B_{\sigma}\left( y\right)}\left\langle G_{3}; d\psi \right\rangle 
		\end{align}
		for all $\psi \in W^{1,2}_{d^{\ast}, T}\left( B_{\sigma}\left( y\right); \varLambda^{k}\right), $ where $	G_{3}:= \left[ A\left(x\right) - \left( A\right)_{B_{\sigma}\left( y\right)}\right]du. $ Plugging $\psi = \alpha$ in \eqref{picking up the rhs interior} and Young's inequality with $\varepsilon>0,$ we have 
		\begin{align*}
			\int_{B_{\sigma}\left( y\right)} \left\lvert d\alpha\right\rvert^{2} &\leq \frac{C}{\gamma}	\int_{B_{\sigma}\left( y\right)} \left\langle \left( A\right)_{B_{\sigma}\left( y\right)}d\alpha;  d\alpha\right\rangle  \\&=\frac{C}{\gamma}\left(  \int_{B_{\sigma}\left( y\right)} \left\langle F;  d\alpha\right\rangle - \int_{B_{\sigma}\left( y\right)} \left\langle G_{3};  d\alpha\right\rangle \right)  \\&= \frac{C}{\gamma} \left( \int_{B_{\sigma}\left( y\right)} \left\langle F - \left(F\right)_{B_{\sigma}\left( y\right)};  d\alpha\right\rangle  - \int_{B_{\sigma}\left( y\right)} \left\langle G_{3};  d\alpha\right\rangle\right) \\
			&\leq \frac{C}{\gamma}\left( \left\lvert \int_{B_{\sigma}\left( y\right)} \left\langle F - \left(F\right)_{B_{\sigma}\left( y\right)};  d\alpha\right\rangle\right\rvert + \left\lvert \int_{B_{\sigma}\left( y\right)} \left\langle G_{3};  d\alpha\right\rangle\right\rvert \right) \\
			&\leq \varepsilon 	\int_{B_{\sigma}\left( y\right)} \left\lvert d\alpha\right\rvert^{2} + C_{\varepsilon}\left( 	\int_{B_{\sigma}\left( y\right)} \left\lvert F -  \left(F\right)_{B_{\sigma}\left( y\right)}\right\rvert^{2} + \int_{B_{\sigma}\left( y\right)} \left\lvert G_{3}\right\rvert^{2}\right) .
		\end{align*}
		Thus, choosing $\varepsilon>0$ small enough and Proposition \ref{Gaffney with no L2 term}, we deduce 
		\begin{align*}
			\int_{B_{\sigma}\left( y\right)} \left\lvert \nabla \alpha\right\rvert^{2} \leq C \int_{B_{\sigma}\left( y\right)} \left\lvert d\alpha\right\rvert^{2} \leq C \left( \int_{B_{\sigma}\left( y\right)} \left\lvert F - \left(F\right)_{B_{\sigma}\left( x\right)}\right\rvert^{2} +  \int_{B_{\sigma}\left( y\right)} \left\lvert G_{3}\right\rvert^{2}\right).
		\end{align*}
		Now the last term is estimated exactly as was done in Lemma \ref{boundary estimate lemma} and we deduce 
		\begin{multline}\label{dalpha esti interior interior}
			\int_{B_{\sigma}\left( y\right)} \left\lvert \nabla \alpha\right\rvert^{2} \leq C\int_{B_{\sigma}\left( y\right)} \left\lvert F - \left(F\right)_{B_{\sigma}\left( y\right)}\right\rvert^{2} + c\sigma^{n}\left[ \Theta^{A}\left( \sigma\right)\right]^{2}\left[ \nabla u\right]_{\mathrm{BMO}\left(\Omega\right)}^{2}  \\ + c\left[ A\right]_{\mathscr{L}^{2,n}_{\frac{1}{1+ \left\lvert \log r \right\rvert}}\left(\Omega\right)}^{2}\left\lVert \nabla u \right\rVert_{L^{2}\left( \Omega \right)}^{2}. 
		\end{multline}
		Note $\beta := u -\alpha$ satisfies the homogeneous constant coefficient system 
		\begin{align}\label{homogeneous interior}
			\int_{B_{\sigma}\left( y\right)}\left\langle  \left( A\right)_{B_{\sigma}\left( y\right)} d\beta; d\psi \right\rangle +  	\int_{B_{\sigma}\left( y\right)}\left\langle d^{\ast}\beta; d^{\ast}\psi \right\rangle= 0 
		\end{align}
		for all $\psi \in W^{1,2}_{d^{\ast}, T}\left( B_{\sigma}\left( y\right); \varLambda^{k}\right),$ as $d^{\ast}\beta = 0.$
		Hence $\beta$ satisfies the decay estimates ( see Theorem 3 in \cite{Sil_linearregularity} ). Thus, by standard arguments, for any $0 < \rho < \sigma ,$ we have 
		\begin{align*}
			\int_{B_{\rho}\left( y \right)} &\left\lvert \nabla u - \left( \nabla u \right)_{B_{\rho}\left( y \right)}\right\rvert^{2} \\
			&\leq c \left(\frac{\rho}{\sigma}\right)^{n+2} 	\int_{B_{\sigma}\left( y \right)} \left\lvert \nabla u - \left( \nabla u \right)_{B_{\sigma}\left( y \right)}\right\rvert^{2} + c\int_{B_{\sigma}\left( y \right)} \left\lvert \nabla \alpha \right\rvert^{2}. 
		\end{align*}
		By \eqref{dalpha esti interior interior} and the iteration lemma, for any $y \in \Omega_{0},$ this implies 
		\begin{align*}
			\fint_{B_{\rho}\left( y \right)} &\left\lvert \nabla u - \left( \nabla u \right)_{B_{\rho}\left( y \right)}\right\rvert^{2} \\
			&\begin{multlined}[b]
				\leq c\fint_{B_{\sigma}\left( y\right)} \left\lvert F - \left(F\right)_{B_{\sigma}\left( y\right)}\right\rvert^{2} + c\left[ \Theta^{A}\left( \sigma\right)\right]^{2}\left[ \nabla u\right]_{\mathrm{BMO}\left(\Omega\right)}^{2}  \\ + c\sigma^{-n}\left( 1 + \left[ A\right]_{\mathscr{L}^{2,n}_{\frac{1}{1+ \left\lvert \log r \right\rvert}}\left(\Omega\right)}^{2}\right) \left\lVert \nabla u \right\rVert_{L^{2}\left( \Omega \right)}^{2}, \quad \text{ for any }0 < \rho < \sigma .
			\end{multlined} 
		\end{align*} 
		In view of the obvious estimate for $\rho \geq \sigma,$ taking supremum, we arrive at
		\begin{align}\label{grad u bmo estimate Omega0}
			\left[ \nabla u \right]_{\mathrm{BMO}\left( \Omega_{0}\right)}^{2} \leq C_{0} \left[ \Theta^{A}\left( \sigma\right)\right]^{2}\left[ \nabla u \right]_{\mathrm{BMO}\left(\Omega\right)}^{2} + \sigma^{-n}\kappa_{0} , 
		\end{align}
		for every $0 < \sigma < d_{0}/2,$	where 
		\begin{align*}
			\kappa_{0} := \left(1 +	\left[ A\right]_{\mathscr{L}^{2,n}_{\frac{1}{1+ \left\lvert \log r \right\rvert}}\left(\Omega\right)}^{2}\right)\left\lVert \nabla u  \right\rVert_{L^{2}\left(\Omega\right)}^{2} + \left[ F\right]_{\mathrm{BMO}\left(\Omega\right)}^{2}. 
		\end{align*}
		
		By Lemma \ref{boundary estimate lemma}, for each $1 \leq i \leq l,$ there exists constants $C_{i}, c_{1}^{i}>0$ such that 
		\begin{align}\label{grad u bmo boundary estimate i from 1 to l}
			\left[ \nabla u \right]&_{\mathrm{BMO}\left( \Phi_{x_{i}} \left( B^{+}_{R_{x_{i}}/2}\right)\right)}^{2} \notag \\&\leq C_{i} \left\lbrace \left(   \left[ \Theta^{A}\left( 2c^{i}_{1}\sigma\right)\right]^{2} + \left[ \theta \left( \sigma\right)\right]^{2}\left( \left\lVert A \right\rVert_{L^{\infty}}^{2} + 1 \right)\right) \left[ \nabla u \right]_{\mathrm{BMO}\left(\Omega\right)}^{2} + \sigma^{-n}\kappa \right\rbrace , 
		\end{align}
		for every $0 < \sigma < R_{x_{i}}/8,$ where 
		\begin{multline}\label{definition of kappa}
			\kappa := \left(\left\lVert A \right\rVert_{L^{\infty}\left( \Omega\right)}^{2} + 	\left[ A\right]_{\mathscr{L}^{2,n}_{\frac{1}{1+ \left\lvert \log r \right\rvert}}\left(\Omega\right)}^{2} + 1 \right)\left\lVert \nabla u  \right\rVert_{L^{2}\left(\Omega\right)}^{2} + \left\lVert u  \right\rVert_{L^{2}\left(\Omega\right)}^{2} \\ + \left\lVert F \right\rVert_{L^{2}\left(\Omega\right)}^{2} + \left[ F\right]_{\mathrm{BMO}\left(\Omega\right)}^{2}. 
		\end{multline}
		Take $c^{0}_{1}=1$ and set 
		\begin{align*}
			\sigma_{1} := \min \left\lbrace \frac{d_{0}}{2}, \min\limits_{1\leq i\leq l}\left\lbrace \frac{R_{x_{i}}}{8}\right\rbrace \right\rbrace ,  \tilde{C}:= \max\limits_{0 \leq i \leq l} \left\lbrace C_{i}\right\rbrace \text{ and } \tilde{c}_{1}:= \max\limits_{0 \leq i \leq l} \left\lbrace c^{i}_{1}\right\rbrace. 
		\end{align*}
		Now choose $0 < \sigma_{0} < \sigma_{1}$ small enough such that
		\begin{align}
			\tilde{C} \left(   \left[ \Theta^{A}\left( 2\tilde{c}_{1}\sigma_{0}\right)\right]^{2} + \left[ \theta \left( \sigma_{0}\right)\right]^{2}\left( \left\lVert A \right\rVert_{L^{\infty}}^{2} + 1 \right)\right) &\leq \frac{1}{2\left(l +1\right)} . 
		\end{align}	
		In view of \eqref{gradu bmo estimate sum of splitting}, \eqref{grad u bmo estimate Omega0} and \eqref{grad u bmo boundary estimate i from 1 to l}, this implies the estimate 
		\begin{align*}
			\left[ \nabla u\right]^{2}_{\mathrm{BMO}\left(\Omega\right)} &\leq \left(\sum\limits_{j=0}^{l} \frac{1}{2\left(l +1\right)}\right)\left[ \nabla u \right]_{\mathrm{BMO}\left(\Omega\right)}^{2} + \tilde{C}\sigma_{0}^{-n}\kappa \\
			&\leq \frac{1}{2}\left[ \nabla u \right]_{\mathrm{BMO}\left(\Omega\right)}^{2} + \tilde{C}\sigma_{0}^{-n}\kappa, 
		\end{align*}
		where $\kappa$ is as in \eqref{definition of kappa} (note $\kappa_{0} \leq \kappa$). But this implies the estimate
		\begin{align}\label{prefinal estimate}
			\left[ \nabla u\right]^{2}_{\mathrm{BMO}\left(\Omega\right)} \leq C \left( \left\lVert u  \right\rVert_{W^{1,2}\left(\Omega\right)}^{2}  + \left\lVert F \right\rVert_{L^{2}\left(\Omega\right)}^{2} + \left[ F\right]_{\mathrm{BMO}\left(\Omega\right)}^{2}\right), 
		\end{align}
		where the constant $C >0$ depends only on 
		$$ n, k, N, \gamma, \Omega, \Theta^{A}, \left\lVert A\right\rVert_{L^{\infty}(\Omega)} \text{ and } \left[ A \right]_{\mathscr{L}^{2,n}_{\frac{1}{1+ \left\lvert \log r \right\rvert}}\left(\Omega\right)}. $$ 
		But since  $u \in W^{1,2}_{d^{\ast}, T}$ solves \eqref{Main}, using Young's inequality with $\varepsilon>0$ small enough and Proposition \ref{poincare gaffney},  we have
		\begin{align*}
			\left\lVert u  \right\rVert_{W^{1,2}\left(\Omega\right)}^{2} \leq C \left\lVert du  \right\rVert_{L^{2}\left(\Omega\right)}^{2} \leq C\left\lVert F \right\rVert_{L^{2}\left( \Omega \right)}^{2}.
		\end{align*}
		Combining this with \eqref{prefinal estimate}, we have \eqref{grad u bmo estimate A only} and consequently, \eqref{du bmo estimate A only}. 
	\end{proof}
	\subsection{Uniqueness}
	Now we discuss the uniqueness of solutions for our systems. 
	\begin{lemma}\label{uniqueness lemma}
		Let $\Omega \subset \mathbb{R}^{n}$ be  open, bounded with $\partial \Omega \in C^{2}.$  Let $A \in L^{\infty}\cap \mathrm{V}\mathscr{L}_{\left( 1+ \left\lvert \log r \right\rvert \right)^{-1}}^{2,n}\left( \Omega; \mathcal{L}\left( \varLambda^{k+1}; \varLambda^{k+1}\right)\right),$ $B \in L^{\infty}\cap \mathrm{V}\mathscr{L}_{\left( 1+ \left\lvert \log r \right\rvert \right)^{-1}}^{2,n}\left( \Omega; \mathcal{L}\left( \varLambda^{k}; \varLambda^{k}\right)\right)$ be uniformly Legendre-elliptic with ellipticity constants $\gamma_{A}, \gamma_{B} >0$, respectively. Let $u \in W^{d,2}_{T}\left( \Omega ; \varLambda^{k}\right)$ be a weak solution to the following 
		\begin{align}\label{Main uniqueness}
			\left\lbrace \begin{aligned}
				d^{\ast} \left( A\left( x \right) du \right)  &= 0 &&\text{ in } \Omega, \\
				d^{\ast} \left( B\left(x\right) u\right) &=0 &&\text{  in } \Omega, \\
				\nu\wedge u &= 0 &&\text{ on } \partial\Omega,
			\end{aligned}	\right. 
		\end{align}
		Then $u=0$ if and only if $\mathcal{H}^{k}_{T}\left(  \Omega;\varLambda^{k}\right) = \left\lbrace 0 \right\rbrace.$
	\end{lemma}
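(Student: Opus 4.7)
The equivalence is proved by establishing the two implications separately.

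For the forward direction, assume $\mathcal{H}^{k}_{T}(\Omega;\varLambda^{k}) = \{0\}$ and let $u \in W^{d,2}_{T}$ be a weak solution. The plan is to extract information from the two equations in sequence. First, test the weak form of $d^{\ast}(A\,du) = 0$ against $\phi = u$, which is admissible since $u \in W^{d,2}_{T}$; this yields $\int_{\Omega}\langle A\,du;du\rangle = 0$, and Legendre ellipticity of $A$ forces $du = 0$. Second, with $u$ closed, $\nu\wedge u = 0$, and $u$ trivially orthogonal to $\mathcal{H}^{k}_{T} = \{0\}$, the Hodge-type solvability result (Theorem 8.16 of \cite{CsatoDacKneuss}) furnishes $\alpha \in W^{1,2}_{T}(\Omega;\varLambda^{k-1})$ with $d\alpha = u$; modifying $\alpha$ by an exact form $d\beta$ (solving a Hodge Laplace problem) and by a harmonic field, both of which leave $d\alpha$ unchanged, we may take $\alpha \in W^{1,2}_{d^{\ast},T}\cap (\mathcal{H}^{k-1}_{T})^{\perp}$. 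Third, test the weak formulation of $d^{\ast}(Bu) = 0$ against this $\alpha$: this is legitimate because the identity $d(W^{1,2}_{d^{\ast},T}) = d(W^{1,2}_{0})$ underlying Proposition~\ref{weak formulation change linear lemma} lets us find $\phi_{n} \in C^{\infty}_{c}$ with $d\phi_{n} \to d\alpha$ in $L^{2}$, so that $\int_{\Omega}\langle Bu;d\alpha\rangle$ is the limit of $\int_{\Omega}\langle Bu;d\phi_{n}\rangle = 0$. Hence
\[
0 \;=\; \int_{\Omega}\langle Bu;d\alpha\rangle \;=\; \int_{\Omega}\langle B\,d\alpha;d\alpha\rangle \;\geq\; \gamma_{B}\,\lVert d\alpha\rVert_{L^{2}}^{2},
\]
so $d\alpha = 0$ and $u = 0$.

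For the converse, suppose $\mathcal{H}^{k}_{T} \neq \{0\}$ and fix $0 \neq h \in \mathcal{H}^{k}_{T}$; I would construct a nontrivial $u = h + d\alpha$ by Lax--Milgram on $V := W^{1,2}_{d^{\ast},T}(\Omega;\varLambda^{k-1}) \cap (\mathcal{H}^{k-1}_{T})^{\perp}$. Setting
\[
a(\alpha,\beta) := \int_{\Omega}\langle B\,d\alpha;d\beta\rangle, \qquad \ell(\beta) := -\int_{\Omega}\langle Bh;d\beta\rangle,
\]
Legendre ellipticity of $B$ combined with Proposition~\ref{poincare gaffney} yields coercivity of $a$ on $V$, continuity of $a$ and $\ell$ are immediate, so there is a unique $\alpha \in V$ with $a(\alpha,\beta) = \ell(\beta)$ for all $\beta \in V$. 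Put $u := h + d\alpha$. Then $du = 0$, so $d^{\ast}(A\,du) = 0$ trivially; $\nu \wedge u = 0$ because $\nu \wedge h = 0$ and $\alpha \in W^{1,2}_{T}$ forces $\nu \wedge d\alpha = 0$ in the trace sense (exterior derivative commutes with pullback to $\partial\Omega$, justified by smooth approximation in $W^{1,2}_{T}$), hence $u \in W^{d,2}_{T}$; the Euler--Lagrange identity $\int_{\Omega}\langle Bu;d\beta\rangle = 0$ for $\beta \in V$ extends to $\beta \in C^{\infty}_{c}$ via the same $d(V) = d(W^{1,2}_{0})$ argument (any $\phi \in C^{\infty}_{c}$ admits $\beta \in V$ with $d\beta = d\phi$), giving $d^{\ast}(Bu) = 0$ distributionally. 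Finally, $u \neq 0$: if $h + d\alpha = 0$, integration by parts gives
\[
\lVert h\rVert_{L^{2}}^{2} \;=\; -\langle h;d\alpha\rangle_{L^{2}} \;=\; -\langle d^{\ast}h;\alpha\rangle_{L^{2}} - \int_{\partial\Omega}\langle \nu\wedge\alpha;h\rangle \;=\; 0
\]
since $d^{\ast}h = 0$ and $\nu \wedge \alpha = 0$, contradicting $h \neq 0$.

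The main technical obstacle is the constant shuttling between test-function classes --- $W^{1,2}_{0}$, $W^{1,2}_{T}$, $W^{1,2}_{d^{\ast},T}$, and $V$ --- together with verifying that the constructed $u = h + d\alpha$ genuinely lies in $W^{d,2}_{T}$. Both are handled by the paper's gauge-invariance philosophy (the identities $d(W^{1,2}_{0}) = d(W^{1,2}_{T}) = d(W^{1,2}_{d^{\ast},T}) = d(V)$ behind Proposition~\ref{weak formulation change linear lemma}), supplemented by the boundary-trace implication $\nu \wedge \alpha = 0 \Rightarrow \nu \wedge d\alpha = 0$ which guarantees that adding $d\alpha$ to $h$ preserves the vanishing tangential trace.
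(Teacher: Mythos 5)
Your argument is correct and runs along essentially the same lines as the paper: ellipticity of $A$ forces $du=0$; then, because $\mathcal{H}^{k}_{T}=\{0\}$, one writes $u=d\alpha$ for a potential with the right boundary condition and ellipticity of $B$ kills $d\alpha$; conversely, one perturbs a nonzero $h\in\mathcal{H}^{k}_{T}$ by an exact corrector solving the $B$-equation. The one substantive difference is in the converse direction: you build the corrector by a direct Lax--Milgram argument on $V=W^{1,2}_{d^{\ast},T}(\Omega;\varLambda^{k-1})\cap(\mathcal{H}^{k-1}_{T})^{\perp}$, while the paper invokes Theorem \ref{Main theorem BMO linear A only} to produce $\alpha$. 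Your version is more self-contained, and in fact the BMO regularity of $\alpha$ that Theorem \ref{Main theorem BMO linear A only} delivers is never used in the uniqueness argument, so nothing is lost. You also stop at $d\alpha=0\Rightarrow u=0$ in the first implication, whereas the paper goes on to conclude $\beta=0$ via Proposition \ref{poincare gaffney}; your stopping point is sufficient and slightly more economical.

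One wording to tighten: you assert that $\phi=u$ is an admissible test function ``since $u\in W^{d,2}_{T}$.'' Strictly speaking $u$ is only in $W^{d,2}_{T}$, not in the $W^{1,2}$-type test class, so the identity $\int_{\Omega}\langle A\,du;du\rangle=0$ is not obtained by literal substitution. The correct mechanism is the same gauge-invariance argument you use one step later: $du$ is closed, has vanishing tangential trace (because $\nu\wedge u=0$), and is $L^{2}$-orthogonal to $\mathcal{H}^{k+1}_{T}$ (integrate $\langle du;\eta\rangle$ by parts against $\eta\in\mathcal{H}^{k+1}_{T}$, using $d^{\ast}\eta=0$ and $\nu\wedge u=0$), so $du=d\psi$ for some $\psi\in W^{1,2}_{T}(\Omega;\varLambda^{k})$, and testing against $\psi$ gives the desired energy identity. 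The paper is equally terse at this point (``ellipticity of $A$ implies $du=0$''), but your justification as literally written is the one place that would not survive scrutiny without the patch above.
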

	\begin{proof}
		Suppose there exists a non-zero $h \in \mathcal{H}^{k}_{T}\left(  \Omega;\varLambda^{k}\right).$ Since $h$ is smooth, we can use Theorem \ref{Main theorem BMO linear A only} to find $\alpha \in W^{1,2}_{d^{\ast}, T}\left( \Omega ; \varLambda^{k-1}\right),$ a weak solution to
		\begin{align*}
			\left\lbrace \begin{aligned}
				d^{\ast} \left( B\left( x \right) d\alpha \right)  &= d^{\ast} \left( B\left( x \right) h \right) &&\text{ in } \Omega, \\
				d^{\ast} \alpha &= 0 &&\text{  in } \Omega, \\
				\nu\wedge \alpha &= 0 &&\text{ on } \partial\Omega,
			\end{aligned}	\right. 
		\end{align*}
		such that $\nabla \alpha \in \mathrm{BMO}.$ Then setting $u = d\alpha -h,$ it is easy to see that $u \in W^{d,2}_{T}\left( \Omega ; \varLambda^{k}\right)$ is a solution to \eqref{Main uniqueness}. But as $u=0$ would imply $h=d\alpha,$ which is impossible as no nontrivial harmonic field can be exact, $u$ is a nontrivial solution to \eqref{Main uniqueness}. For the reverse implication, ellipticity of $A$ implies we must have $du = 0.$ Since $\mathcal{H}^{k}_{T}\left(  \Omega;\varLambda^{k}\right) = \left\lbrace 0 \right\rbrace,$ by Hodge decomposition, there exists $\beta \in W^{1,2}_{d^{\ast}, T}\left( \Omega ; \varLambda^{k-1}\right)$ such that $u= d\beta.$ But now ellipticity of $B$ and Proposition \ref{poincare gaffney} implies $\beta = 0.$ This completes the proof. 
	\end{proof}
	\section{Main results}\label{main results as corollaries}
	\begin{theorem}\label{Main theorem BMO linear A u0}
		Let $\Omega \subset \mathbb{R}^{n}$ be  open, bounded with $\partial \Omega \in C^{2}.$   Let $A \in L^{\infty}\cap \mathrm{V}\mathscr{L}_{\left( 1+ \left\lvert \log r \right\rvert \right)^{-1}}^{2,n}\left( \Omega; \mathcal{L}\left( \varLambda^{k+1}; \varLambda^{k+1}\right)\right)$ be uniformly Legendre-elliptic with ellipticity constant $\gamma >0$ and let $F \in \mathrm{BMO}\left( \Omega; \varLambda^{k+1}\right)$. Let $u_{0}\in W^{1,2}\left( \Omega ; \varLambda^{k}\right)$ be such that $\nabla u_{0} \in \mathrm{BMO}\left( \Omega; \varLambda^{k}\otimes\mathbb{R}^{n}\right).$
		Then there exists $u \in W^{1,2}\left( \Omega ; \varLambda^{k}\right),$ which is a weak solution for the following 
		\begin{align}\label{Main A}
			\left\lbrace \begin{aligned}
				d^{\ast} \left( A\left( x \right) du \right)  &= d^{\ast} F &&\text{ in } \Omega, \\
				d^{\ast} u &= 0 &&\text{  in } \Omega, \\
				\nu\wedge u &= \nu\wedge u_{0} &&\text{ on } \partial\Omega,
			\end{aligned}	\right. 
		\end{align}
		then $\nabla u \in \mathrm{BMO} \left( \Omega; \varLambda^{k}\otimes \mathbb{R}^{n}\right)$ and there exists a constant 
		\begin{align*}
			C = C\left( n, k, N, \gamma, \Omega, \Theta^{A}, \|A\|_{L^{\infty}}, \left[ A \right]_{\mathscr{L}^{2,n}_{\frac{1}{1+ \left\lvert \log r \right\rvert}}} \right) >0
		\end{align*} such that we have the estimates
		\begin{align}\label{du bmo estimate A u0}
			\left[ du\right]_{\mathrm{BMO}\left(\Omega\right)} &\leq   C \left(  \left[ F\right]_{\mathrm{BMO}\left( \Omega\right)} + \left\lVert F \right\rVert_{L^{2}\left(\Omega\right)} + \left[ du_{0}\right]_{\mathrm{BMO}\left( \Omega\right)} + \left\lVert du_{0} \right\rVert_{L^{2}\left(\Omega\right)}\right)
		\end{align}
		and 
		\begin{align}
			\left[ \nabla u\right]_{\mathrm{BMO}\left(\Omega\right)} \leq   C \left(  \left\lVert u \right\rVert_{L^{2}\left(\Omega\right)} + \left[ \left( F, \nabla u_{0} \right)\right]_{\mathrm{BMO}\left( \Omega\right)} + \left\lVert \left( F, \nabla u_{0} \right) \right\rVert_{L^{2}\left(\Omega\right)} \right). \label{grad u bmo estimate A u0}
		\end{align}
		Moreover, the solution is unique if and only if $\mathcal{H}^{k}_{T}\left(  \Omega;\varLambda^{k}\right) = \left\lbrace 0 \right\rbrace.$
	\end{theorem}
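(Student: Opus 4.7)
I would reduce Theorem \ref{Main theorem BMO linear A u0} to Theorem \ref{Main theorem BMO linear A only} by absorbing the nonhomogeneous tangential trace $\nu\wedge u_0$ into the equation. The first step is to construct a ``gauge-corrected'' extension $\tilde{u}_0 \in W^{1,2}(\Omega;\varLambda^k)$ that shares the tangential trace and has vanishing codifferential, together with good $\mathrm{BMO}$ control of its gradient; the second step is to substitute $v := u - \tilde{u}_0$ and invoke Theorem \ref{Main theorem BMO linear A only} on the resulting homogeneous problem for $v$.

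\textbf{Construction of $\tilde{u}_0$.} To build $\tilde{u}_0$, I would solve the Hodge-Laplacian-type problem for a $(k-1)$-form $\alpha$,
\begin{equation*}
d^{\ast} d\alpha = d^{\ast} u_0, \quad d^{\ast}\alpha = 0 \quad \text{in } \Omega, \qquad \nu\wedge\alpha = 0 \quad \text{on } \partial\Omega,
\end{equation*}
and then set $\tilde{u}_0 := u_0 - d\alpha$. Then $d^{\ast}\tilde{u}_0 = 0$ is automatic, and since $\nu\wedge\alpha = 0$ on $\partial\Omega$ propagates to $\nu\wedge d\alpha = 0$ (tangential trace commutes with $d$), we retain $\nu\wedge \tilde{u}_0 = \nu\wedge u_0$. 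Because $\nabla u_0 \in \mathrm{BMO}$ forces $d^{\ast}u_0 \in \mathrm{BMO}$, and the system for $\alpha$ has smooth constant coefficients (the Hodge Laplacian), classical second-order Campanato regularity on $C^2$ domains yields $\nabla^2\alpha \in \mathrm{BMO}$, whence $\nabla(d\alpha) \in \mathrm{BMO}$, so that
\begin{equation*}
\|\nabla \tilde{u}_0\|_{\mathrm{BMO}(\Omega)} + \|\nabla \tilde{u}_0\|_{L^2(\Omega)} \leq C\bigl(\|\nabla u_0\|_{\mathrm{BMO}(\Omega)} + \|\nabla u_0\|_{L^2(\Omega)}\bigr).
\end{equation*}

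\textbf{Application of Theorem \ref{Main theorem BMO linear A only}.} Writing $v := u - \tilde{u}_0$, the system \eqref{Main A} transforms into
\begin{equation*}
d^{\ast}(A\, dv) = d^{\ast}\bigl(F - A\, d\tilde{u}_0\bigr)\text{ in } \Omega, \quad d^{\ast} v = 0 \text{ in } \Omega, \quad \nu\wedge v = 0 \text{ on } \partial\Omega.
\end{equation*}
By Remark \ref{Remark about Mean Oscillation space properties}(ii), $A \in L^{\infty}\cap \mathrm{V}\mathscr{L}_{\left( 1+ \left\lvert \log r \right\rvert \right)^{-1}}^{2,n}$ is a pointwise multiplier on $\mathrm{BMO}(\Omega)$, so $A\, d\tilde{u}_0 \in \mathrm{BMO}$ with norm controlled by $\|d\tilde{u}_0\|_{\mathrm{BMO}}$, and therefore $F - A\, d\tilde{u}_0 \in \mathrm{BMO}$. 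Theorem \ref{Main theorem BMO linear A only} then yields $v \in W^{1,2}_{d^{\ast},T}$ with $\nabla v \in \mathrm{BMO}$ and the corresponding estimates; existence follows via the approximation-plus-smooth-coefficient scheme underlying Lemma \ref{approximation lemma}. Setting $u := v + \tilde{u}_0$ recovers a solution of \eqref{Main A}, and the bounds \eqref{du bmo estimate A u0}--\eqref{grad u bmo estimate A u0} fall out of the triangle inequality together with Step 1 and Step 2. Uniqueness when $\mathcal{H}^{k}_{T} = \{0\}$ is immediate from Lemma \ref{uniqueness lemma} applied with $B = \mathrm{Id}$.

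\textbf{Main obstacle.} The principal difficulty lies in the extension step: producing $\tilde{u}_0$ with $\nabla \tilde{u}_0 \in \mathrm{BMO}$ requires \emph{second-order} $\mathrm{BMO}$ regularity for the smooth-coefficient Hodge Laplacian under the tangential (absolute) boundary condition --- one derivative beyond what Theorem \ref{Main theorem BMO linear A only} supplies. For the constant coefficient problem this is a classical Calder\'{o}n--Zygmund/Campanato result, but it must be set up carefully in conjunction with the gauge $d^{\ast}\alpha = 0$ and the compatibility of tangential traces under $d$; everything else is bookkeeping that relies crucially on the $\mathrm{BMO}$-multiplier property of $L^{\infty} \cap \mathrm{V}\mathscr{L}_{\left( 1+ \left\lvert \log r \right\rvert \right)^{-1}}^{2,n}$.
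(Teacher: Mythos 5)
Your proof is correct and takes essentially the same route as the paper. Both decompose $u$ into a piece to which Theorem \ref{Main theorem BMO linear A only} applies directly, plus a Hodge correction transferring the inhomogeneous data: the paper writes $u = \alpha + \beta + u_0$ with $\alpha \in W^{1,2}_{d^{\ast},T}\cap(\mathcal{H}^{k}_{T})^{\perp}$ solving $d^{\ast}(A\,d\alpha)=d^{\ast}(F - A\,du_0)$ (your $v$) and $\beta$ solving the first-order Hodge system $d\beta=0$, $d^{\ast}\beta=-d^{\ast}u_0$, $\nu\wedge\beta=0$, which is exactly your $-d\alpha$ modulo a harmonic field, so the underlying decomposition is identical. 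The only real difference is how the Hodge correction is produced: you go through a $(k-1)$-form potential solving the second-order Hodge Laplacian and therefore need the second-order $\mathrm{BMO}$ estimate $\nabla^2\alpha\in\mathrm{BMO}$ for the constant-coefficient absolute-boundary problem (the ``main obstacle'' you flag), whereas the paper solves the first-order Hodge system directly and invokes the first-order $\mathrm{BMO}$ estimate from \cite{Sil_linearregularity}, which is marginally more economical and sidesteps that issue entirely.
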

	\begin{remark}
		If $\mathcal{H}^{k}_{T} \neq \left\lbrace 0 \right\rbrace,$ then any weak solution $v \in W^{1,2}_{d^{\ast}, T}\left( \Omega ; \varLambda^{k}\right)$ of \eqref{Main A} differs from $u$ by a harmonic field $h \in \mathcal{H}^{k}_{T}$ and thus all weak solutions satisfy the estimates \eqref{du bmo estimate A u0} and \eqref{grad u bmo estimate A u0}. Also, if $\mathcal{H}^{k}_{T} = \left\lbrace 0 \right\rbrace,$ then the $L^{2}$ norm of $u$ can be dropped from the RHS of the estimate \eqref{grad u bmo estimate A u0} in view of uniqueness. 
	\end{remark}
	\begin{proof}
		First we find $\alpha \in W^{1,2}_{d^{\ast}, T}\left( \Omega ; \varLambda^{k}\right) \cap \left( \mathcal{H}^{k}_{T}\right)^{\perp}$, a weak solution to
		\begin{align*}
			\left\lbrace \begin{aligned}
				d^{\ast} \left( A\left( x \right) d\alpha \right)  &= d^{\ast} F - d^{\ast}\left( A \left(x\right) du_{0}\right)&&\text{ in } \Omega, \\
				d^{\ast} \alpha &= 0 &&\text{  in } \Omega, \\
				\nu\wedge \alpha &= 0 &&\text{ on } \partial\Omega.
			\end{aligned}	\right. 
		\end{align*}
		Existence follows from applying Lax-Milgram in the space $W^{1,2}_{d^{\ast}, T}$ ( see \cite{Sil_linearregularity} ). Now, since $\nabla u_{0} \in \mathrm{BMO}\left( \Omega; \varLambda^{k}\otimes\mathbb{R}^{n}\right),$ we have $du_{0} \in \mathrm{BMO}\left( \Omega; \varLambda^{k+1}\right)$ and using Remark \ref{Remark about Mean Oscillation space properties} (ii) componentwise, we deduce $A \left(x\right) du_{0} \in \mathrm{BMO}\left( \Omega; \varLambda^{k+1}\right).$ Thus, Theorem \ref{Main theorem BMO linear A only} implies $\nabla \alpha \in \mathrm{BMO}$ with corresponding estimates. Now we find $\beta \in W^{1,2}_{d^{\ast}, T}\left( \Omega ; \varLambda^{k}\right)\cap \left( \mathcal{H}^{k}_{T}\right)^{\perp},$ a weak solution for the following 
		\begin{align*}
			\left\lbrace \begin{aligned}
				d\beta  &= 0 &&\text{ in } \Omega, \\
				d^{\ast} \beta &= - d^{\ast}u_{0} &&\text{  in } \Omega, \\
				\nu\wedge \beta &= 0 &&\text{ on } \partial\Omega.
			\end{aligned}	\right. 
		\end{align*}
		By standard estimates, (see e.g \cite{Sil_linearregularity} ), we deduce $\nabla \beta \in \mathrm{BMO}$ with corresponding estimates. Finally, we set $u = \alpha + \beta + u_{0}.$
	\end{proof}
	As a consequence and Stampacchia interpolation and duality, we also have the following. 
	\begin{theorem}\label{Main theorem Lp linear A u0}
		Let $\Omega \subset \mathbb{R}^{n}$ be  open, bounded with $\partial \Omega \in C^{2}.$  Let $1 < p < \infty.$ Let $A \in L^{\infty}\cap \mathrm{V}\mathscr{L}_{\left( 1+ \left\lvert \log r \right\rvert \right)^{-1}}^{2,n}\left( \Omega; \mathcal{L}\left( \varLambda^{k+1}; \varLambda^{k+1}\right)\right)$ be uniformly Legendre-elliptic with ellipticity constant $\gamma >0$ and let $F \in L^{p}\left( \Omega; \varLambda^{k+1}\right)$. Let $u_{0}\in W^{1,p}\left( \Omega ; \varLambda^{k}\right)$ Then there exists $u \in W^{1,p}\left( \Omega ; \varLambda^{k}\right),$ which satisfies  
		\begin{align*}
			\left\lbrace \begin{aligned}
				d^{\ast} \left( A\left( x \right) du \right)  &= d^{\ast} F &&\text{ in } \Omega, \\
				d^{\ast} u &= 0 &&\text{  in } \Omega, \\
				\nu\wedge u &= \nu\wedge u_{0} &&\text{ on } \partial\Omega,
			\end{aligned}	\right. 
		\end{align*}
		in the weak sense and there exists a constant 
		\begin{align*}
			C = C\left( n, k, N, \gamma, \Omega, p, \Theta^{A}, \|A\|_{L^{\infty}}, \left[ A \right]_{\mathscr{L}^{2,n}_{\frac{1}{1+ \left\lvert \log r \right\rvert}}} \right) >0
		\end{align*} such that we have the estimates
		\begin{align*}
			\left\lVert du\right\rVert_{L^{p}\left(\Omega\right)} &\leq   C \left( \left\lVert F \right\rVert_{L^{p}\left(\Omega\right)} +  \left\lVert du_{0} \right\rVert_{L^{p}\left(\Omega\right)}\right)
		\end{align*}
		and 
		\begin{align*}
			\left\lVert \nabla u\right\rVert_{L^{p}\left(\Omega\right)}  \leq   C \left(  \left\lVert u \right\rVert_{L^{p}\left(\Omega\right)} + \left\lVert  F \right\rVert_{L^{p}\left(\Omega\right)} + \left\lVert  u_{0} \right\rVert_{W^{1,p}\left(\Omega\right)}\right). 
		\end{align*}
		Moreover, the solution is unique if and only if $\mathcal{H}^{k}_{T}\left(  \Omega;\varLambda^{k}\right) = \left\lbrace 0 \right\rbrace.$
	\end{theorem}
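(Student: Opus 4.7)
The plan is to derive the $L^{p}$ estimates from Theorem \ref{Main theorem BMO linear A u0} by reducing to homogeneous boundary data, then interpolating with $L^{2}$ for $p \geq 2$ and dualizing for $1 < p < 2$. First I would write $u = v + u_{0}$ so that $v \in W^{1,p}$ satisfies $\nu\wedge v = 0$ on $\partial\Omega$ and the system becomes
\begin{align*}
\left\lbrace \begin{aligned}
d^{\ast}(A(x)\,dv) &= d^{\ast}\tilde{F} &&\text{ in } \Omega, \\
d^{\ast} v &= -d^{\ast}u_{0} &&\text{ in } \Omega, \\
\nu\wedge v &= 0 &&\text{ on } \partial\Omega,
\end{aligned}\right.
\end{align*}
with $\tilde{F} = F - A\,du_{0} \in L^{p}$. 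Using Hodge decomposition I would further split $v$ into a piece in $W^{1,p}_{d^{\ast},T} \cap (\mathcal{H}^{k}_{T})^{\perp}$ and a correction absorbing the constraint $d^{\ast}v = -d^{\ast}u_{0}$, the latter being controlled by $\|u_{0}\|_{W^{1,p}}$ via standard Hodge-type $L^{p}$ estimates. This reduces matters to proving $\|dv\|_{L^{p}} \leq C\|\tilde{F}\|_{L^{p}}$ for the operator $T: \tilde{F} \mapsto dv$ associated with the homogeneous boundary value problem.

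For $p \geq 2$, the operator $T$ is bounded $L^{2} \to L^{2}$ by the energy estimate (plug the solution into the weak formulation, use ellipticity of $A$ and Proposition \ref{poincare gaffney}), and bounded $\mathrm{BMO} \to \mathrm{BMO}$ by Theorem \ref{Main theorem BMO linear A u0}. Stampacchia's interpolation theorem between $L^{2}$ and $\mathrm{BMO}$ (e.g. Theorem 5.1 and Proposition 5.6 in \cite{giaquinta-martinazzi-regularity}, applied componentwise) then gives the desired $L^{p} \to L^{p}$ bound for every $2 \leq p < \infty$.

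For $1 < p < 2$, I would argue by duality. The transposed tensor $A^{\mathsf T}$ satisfies the same Legendre-ellipticity condition with the same constant and lies in the same class $L^{\infty} \cap \mathrm{V}\mathscr{L}^{2,n}_{(1+|\log r|)^{-1}}$ (pointwise transposition leaves the oscillation modulus invariant up to equivalence of norms on $\mathcal{L}(\varLambda^{k+1};\varLambda^{k+1})$). Hence the previous step applied to the adjoint problem yields an $L^{p'} \to L^{p'}$ estimate for $p' = p/(p-1) > 2$. Testing the equation for $u$ against solutions of the adjoint problem with $L^{p'}$ data, integrating by parts to move $d^{\ast}$ onto the test form, and using the boundary conditions $\nu\wedge u = 0 = \nu\wedge\phi$ to kill boundary terms, the $L^{p'}$ bound for the adjoint yields the $L^{p}$ bound for the original operator; uniqueness modulo $\mathcal{H}^{k}_{T}$ is established exactly as in Lemma \ref{uniqueness lemma}.

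The main technical obstacle will be the duality argument for $1 < p < 2$: since the second-order operator has an infinite-dimensional kernel (the exact forms), one must carefully specify the space of test forms and use the gauge-switching of Proposition \ref{weak formulation change linear lemma} to write both the primal and dual weak formulations on compatible function classes, and then verify that the integration-by-parts identity is valid under the regularity available on either side. Handling the $u_{0}$ term in the duality step requires an auxiliary $W^{1,p}$ Hodge-existence result with prescribed tangential trace, which can be obtained by approximation from the $L^{2}$ theory combined with the $L^{p}$ a priori estimate already proved for $p \geq 2$ and its dual.
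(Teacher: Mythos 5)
Your proposal follows essentially the same route as the paper: reduce to $u_0 = 0$, then interpolate the $L^2 \to L^2$ energy estimate with the $\mathrm{BMO} \to \mathrm{BMO}$ bound (from the global BMO theorem) via Stampacchia's interpolation theorem to cover $2 \leq p < \infty$, and handle $1 < p < 2$ by duality plus approximation for existence. The paper's proof is terser --- it dismisses the $u_0$ reduction with ``we can obviously assume $u_0 = 0$'' and does not spell out the transposed-coefficient and gauge-compatibility issues in the duality step --- but the underlying argument is the one you give, so your extra care there is supplementary detail rather than a departure.
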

	\begin{proof}
		We can obviously assume $u_{0}=0.$ Now if $p\geq 2,$ consider the `solution operator' that maps 
		$F \mapsto du$, where $u$ is the unique weak solution in $W^{1,2}_{d^{\ast}, T}\left( \Omega ; \varLambda^{k}\right) \cap \left( \mathcal{H}^{k}_{T}\right)^{\perp}.$ By standard $L^{2}$ estimates, this operator is bounded from $L^{2}$ to $L^{2}.$ 
		Theorem \ref{Main theorem BMO linear A only} proves the operator is also bounded from $\mathrm{BMO}$ to $\mathrm{BMO},$ where the operator norm in both cases can be bounded by a constant 
		\begin{align*}
			C = C\left( n, k, N, \gamma, \Omega, \Theta_{\frac{1}{1+ \left\lvert \log r \right\rvert}}^{A}, \|A\|_{L^{\infty}(\Omega)}, \left[ A \right]_{\mathrm{BMO}^{2}_{\frac{1}{1+ \left\lvert \log r \right\rvert}}\left(\Omega\right)} \right) >0. 
		\end{align*} Thus, Stampacchia's interpolation theorem ( see \cite{StampacchiaInterpolation} ) can be used in the standard way ( see Chapter 6 and 7 of \cite{giaquinta-martinazzi-regularity}, also Section 3 in \cite{Acquistapace_linear_elliptic_discontinuous}  ) to prove this operator is bounded from $L^{p}$ to $L^{p}$ for any $2 \leq p < \infty.$ Now the apriori $L^{p}$ estimates in the case $1 < p < 2$ follows by standard duality arguments when $F \in L^{2}\cap L^{p}.$ The existence for the case $1 < p < 2$ now follows from these estimates by approximating $F \in L^{p}$ by a sequence $\left\lbrace F_{s} \right\rbrace_{s \in \mathbb{N}} \subset L^{2}\cap L^{p}$ and passing to the limit. 	\end{proof}

	\begin{theorem}\label{Main theorem BMO linear AB}
		Let $\Omega \subset \mathbb{R}^{n}$ be  open, bounded with $\partial \Omega \in C^{2}.$   Let $A \in L^{\infty}\cap \mathrm{V}\mathscr{L}_{\left( 1+ \left\lvert \log r \right\rvert \right)^{-1}}^{2,n}\left( \Omega; \mathcal{L}\left( \varLambda^{k+1}; \varLambda^{k+1}\right)\right),$ $B \in L^{\infty}\cap \mathrm{V}\mathscr{L}_{\left( 1+ \left\lvert \log r \right\rvert \right)^{-1}}^{2,n}\left( \Omega; \mathcal{L}\left( \varLambda^{k}; \varLambda^{k}\right)\right)$ be uniformly Legendre-elliptic with ellipticity constants $\gamma_{A}, \gamma_{B} >0$, respectively. Let $F \in \mathrm{BMO}\left( \Omega; \varLambda^{k+1}\right)$ and $G \in \mathrm{BMO}\left( \Omega; \varLambda^{k}\right).$ Let $u_{0}\in L^{2}\left( \Omega ; \varLambda^{k}\right)$ be such that $du_{0} \in L^{2}\left( \Omega ; \varLambda^{k+1}\right)$, $u_{0} \in \mathrm{BMO}\left( \Omega; \varLambda^{k}\right)$ and $du_{0} \in \mathrm{BMO}\left( \Omega; \varLambda^{k+1}\right).$
		Then there exists $u \in W^{1,2}\left( \Omega ; \varLambda^{k}\right),$ which is a weak solution for the following 
		\begin{align}\label{Main AB}
			\left\lbrace \begin{aligned}
				d^{\ast} \left( A\left( x \right) du \right)  &= d^{\ast} F &&\text{ in } \Omega, \\
				d^{\ast} \left( B\left(x\right) u\right) &= d^{\ast}G &&\text{  in } \Omega, \\
				\nu\wedge u &= \nu \wedge u_{0} &&\text{ on } \partial\Omega,
			\end{aligned}	\right. 
		\end{align}
		such that $u \in \mathrm{BMO} \left( \Omega; \varLambda^{k}\right)$ and $d u \in \mathrm{BMO} \left( \Omega,; \varLambda^{k + 1}\right)$ and there exists a constant $C>0,$ depending only on 
		\begin{align*}
			n, k, N, \gamma_{A},\gamma_{B}, \Omega, \Theta^{A},\Theta^{B}, \left\lVert A\right\rVert_{L^{\infty}},\left\lVert B\right\rVert_{L^{\infty}}, \left[ A \right]_{\mathscr{L}^{2,n}_{\frac{1}{1+ \left\lvert \log r \right\rvert}}}, \left[ B \right]_{\mathscr{L}^{2,n}_{\frac{1}{1+ \left\lvert \log r \right\rvert}}}, 
		\end{align*} such that we have the estimates
		\begin{multline}\label{du bmo estimate A B}
			\left[ u\right]_{\mathrm{BMO}\left(\Omega\right)} + \left[ du\right]_{\mathrm{BMO}\left(\Omega\right)}  \\\leq   C \left(  \left\lVert u \right\rVert_{L^{2}\left(\Omega\right)} + \left[ \left( F, G, u_{0}, du_{0} \right) \right]_{\mathrm{BMO}\left( \Omega\right)} + \left\lVert \left( F, G, u_{0}, du_{0} \right) \right\rVert_{L^{2}\left(\Omega\right)}\right) .
		\end{multline}
		Moreover, the solution is unique if and only if $\mathcal{H}^{k}_{T}\left(  \Omega;\varLambda^{k}\right) = \left\lbrace 0 \right\rbrace.$ 
	\end{theorem}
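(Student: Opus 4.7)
The plan is to reduce to the homogeneous boundary case and then exploit the gauge freedom of the first equation to solve the two PDEs sequentially, with the $\mathrm{BMO}$ estimates in each step supplied by Theorem~\ref{Main theorem BMO linear A only}.

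As a first step, setting $w := u - u_{0}$, one finds that $w$ must satisfy $\nu \wedge w = 0$ on $\partial\Omega$ together with
\begin{align*}
d^{\ast}(A\,dw) = d^{\ast}\tilde F, \qquad d^{\ast}(Bw) = d^{\ast}\tilde G,
\end{align*}
where $\tilde F := F - A\,du_{0}$ and $\tilde G := G - B u_{0}$. By Remark~\ref{Remark about Mean Oscillation space properties}\,(ii), $A$ and $B$ are multipliers of $\mathrm{BMO}(\Omega)$, so both $\tilde F$ and $\tilde G$ lie in $L^{2} \cap \mathrm{BMO}$ with norms controlled by those of $(F,G,u_{0},du_{0})$. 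This reduces matters to the homogeneous boundary case, and it suffices to construct $w$ with the required estimates.

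The second step exploits the gauge freedom of the first equation: adding any exact form $d\psi$ leaves $d^{\ast}(A\,dw)$ unchanged, and the boundary condition is preserved provided $\nu\wedge\psi = 0$, since then $\nu\wedge d\psi = 0$ (the observation already used in Proposition~\ref{weak formulation change linear lemma}). I would decompose $w = v + d\psi$, where $v \in W^{1,2}_{d^{\ast},T}(\Omega;\varLambda^{k}) \cap (\mathcal{H}^{k}_{T})^{\perp}$ is the Lax--Milgram solution of
\begin{align*}
d^{\ast}(A\,dv) = d^{\ast}\tilde F, \qquad d^{\ast} v = 0, \qquad \nu \wedge v = 0,
\end{align*}
for which $dv,\nabla v \in \mathrm{BMO}$ with the corresponding estimate by Theorem~\ref{Main theorem BMO linear A only}, and $\psi \in W^{1,2}_{d^{\ast},T}(\Omega;\varLambda^{k-1}) \cap (\mathcal{H}^{k-1}_{T})^{\perp}$ is the solution of the corrector system
\begin{align*}
d^{\ast}(B\,d\psi) = d^{\ast}(\tilde G - Bv), \qquad d^{\ast}\psi = 0, \qquad \nu\wedge\psi = 0,
\end{align*}
produced by the same theorem applied with $B$ on $(k-1)$-forms. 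A direct check gives $dw = dv$, $d^{\ast}(Bw) = d^{\ast}(Bv) + d^{\ast}(B\,d\psi) = d^{\ast}\tilde G$, and $\nu \wedge w = 0$.

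The main obstacle is verifying that the source $\tilde G - Bv$ of the corrector system lies in $L^{2}\cap\mathrm{BMO}$, which is what permits the second application of Theorem~\ref{Main theorem BMO linear A only}. Since $B$ is a $\mathrm{BMO}$-multiplier, this reduces to an $\mathrm{BMO}$ estimate on $v$ itself, even though Theorem~\ref{Main theorem BMO linear A only} only supplies $\nabla v \in \mathrm{BMO}$. The John--Nirenberg inequality gives $\nabla v \in L^{p}(\Omega)$ for every $1 \leq p < \infty$ with
\begin{align*}
\|\nabla v\|_{L^{p}} \leq C_{p}\bigl(\|\nabla v\|_{L^{2}} + [\nabla v]_{\mathrm{BMO}}\bigr),
\end{align*}
and combining this with $d^{\ast}v = 0$, $\nu\wedge v = 0$, $v \perp \mathcal{H}^{k}_{T}$, and the $L^{p}$-analogue of Proposition~\ref{poincare gaffney} (a standard consequence of the $L^{p}$-Gaffney inequality on $C^{2}$ domains) yields $v \in W^{1,p}(\Omega)$ for every finite $p$; taking $p > n$ and invoking Morrey's embedding gives $v \in L^{\infty}(\Omega) \subset \mathrm{BMO}(\Omega)$ with norm controlled by $\|\tilde F\|_{L^{2}} + [\tilde F]_{\mathrm{BMO}}$. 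Assembling the two applications of Theorem~\ref{Main theorem BMO linear A only} with the multiplier inequalities $[A\,du_{0}]_{\mathrm{BMO}} \leq C[du_{0}]_{\mathrm{BMO}}$, $[Bu_{0}]_{\mathrm{BMO}} \leq C[u_{0}]_{\mathrm{BMO}}$, $[Bv]_{\mathrm{BMO}} \leq C[v]_{\mathrm{BMO}}$ and the $L^{\infty}$ bound on $v$ then yields \eqref{du bmo estimate A B} for $w$, and hence for $u = u_{0}+w$; uniqueness when $\mathcal{H}^{k}_{T}(\Omega;\varLambda^{k}) = \{0\}$ is precisely Lemma~\ref{uniqueness lemma}.
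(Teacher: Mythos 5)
Your proof follows the same two-stage construction as the paper: solve the $A$-system first (your $v$, the paper's $\alpha$), then solve a $B$-corrector system on $(k-1)$-forms (your $\psi$, the paper's $\beta$), and assemble $u = u_{0} + v + d\psi$. The only addition is that you explicitly verify $v \in \mathrm{BMO}(\Omega)$ via John--Nirenberg and a $W^{1,p}$ embedding for $p>n$, a step the paper leaves implicit when applying Theorem~\ref{Main theorem BMO linear A only} a second time with $B\alpha$ on the right-hand side.
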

	\begin{remark}
		If $\mathcal{H}^{k}_{T} \neq \left\lbrace 0 \right\rbrace,$ then the set of weak solution $v \in W^{1,2}_{d^{\ast}, T}\left( \Omega ; \varLambda^{k}\right)$ of \eqref{Main AB} with zero data is in one to one correspondence with the set of harmonic fields $h \in \mathcal{H}^{k}_{T}$ ( see Lemma \ref{uniqueness lemma} ) and all weak solutions satisfy the estimate \eqref{du bmo estimate A B}. Also, if $\mathcal{H}^{k}_{T} = \left\lbrace 0 \right\rbrace,$ then the $L^{2}$ norm of $u$ can be dropped from the RHS of the estimate \eqref{du bmo estimate A B} in view of uniqueness.  
	\end{remark}
	\begin{proof}
		First we find $\alpha \in W^{1,2}_{d^{\ast}, T}\left( \Omega ; \varLambda^{k}\right) \cap \left( \mathcal{H}^{k}_{T}\right)^{\perp}$, a weak solution to
		\begin{align*}
			\left\lbrace \begin{aligned}
				d^{\ast} \left( A\left( x \right) d\alpha \right)  &= d^{\ast} F - d^{\ast}\left( A \left(x\right) du_{0}\right)&&\text{ in } \Omega, \\
				d^{\ast} \alpha &= 0 &&\text{  in } \Omega, \\
				\nu\wedge \alpha &= 0 &&\text{ on } \partial\Omega.
			\end{aligned}	\right. 
		\end{align*}
		Exactly as before, Theorem \ref{Main theorem BMO linear A only} implies $\nabla \alpha \in \mathrm{BMO}$ with corresponding estimates. 
		Now we find $\beta \in W^{1,2}_{d^{\ast}, T}\left( \Omega ; \varLambda^{k}\right)\cap \left( \mathcal{H}^{k}_{T}\right)^{\perp},$ a weak solution to
		\begin{align*}
			\left\lbrace \begin{aligned}
				d^{\ast} \left( B\left( x \right) d\beta \right)  &= d^{\ast} G - d^{\ast}\left( B \left(x\right) \alpha\right) - d^{\ast}\left( B \left(x\right) u_{0} \right) &&\text{ in } \Omega, \\
				d^{\ast} \beta &= 0 &&\text{  in } \Omega, \\
				\nu\wedge \beta &= 0 &&\text{ on } \partial\Omega.
			\end{aligned}	\right. 
		\end{align*} 
		Again, Theorem \ref{Main theorem BMO linear A only} implies $\nabla \beta \in \mathrm{BMO}$ with corresponding estimates. Since $\nu\wedge \beta = 0 $ on $\partial\Omega$ implies $\nu\wedge d\beta = 0 $ 
		on $\partial\Omega,$ it is easy to verify $u = \alpha + d\beta + u_{0}$ solves \eqref{Main AB}, $u \in \mathrm{BMO}$ and $du \in \mathrm{BMO}$ along with the estimates. 
	\end{proof}
	As a consequence, we have the following regularity result for the general Hodge-maxwell system. 
	\begin{theorem}\label{Main theorem Maxwell BMO linear AB}
		Let $\Omega \subset \mathbb{R}^{n}$ be  open, bounded with $\partial \Omega \in C^{2}.$   Let $A \in L^{\infty}\cap \mathrm{V}\mathscr{L}_{\left( 1+ \left\lvert \log r \right\rvert \right)^{-1}}^{2,n}\left( \Omega; \mathcal{L}\left( \varLambda^{k+1}; \varLambda^{k+1}\right)\right),$ $B \in L^{\infty}\cap \mathrm{V}\mathscr{L}_{\left( 1+ \left\lvert \log r \right\rvert \right)^{-1}}^{2,n}\left( \Omega; \mathcal{L}\left( \varLambda^{k}; \varLambda^{k}\right)\right)$ be uniformly Legendre-elliptic with ellipticity constants $\gamma_{A}, \gamma_{B} >0$, respectively. Let $\lambda \geq 0,$ $F \in \mathrm{BMO}\left( \Omega; \varLambda^{k+1}\right)$ and $G \in \mathrm{BMO}\left( \Omega; \varLambda^{k}\right).$ Let $u_{0}\in L^{2}\left( \Omega ; \varLambda^{k}\right)$ be such that $du_{0} \in L^{2}\left( \Omega ; \varLambda^{k+1}\right)$, $u_{0} \in \mathrm{BMO}\left( \Omega; \varLambda^{k}\right)$ and $du_{0} \in \mathrm{BMO}\left( \Omega; \varLambda^{k+1}\right).$
		If there exists $u \in L^{2}\left( \Omega ; \varLambda^{k}\right)$ with $du \in  L^{2}\left( \Omega ; \varLambda^{k+1}\right)$ such that $u$ is a weak solution for the following 
		\begin{align}\label{Main Maxwell AB}
			\left\lbrace \begin{aligned}
				d^{\ast} \left( A\left( x \right) du \right)  &= \lambda B\left(x\right) u  - \lambda G + d^{\ast} F &&\text{ in } \Omega, \\
				d^{\ast} \left( B\left(x\right) u\right) &= d^{\ast}G &&\text{  in } \Omega, \\
				\nu\wedge u &= \nu \wedge u_{0} &&\text{ on } \partial\Omega,
			\end{aligned}	\right. 
		\end{align}
		then $u \in \mathrm{BMO} \left( \Omega; \varLambda^{k}\right)$ and $d u \in \mathrm{BMO} \left( \Omega,; \varLambda^{k + 1}\right)$ and there exists a constant $C>0,$ depending only on 
		\begin{align*}
			n, k, N, \lambda, \gamma_{A},\gamma_{B}, \Omega, \Theta^{A},\Theta^{B}, \left\lVert A\right\rVert_{L^{\infty}},\left\lVert B\right\rVert_{L^{\infty}}, \left[ A \right]_{\mathscr{L}^{2,n}_{\frac{1}{1+ \left\lvert \log r \right\rvert}}}, \left[ B \right]_{\mathscr{L}^{2,n}_{\frac{1}{1+ \left\lvert \log r \right\rvert}}}, 
		\end{align*} such that we have the estimates
		\begin{multline}\label{du bmo estimate Maxwell A B}
			\left[ u\right]_{\mathrm{BMO}\left(\Omega\right)} + \left[ du\right]_{\mathrm{BMO}\left(\Omega\right)}  \\\leq   C \left(  \left\lVert u \right\rVert_{L^{2}\left(\Omega\right)} + \left[ \left( F, G, u_{0}, du_{0} \right) \right]_{\mathrm{BMO}\left( \Omega\right)} + \left\lVert \left( F, G, u_{0}, du_{0} \right) \right\rVert_{L^{2}\left(\Omega\right)}\right) .
		\end{multline}
	\end{theorem}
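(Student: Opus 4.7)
The strategy is to reduce Theorem \ref{Main theorem Maxwell BMO linear AB} to Theorem \ref{Main theorem BMO linear AB} by rewriting the zeroth order term $\lambda B(x) u - \lambda G$ on the right hand side of the first equation of \eqref{Main Maxwell AB} in the form $d^\ast \Phi$ for some $\Phi \in \mathrm{BMO}.$ The crucial observation is that the second equation of \eqref{Main Maxwell AB} forces $w := \lambda\left[ B(x) u - G\right]$ to satisfy $d^\ast w = 0$ in $\Omega,$ i.e.\ $w$ is coclosed in $L^2.$

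Given such a $w,$ the construction of $\Phi$ proceeds via the Hodge Laplace type auxiliary problem: find $\eta \in W^{1,2}_{d^\ast, T}\left(\Omega; \varLambda^{k}\right) \cap \left(\mathcal{H}^{k}_{T}\right)^\perp$ solving
\begin{align*}
\left\lbrace \begin{aligned}
d^\ast d\eta &= w &&\text{in } \Omega, \\
d^\ast \eta &= 0 &&\text{in } \Omega, \\
\nu \wedge \eta &= 0 &&\text{on } \partial\Omega,
\end{aligned}\right.
\end{align*}
(modulo projection of $w$ onto $\mathcal{H}^{k}_{T},$ which contributes a finite dimensional smooth correction handled by splitting off a smooth piece of $u$ and applying Theorem \ref{Main theorem BMO linear AB} separately), and then setting $\Phi := d\eta.$ Standard $L^p$ elliptic regularity for this mixed problem gives $\eta \in W^{2, p}$ whenever $w \in L^{p},$ so for any $p > n$ one obtains $\Phi \in W^{1, p} \hookrightarrow L^{\infty} \hookrightarrow \mathrm{BMO},$ with $\left[ \Phi \right]_{\mathrm{BMO}}$ controlled linearly by $\|w\|_{L^p}.$

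Accordingly, the plan is to first bootstrap $u$ from $L^2$ to $L^p$ for some $p > n.$ Starting with $u \in L^2,$ one constructs a preliminary $\Phi_0 \in W^{1,2} \hookrightarrow L^{2n/(n-2)}$ as above, rewrites the first equation as $d^\ast(A(x) du) = d^\ast(F + \Phi_0)$ (plus smooth harmonic correction), and invokes the $L^{q}$ analog of Theorem \ref{Main theorem BMO linear AB} for general $B$ with $q = 2n/(n-2).$ This analog is not stated explicitly in the excerpt but follows from the $\mathrm{BMO}$ and $L^2$ estimates via the same Stampacchia interpolation plus duality scheme used to deduce Theorem \ref{Main theorem Lp linear A u0} from its $\mathrm{BMO}$ predecessor. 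The resulting $u \in W^{1, q}$ embeds into $L^{q^\ast},$ and iterating finitely many times yields $u \in L^{p}$ for any $p < \infty,$ with norm controlled by $\|u\|_{L^2}$ and the data norms.

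With $u \in L^p$ for some $p > n$ in hand, the Hodge Laplace construction produces $\Phi \in \mathrm{BMO}$ with $\left[ \Phi \right]_{\mathrm{BMO}}$ controlled by $\|u\|_{L^2}$ and the data, and Theorem \ref{Main theorem BMO linear AB} applied to the rewritten system
\begin{align*}
\left\lbrace \begin{aligned}
d^\ast(A(x) du) &= d^\ast(F + \Phi) &&\text{in } \Omega, \\
d^\ast(B(x) u) &= d^\ast G &&\text{in } \Omega, \\
\nu \wedge u &= \nu \wedge u_0 &&\text{on } \partial\Omega,
\end{aligned}\right.
\end{align*}
delivers the desired estimate \eqref{du bmo estimate Maxwell A B}. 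The main obstacle is the $L^p$ bootstrap itself: each iteration must preserve uniform control of the constants, and one needs the $L^p$ version of Theorem \ref{Main theorem BMO linear AB} for general $B,$ which requires verifying that the Stampacchia-plus-duality scheme goes through for the full Hodge-Maxwell system rather than only for the $B = \mathrm{Id}$ case addressed by Theorem \ref{Main theorem Lp linear A u0}. The harmonic field correction is a secondary technicality, finite dimensional and smooth, and is absorbed cleanly at each stage.
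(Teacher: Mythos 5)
Your overall strategy matches the paper's: observe that $w := \lambda\left[ B(x)u - G\right]$ is coclosed, bootstrap $u$ from $L^{2}$ to $L^{p}$ with $p>n$, write $w = d^{\ast}\Phi$ with $\Phi$ in $\mathrm{BMO},$ and close with Theorem~\ref{Main theorem BMO linear AB}. The realization of the bootstrap, however, diverges in a way that matters. The paper first Hodge-decomposes $u - u_{0} = d\alpha + d^{\ast}\beta + h$ and then feeds the pieces $\alpha$ (with $d^{\ast}\alpha = 0$) and $d^{\ast}\beta$ into Theorem~\ref{Main theorem Lp linear A u0}, i.e.\ the $L^{p}$ result \emph{with $B = \mathrm{Id}$}; the general $B$ never enters the $L^{p}$ machinery. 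You instead want to apply an $L^{q}$ analogue of Theorem~\ref{Main theorem BMO linear AB} with general $B$ directly to $u.$ That analogue is not in the paper, and although you are right that it is plausibly obtainable by running the Stampacchia interpolation and duality argument through the $\alpha$--$\beta$ decomposition in the proof of Theorem~\ref{Main theorem BMO linear AB}, this amounts to re-proving the paper's Hodge-decomposition reduction in disguise, with added bookkeeping. The paper's route is more economical precisely because it never needs the $L^{p}$ version of the general $B$ system; you should either supply the missing $L^{p}$ theorem or adopt the decomposition directly.

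There is also a genuine error in your handling of the harmonic part. You say the projection of $w$ onto $\mathcal{H}^{k}_{T}$ yields a ``finite dimensional smooth correction handled by splitting off a smooth piece of $u$.'' In fact this projection vanishes identically: testing the weak form of the first equation of \eqref{Main Maxwell AB} against any $h \in \mathcal{H}^{k}_{T}$ kills both $\left\langle A du, dh\right\rangle$ and $\left\langle F, dh\right\rangle$ because $dh = 0,$ forcing $\int_{\Omega}\left\langle \lambda B u - \lambda G, h\right\rangle = 0,$ i.e.\ $w \perp \mathcal{H}^{k}_{T}.$ This orthogonality is exactly what makes the div--curl system $d^{\ast}\phi = w,\ d\phi = 0,\ \nu\wedge\phi = 0$ solvable (the paper's Theorem 14 of \cite{Sil_linearregularity}), so no correction is needed. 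Had the projection been nonzero, your proposed fix would not work: a nontrivial harmonic field on the right-hand side is neither of the form $d^{\ast}(\cdot)$ nor removable by subtracting a ``smooth piece'' of $u$ without also contaminating the second equation, so it could not be absorbed into the framework of Theorem~\ref{Main theorem BMO linear AB}. Finally, your construction of $\Phi = d\eta$ via the mixed Hodge Laplace problem and the paper's direct div--curl construction of $\phi$ are essentially interchangeable; this part of your argument is fine.
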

	\begin{proof}
		Since $u, u_{0} \in L^{2}\left( \Omega ; \varLambda^{k}\right),$ by standard Hodge decomposition theorem, we can write 
		\begin{align*}
			u -u_{0} = d\alpha + d^{\ast}\beta + h ,
		\end{align*}
		where $\alpha \in  W_{T}^{1,2}\left(  \Omega;\varLambda^{k-1}\right),  $ $\beta \in  W_{T}^{1,2}\left(  \Omega;\varLambda^{k+1}\right),  $ $h \in \mathcal{H}^{k}_{T}\left(  \Omega;\varLambda^{k}\right)$ and  
		\begin{align*}
			d^{\ast}\alpha = 0  \quad \text{ and }\quad  d\beta = 0 \qquad \text{ in  } \Omega. 
		\end{align*}
		Now since $\nu \wedge u - u_{0} = 0$ on $\partial\Omega,$ we have $\nu \wedge d^{\ast}\beta = 0$ on $\partial \Omega.$
		As $du, du_{0} \in L^{2}$ and $\beta \in W_{T}^{1,2}\left(  \Omega;\varLambda^{k+1}\right)$ is a weak solution to 
		\begin{align*}
			\left\lbrace \begin{aligned}
				dd^{\ast}\beta &= du - du_{0} &&\text{ in } \Omega, \\
				d\beta &= 0 &&\text{ in } \Omega, \\
				\nu \wedge \beta &=0 &&\text{ on } \partial\Omega, \\
				\nu \wedge d^{\ast}\beta &=0 &&\text{ on } \partial\Omega, 
			\end{aligned}\right. 
		\end{align*}
		we deduce $\beta \in  W^{2,2}\left(  \Omega;\varLambda^{k+1}\right)$ ( see Theorem 10 in \cite{Sil_linearregularity}, our system is the Hodge dual ). Now, we see that $\alpha \in  W_{T}^{1,2}\left(  \Omega;\varLambda^{k-1}\right)$ satisfies 
		\begin{align*}
			\left\lbrace \begin{aligned}
				d^{\ast} \left( B\left(x\right) d\alpha\right) &= d^{\ast}G - 	d^{\ast} \left( B\left(x\right) \left[ u_{0} +h + d^{\ast}\beta \right]\right)   &&\text{ in } \Omega, \\
				d^{\ast}\alpha &= 0 &&\text{ in } \Omega, \\
				\nu \wedge \alpha &=0 &&\text{ on } \partial\Omega. \\
			\end{aligned}\right. 
		\end{align*}
		As $d^{\ast}\beta \in W^{1,2} \hookrightarrow L^{\frac{2n}{n-2}},$ Theorem \ref{Main theorem Lp linear A u0} implies $\alpha \in W^{1, \frac{2n}{n-2}}.$ This implies $u \in L^{\frac{2n}{n-2}}.$ But $\psi = d^{\ast}\beta$ is a weak solution to 
		\begin{align*}
			\left\lbrace \begin{aligned}
				d^{\ast} \left( A\left( x \right) d\psi \right)  &= \lambda B\left(x\right) u  - \lambda G + d^{\ast} F &&\text{ in } \Omega, \\
				d^{\ast} \psi &= d^{\ast}G &&\text{  in } \Omega, \\
				\nu\wedge \psi &= 0 &&\text{ on } \partial\Omega.
			\end{aligned}	\right. 
		\end{align*}
		Thus, $\psi \in W^{1, \frac{2n}{n-2}}.$ This implies $du \in L^{\frac{2n}{n-2}}.$ Repeating the arguments finitely many times, we deduce that $u \in L^{q},$ where $q<n$ is such that $\frac{nq}{n-q} >n.$ Hence, by Theorem 14 in \cite{Sil_linearregularity}, we can find $\phi \in W^{1,q}$ such that 
		\begin{align*}
			\left\lbrace \begin{aligned}
				d^{\ast}\phi &= \lambda B\left(x\right)u - \lambda G    &&\text{ in } \Omega, \\
				d\phi &= 0 &&\text{ in } \Omega, \\
				\nu \wedge \phi &=0 &&\text{ on } \partial\Omega. \\
			\end{aligned}\right. 
		\end{align*}
		Hence, $\phi \in \mathrm{BMO}$ and from \eqref{Main Maxwell AB}, we have 
		\begin{align*}
			\left\lbrace \begin{aligned}
				d^{\ast} \left( A\left( x \right) du \right)  &= d^{\ast}\phi + d^{\ast} F &&\text{ in } \Omega, \\
				d^{\ast} \left( B\left(x\right) u\right) &= d^{\ast}G &&\text{  in } \Omega, \\
				\nu\wedge u &= \nu \wedge u_{0} &&\text{ on } \partial\Omega.
			\end{aligned}	\right. 
		\end{align*}
		Now applying Theorem \ref{Main theorem BMO linear AB} completes the proof. 
	\end{proof}
	
	\begin{proof}[\textbf{Proof of Theorem \ref{BMO for Maxwell in 3dim}}]
		By eliminating $H = \frac{i}{\omega}\left[ \mu^{-1}\operatorname{curl}E - \mu^{-1}J_{m} \right],$ we can rewrite the Maxwell system  as the second order system 
		\begin{align*}
			\left\lbrace \begin{aligned}
				\operatorname*{curl} ( \mu^{-1} \operatorname*{curl} E  ) &=  \omega^2 \varepsilon E -i\omega J_{e} + \operatorname*{curl}\left( \mu^{-1} J_{m}\right)    
				&&\text{ in } \Omega, \\
				\operatorname*{div} ( \varepsilon E ) &= \frac{i}{\omega}\operatorname*{div} J_{e} &&\text{ in } \Omega, \\
				\nu \times E &= \nu \times E_{0} &&\text{  on } \partial\Omega,
			\end{aligned} 		\right. 
		\end{align*}
		By Remark \ref{Remark about Mean Oscillation space properties}(iii), $\mu^{-1}J_{m} \in \mathrm{BMO}.$ 	Using Lemma \ref{inverse also elliptic and same space}, the result follows from Theorem \ref{Main theorem Maxwell BMO linear AB} by taking $A = \mu^{-1}$ and $B= \varepsilon$.
	\end{proof}

\end{document}